\documentclass[12pt]{amsart}
\usepackage{amssymb,latexsym,mathtools}
\usepackage[margin=1.25in]{geometry}
\usepackage{mathrsfs}
\usepackage{mathtools}
\usepackage{graphicx}
\usepackage{esint}
\usepackage{braket}
\usepackage{url, hyperref}
\usepackage{tikz}
\usetikzlibrary{matrix,arrows.meta}

\newtheorem{theorem}{Theorem}
\newtheorem{lemma}{Lemma}

\newtheorem{prop}{Proposition}

\newtheorem{remark}{Remark}
\newtheorem{corollary}{Corollary}

\newcommand{\R}{\mathbb{R}}

\newcommand{\N}{\mathbb{N}}

\newcommand{\E}{\mathbb{E}}
\renewcommand{\P}{\mathbb{P}}

\makeatletter
\pgfmathdeclarefunction{erf}{1}{%
  \begingroup
    \pgfmathparse{#1 > 0 ? 1 : -1}%
    \edef\sign{\pgfmathresult}%
    \pgfmathparse{abs(#1)}%
    \edef\x{\pgfmathresult}%
    \pgfmathparse{1 / (1 + 0.3275911 * \x)}%
    \edef\t{\pgfmathresult}%
    \pgfmathparse{%
      1 - ( ( ( ( 1.061405429 * \t - 1.453152027 ) * \t + 1.421413741 ) * \t - 0.284496736 ) * \t + 0.254829592 ) * \t * exp( - \x * \x)}%
     \edef\y{\pgfmathresult}%
    \pgfmathparse{(\sign)*\y}%
    \pgfmath@smuggleone\pgfmathresult%
  \endgroup
}
\makeatother

\begin{document}

\title[Phase Transitions with Microscopic Heterogeneities]{Diffuse Interface Energies with Microscopic Heterogeneities II: Rare Events}
\author[P.S.\ Morfe and C.\ Wagner]{Peter S.\ Morfe and Christian Wagner}
\address{%
  Pennsylvania State University, 219B McAllister Building, University Park, State College, PA 16802
}
\email{pmorfe@psu.edu}
\address{%
	Institute for Science and Technology Austria (ISTA), Am Campus 1, 3400 {Kloster\-neu\-burg}, Austria
}
\email{christian.wagner@ist.ac.at}

\begin{abstract} We analyze Allen-Cahn functionals with stationary ergodic coefficients in the regime where the length scale $\delta$ of the heterogeneities is much smaller (microscopic) than the interface width $\epsilon$ (mesoscopic).  In a companion paper, we show that if the ratio $\epsilon^{-1} \delta$ vanishes fast enough as $\epsilon \to 0$, then the functionals converge to an effective surface energy where the energy density is determined by homogenization effects originating at microscopic scales.  Here we prove that if the ratio $\epsilon^{-1} \delta $ vanishes too slowly, the limit of the functional may actually be smaller than this homogenized energy. We refer to this as the rare events regime.  

In the case of the random checkerboard in dimension one, we use large deviations techniques to give a complete description of the rare events regime,  showing that the limiting energy depends in a nontrivial way on the limit of $\epsilon^{-1} \delta | \log \epsilon |$.  We further construct, in any dimension, examples of random media in which rare events become relevant at algebraic scales $\delta \approx \epsilon^{1 + \alpha}$ for an arbitrary $\alpha > 0$, as well as almost periodic examples in which atypical configurations play the same role as rare events.
\end{abstract}

\date{\today}

\maketitle

\section{Introduction} In this paper, which is the second part of \cite{part1}, we analyze diffuse interface functionals with heterogeneous coefficients in the regime where the diffuse interface length scale $\epsilon$ is much larger than the characteristic length scale $\delta$ of the heterogeneities.  Specifically, we are interested in energy functionals of the form
	\begin{equation} \label{E: our energy}
		\mathscr{F}_{\epsilon,\delta}(u; U) = \int_{U} \left( \frac{\epsilon}{2} a(\delta^{-1} x) \nabla u \cdot \nabla u + \frac{1}{\epsilon} \theta(\delta^{-1}x) W(u) \right) \, dx,
	\end{equation}
where the coefficients $a$ and $\theta$ are samples of stationary, ergodic fields (for instance, these could be random, periodic, or almost periodic) and we assume there are positive constants $\lambda \leq \Lambda$ and $\theta_{*} \leq \theta^{*}$ such that $\theta_{*} \leq \theta \leq \theta^{*}$ and $\lambda \text{Id} \leq a \leq \Lambda \text{Id}$ pointwise.  For concreteness, we assume that the double-well $W$ is a nonnegative continuous function with zeros precisely at $-1$ and $1$.  We consider the asymptotics when $(\delta,\epsilon ) \to 0$ in such a way that $\epsilon^{-1} \delta \to 0$.  Our interest is in the limiting behavior of minimizers of $\mathscr{F}_{\epsilon,\delta}$ (possibly with boundary conditions or external forces), hence we consider this problem through the lens of $\Gamma$-convergence.  Whereas the companion paper \cite{part1} considers the regime in which homogenization (or averaging) effects dominate, what we call the \emph{homogenization regime}, the aim of this paper is to investigate the regime in which atypical configurations of the medium $(a,\theta)$ contribute to the limiting behavior, or what we refer to as the \emph{rare events regime}.

Energies of the form \eqref{E: our energy} go back at least as far as the work of van der Waals \cite{van-der-waals,rowlinson_translation}, who considered them as a model to explain the formation of capillary surfaces in fluids, and also arise in phenomenological theories of interface formation in materials science (see, e.g., \cite[Chapter 8]{glicksman2010principles}).  If the coefficients $a$ and $\theta$ are constant, say, $a \equiv \bar{a}$ and $\theta \equiv \bar{\theta}$, as in a homogeneous medium, then, by a classical result of Modica and Mortola \cite{modica_mortola, modica}, the energy $\mathscr{F}_{\epsilon,\delta}$ $\Gamma$-converges as $\epsilon \downarrow 0$ to the surface energy $\bar{ \mathscr{E} }$ given by
	\begin{align}
		\bar{ \mathscr{ E } }(u ; U ) &= \left\{ \begin{array}{r l}
			\int_{ U \cap \partial \{ u = 1 \} } \bar{\sigma} ( n_{\partial \{ u = 1 \} } ) \, d \mathcal{H}^{d-1}, & \text{if} \, \, u \in BV_{\text{loc}}(U; \{-1,1\}),  \\
			 \infty, & \text{otherwise,}
		\end{array} \right. \label{E: homogenized energy} \\ 
		&\text{where} \, \, \bar{\sigma}(n)^{2} = \sigma_{W}^{2} \bar{\theta}  n \cdot \bar{a} n  \quad \text{and} \quad \sigma_{W} = \int_{-1}^{1} \sqrt{ W(u) } \, du. \nonumber
	\end{align}
(See, for instance, \cite{AlbertiLectureNotes} for an expository discussion of this theorem.)

In this article and the companion paper \cite{part1}, our interest is in the extent to which this qualitative picture changes when $a$ and $\theta$ are nonconstant, but instead vary in space in a stationary and ergodic manner, as in a heterogeneous material. More broadly, this problem fits within the larger mathematical investigation of homogenization for geometric PDE's and variational problems; see the survey \cite{caffarelli} for an overview.  As mentioned already above, we focus on the setting in which the length scale $\delta$ of the heterogeneities is much smaller than the typical width $\epsilon$ of the interfacial region.

Heuristically, if $\epsilon^{-1} \delta \to 0$ as $\epsilon \to 0$, it is natural to expect that homogenization at the smallest scale $\delta$ determines what happens at the interface length scale $\epsilon$, and, thus, the limit of $\mathscr{F}_{\epsilon,\delta}$ ought to be the same as what would be obtained upon first sending $\delta \to 0$ and then sending $\epsilon \to 0$.  By classical homogenization theory, this is precisely the energy $\bar{\mathscr{E}}$ above provided we let $\bar{\theta} = \mathbb{E} [ \theta(0) ]$ be the ensemble average of $\theta$ and $ \bar{a} $ be the homogenized matrix associated with the operator $\nabla \cdot ( a(y) \nabla )$.

Until recently, $\bar{\mathscr{E}}$ was known to be the limit of $\mathscr{F}_{\epsilon,\delta}$ when $\epsilon , \delta , \epsilon^{-1} \delta \to 0$ only in a few cases, almost exclusively in the setting when $a$ and $\theta$ are periodic.  First, it was shown by Ansini, Braides, and Chiad\`{o}-Piat in \cite{ansini_braides_chiado-piat} when $a$ is a periodic function, $\theta$ is constant, and under the stricter assumption that $\epsilon^{-3/2} \delta \to 0$.  More recently, Cristoferi, Ganedi, and Fonseca \cite{cristoferi_fonseca_ganedi_supercritical} proved it in case $a$ is constant and $\theta$ is periodic, and their work was revisited by Wojtowtysch \cite{wojtowytsch}, who also proved convergence to $ \bar{\mathscr{E}}$ when $\theta$ is the so-called random checkerboard and $ \epsilon^{ - ( 1 + \frac{ 2 }{ d } ) } \delta \to 0 $. 

In the companion paper \cite{part1}, we prove that, in the generality of stationary ergodic coefficients, $\mathscr{F}_{\epsilon,\delta}$ converges to $\bar{\mathscr{E}}$ provided $\epsilon^{-1} \delta \to 0$ sufficiently fast. In that result, the speed is quantified in terms of rates of homogenization, namely, the probability that $\theta(\delta^{-1} x)$ deviates from its average $\mathbb{E} [ \theta (0) ]$ and $( \nabla \cdot a(\delta^{-1} x) \nabla )^{-1}$ from the homogenized operator  $( \nabla \cdot \bar{a} \nabla )^{-1}$ on scales of order $ \epsilon $. We refer to this setting as the homogenization regime.

Our aim in this article is to complement the results of \cite{part1} by showing that some assumption on the rate of convergence of $\epsilon^{-1} \delta$ is necessary: There are coefficients $a$ and $\theta$  such that if $\epsilon^{-1} \delta \to 0$ too slowly, then $\mathscr{F}_{\epsilon,\delta}$ will not converge to $\bar{ \mathscr{E} }$.  We make this precise using the notion of surface tension.  

For an arbitrary unit vector $n \in S^{d-1}$ denote by $Q^{ n } $ a cube with two faces orthonormal $ n $. Given an arbitrary smooth function $ q : \mathbb{R} \to [-1,1]$ with $q(\infty) = 1$ and $q(-\infty) = -1$, we define the finite-volume surface tension in the direction $n$ in the box $ Q^{ n } ( x ) \coloneqq x + Q^{n} $ by
	\begin{align}\label{eqn:surface-tension}
	\begin{aligned}
		& \sigma_{\epsilon,\delta}( n ; Q^{n} ( x ) ) \\ &\qquad = \min \left\{ \mathscr{F}_{\epsilon,\delta}(u; Q^{n} ( x ) )\, \mid \, u(x') = q( \epsilon^{-1} ( x' - x) \cdot n ) \, \, \text{for} \, \, x' \in \partial Q^{n}\right\}. \\
	\end{aligned}
	\end{align}
Simply put, this measures the minimal energy required to form a planar interface with normal $n$ in the box $ Q^{n} ( x ) $.

It is well-known that the limit of the finite-volume surface tension determines that of $\mathscr{F}_{\epsilon,\delta}$ (see \cite{ansini_braides_chiado-piat,morfe}).  In \cite[Theorem 2]{part1}, we prove that, for any scaling $\epsilon \mapsto \delta(\epsilon)$ such that $\epsilon^{-1} \delta(\epsilon) \to 0$ as $\epsilon \downarrow 0$, 
	\begin{align} \label{E: what we want to prove}
		\limsup_{\epsilon \downarrow 0} \sigma_{\epsilon,\delta(\epsilon)} ( n ; Q^{n}(x) ) \leq \bar{\sigma}(n) | Q^{n} |^{ \frac{ d - 1 }{ d } } ,
	\end{align} 
where $ | Q^{n} |$ denotes the volume of $Q^{n}$.  While in the homogenization regime considered in \cite{part1} equality holds in \eqref{E: what we want to prove}, in the present paper, we give examples where the inequality is strict.  As the proofs given here show, this reflects a competition between averaging and energy minimization: If $\delta$ is only moderately small compared to $\epsilon$, minimizing interfaces can take advantage of atypical configurations of the medium $(a,\theta)$ to drag their energy below the mean.  In contrast to the homogenization regime, we refer to this as the rare events regime as, in the one-dimensional setting, there is a close connection between this phenomenon and large deviations.

For further references and discussion of previous results, we refer the reader to \cite{part1}.

\subsection{Analysis of the 1D Random Checkerboard} Our first result concerns the random checkerboard in dimension $d = 1$, where we sharply determine the threshold between the homogenization and rare events regimes.  By the random checkerboard, we mean the random fields $(a,\theta)$ determined by the formula
	\begin{equation} \label{E: one d random checkerboard}
		(a(x),\theta(x)) = \sum_{z \in \mathbb{Z}^{d}} (A_{z},\Theta_{z}) \boldsymbol{1}_{ z + (-1/2,1/2)^{d} }(x)
	\end{equation}  
where $\{ (A_{z},\Theta_{z} ) \}_{z \in \mathbb{Z}^{d}}$ is a sequence of i.i.d.\ random variables such that $\lambda \text{Id} \leq A_{z} \leq \Lambda \text{Id}$ as symmetric matrices and $\theta_{*} \leq \Theta_{z} \leq \theta^{*}$ for some positive $\lambda \leq \Lambda$ and $ \theta_{*} \leq \theta^{*}$.

The results of \cite[Theorem 1 \& Sections 4.6--4.7]{part1} establish that, for the random checkerboard, $\mathscr{F}_{\epsilon,\delta}$ converges to $\bar{\mathscr{E}}$ if 
	\begin{align}\label{E: assumption homogenization regime}
		\lim_{ \epsilon \downarrow 0 } \frac{ \delta ( \epsilon ) | \log \epsilon |^{1/d} }{ \epsilon } = 0.
	\end{align}
In particular, in that case, equality holds in \eqref{E: what we want to prove}. We expect that the logarithm can be removed in dimensions $d \geq 2$; that question will be revisited in future work. By contrast, the next result,  shows that the logarithm is actually sharp in dimension $d = 1$; see Figure \ref{fig01} for an illustration.  For technical reasons discussed below, we restrict to the case when $a$ is constant.

	\begin{theorem} \label{T: main deviations} Consider the random checkerboard $(a,\theta)$ in dimension $d = 1$ in the special case when $a \equiv 1$.  Assume that the random variables $\{\Theta_{z}\}_{z \in \mathbb{Z} }$ are nonconstant with essential infimum $\theta_{*}$, and let $\sigma_{*} = \sigma_{W} \sqrt{ \theta_{*} }$, where $\sigma_{W}$ is as in \eqref{E: homogenized energy}.  
    
    There is a nonincreasing continuous function $\check{\sigma} : [0,\infty] \to [\sigma_{*},\bar{\sigma}(1)]$ such that if $\epsilon \mapsto \delta(\epsilon)$ is any choice of scaling such that $\epsilon^{-1} \delta(\epsilon) \to 0$ as $\epsilon \to 0$ and for which there is a constant $\kappa \in [0,\infty]$ such that 
		\begin{align} \label{E: log condition}
			\lim_{\epsilon \downarrow 0} \frac{ \delta(\epsilon) | \log \epsilon | }{ \epsilon } = \kappa,
		\end{align}
	then, for any bounded interval $I$,
		\begin{align*}
			\lim_{\epsilon \downarrow 0} \sigma_{\epsilon,\delta(\epsilon)}(1; I ) = \lim_{ \epsilon \downarrow 0 } \sigma_{\epsilon,\delta(\epsilon)} (-1; I ) = \check{\sigma}(\kappa) \quad \text{in probability.}
		\end{align*}
    Further, $\check{\sigma}(0) = \bar{\sigma}(1)$, $\check{\sigma}(\kappa) < \bar{\sigma}(1)$ for any $\kappa > 0$, and $\check{\sigma}(\infty) = \sigma_{*}$. 
	\end{theorem}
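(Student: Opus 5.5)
The plan is to rescale so that the interface has unit width and the medium has period $\eta = \epsilon^{-1}\delta \to 0$. Set $v(t) = u(x_0 + \epsilon t)$. In one dimension with $a \equiv 1$ the energy becomes
\begin{equation*}
\int \left( \tfrac12 v'^2 + \theta(\eta^{-1} t) W(v) \right) dt
\end{equation*}
over an interval of length $|I|/\epsilon$. The boundary data is a transition from $-1$ to $1$ in the middle of this interval.

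The guiding heuristic is that the minimizer places a unit-size transition layer wherever the medium is most favorable. The window $[s, s+L]$ then contains about $L/\eta$ i.i.d.\ cells. There are roughly $\epsilon^{-1}$ candidate positions, and the cheapest position is governed by large deviations of the empirical measure of $L/\eta$ cells. Such a deviation costs probability $\exp(-(L/\eta) H)$, where $H$ is a relative entropy. It is available among $\epsilon^{-1}$ independent windows exactly when $(L/\eta) H \lesssim |\log \epsilon|$, i.e.\ $H \lesssim \kappa / L$ per unit length, since $\eta |\log\epsilon| \to \kappa$.

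This suggests the definition
\begin{equation*}
\check\sigma(\kappa) = \inf\Big\{ \mathcal{E}(\nu) \,:\, \nu \in \mathcal{P}\big(L^\infty(\R;[\theta_*,\theta^*])\big) \Big\}.
\end{equation*}
Here $\nu$ ranges over laws of ``locally averaged profiles'' $\bar\theta(t)$, the weak-$*$ limits of $\theta(\eta^{-1}\cdot)$ on unit scales, and $\mathcal{E}(\nu)$ is the one-dimensional transition energy $\inf_v \int (\tfrac12 v'^2 + \bar\theta W(v))$. The admissible profiles are those whose Cramér-type rate functional satisfies
\begin{equation*}
\int_\R \Lambda^*(\bar\theta(t))\,dt \le \kappa,
\end{equation*}
with $\Lambda^*$ the Legendre transform of the log-moment generating function of $\Theta_0$. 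Equivalently, $\check\sigma(\kappa) = \inf\{ E(\bar\theta) : \int \Lambda^*(\bar\theta) \le \kappa \}$.

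This explains the endpoint values. At $\kappa = 0$ only $\bar\theta \equiv \E\Theta$ is admissible, so $\check\sigma(0) = \bar\sigma(1)$. As $\kappa \to \infty$, $\bar\theta$ can sit near $\theta_*$ on arbitrarily long intervals, giving $\sigma_*$. Monotonicity is immediate from the definition. Continuity comes from convexity and lower semicontinuity of $\Lambda^*$ together with compactness of optimal profiles, which decay exponentially away from the layer.

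Strict inequality for $\kappa > 0$ follows from a first-variation argument. Since $\Theta$ is nonconstant, $\Lambda^*$ is finite near the mean and $\Lambda^*(m) \sim c(m - \E\Theta)^2$. Perturbing $\bar\theta = \E\Theta - \tau \phi$ with $\phi \ge 0$ supported near the layer lowers $E$ at first order in $\tau$, while the entropy cost is $O(\tau^2)$.

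The key steps, in order, are as follows.

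\textbf{Step 1: continuity of the energy in $\theta$.} The transition energy is continuous under weak-$*$ convergence of $\theta(\eta^{-1}\cdot)$ on bounded windows. In 1D with $a \equiv 1$ no correctors are needed, since $W(v)$ is continuous and $v$ is equicontinuous. This is exactly the technical reason for restricting to $a \equiv 1$.

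\textbf{Step 2: lower bound.} Away from its unit-size transition layer, the minimizer of the box problem is exponentially close to $\pm 1$ with cheap tails. Its energy is therefore at least $\min_s E(\theta(\eta^{-1}\cdot)|_{[s-L,s+L]}) - o(1)$ with $L$ large. A union bound over $O(\epsilon^{-1})$ discretized positions $s$, combined with the Mogulskii/Cramér large deviation upper bound for the piecewise averaged profile at scale $\eta$ on $[-L,L]$, shows the following: with high probability every window has rate functional at most $\kappa + o(1)$, because $\epsilon^{-1} e^{-(\kappa+\gamma)/\eta} \to 0$. Step 1 then gives $\liminf \ge \check\sigma(\kappa)$.

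\textbf{Step 3: upper bound.} Fix a near-optimal profile with rate functional at most $\kappa - \gamma$. Among the $\sim \epsilon^{-1}/L$ disjoint, hence independent, windows in $I/\epsilon$, the large deviation lower bound makes the probability that none realizes the profile at most
\begin{equation*}
\left(1 - e^{-(\kappa - \gamma/2)/\eta}\right)^{c\epsilon^{-1}} \to 0.
\end{equation*}
We place a heteroclinic-type profile there and glue it to the boundary data $q$ at cost $o(1)$, using the exponential decay of $W$-wells and the fact that $\theta$ is bounded. For $\kappa = \infty$ with $\eta \to 0$ the argument is the same with arbitrarily long $\theta_*$-near stretches. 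Symmetry handles $n = -1$.

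The main obstacle is Step 2. It requires a functional large deviation principle, uniform over the window location, for the rescaled checkerboard in a topology strong enough for Step 1. The argument must also justify localizing the minimizer to a single window of bounded size despite the box having length $|I|/\epsilon$. I would first try the weak-$*$ topology on $L^\infty([-L,L])$, where Mogulskii's theorem applies to the cumulative process $\eta \sum \Theta_z$. I would then handle localization by an energy-tail estimate showing that the portion of $v$ away from $\pm 1$ outside a window of length $L$ costs less than $Ce^{-cL}$ to remove.
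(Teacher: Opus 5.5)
Your proposal is correct in outline, but it reaches the theorem by a genuinely different route from the paper.

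\textbf{How the paper proceeds.} The paper never proves a large deviation principle for the coefficient field itself. It fixes a competitor $u$ and applies Cram\'er's theorem to the scalar, linear-in-$\theta$ quantity $\mathscr{F}_{1,\gamma}(u;\mathbb{R})$, whose rate $J_u$ is written in the dual form $\sup_{\xi\leq 0}$. It upgrades this to an LDP for $\sigma_{1,\gamma}(1;[-r,r])$ through a finite $H^1$-net of minimizers and left-continuity of $J_r$. It then sets $J=\inf_r J_r$ and $\check\sigma(\kappa)=\bar\sigma-\inf\{\zeta : J(\zeta)\geq\kappa\}$. Continuity of $\check\sigma$ comes from strict monotonicity of $J$, which in turn comes from the uniform quadratic convexity of $J_u$ (Hoeffding's bound on $\Lambda''$).

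\textbf{How your route relates to it.} You work on the primal side of the same convex duality: a Mogulskii-type LDP for $\theta(\eta^{-1}\cdot)$ in the weak-$*$ topology, contracted through the weak-$*$ continuous window energy of your Step 1. By Lagrange duality, $J_u(\zeta)=\inf\{\int\Lambda^*(\bar\theta)\,dt : \int(\frac12|u'|^2+\bar\theta W(u))\,dt\leq\bar\sigma-\zeta\}$. Hence, up to boundary cases, the paper's $J(\zeta)$ equals $\inf\{\int\Lambda^*(\bar\theta)\,dt : E(\bar\theta)\leq\bar\sigma-\zeta\}$. Since $J$ is strictly increasing, its generalized inverse is exactly your $\inf\{E(\bar\theta) : \int\Lambda^*(\bar\theta)\,dt\leq\kappa\}$, so the two functions $\check\sigma$ coincide.

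\textbf{What each route buys.} Your formulation is more transparent:
\begin{itemize}
\item the endpoint values and $\check\sigma(\kappa)<\bar\sigma$ for $\kappa>0$ take one line;
\item left-continuity follows by interpolating toward the mean, using convexity of $\Lambda^*$;
\item right-continuity follows by compactness and lower semicontinuity, replacing the paper's quadratic estimate.
\end{itemize}
Your remark after Step 1 is too pessimistic about your own method. In one dimension, $\int a(\eta^{-1}\cdot)|v'|^2$ is $\Gamma$-continuous under weak-$*$ convergence of $1/a$. A joint LDP for $(1/a,\theta)$ should therefore let your argument handle nonconstant $a$. The paper's fixed-$u$ approach does not readily allow this, because good competitors then oscillate on the microscale. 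The paper's route, in exchange, needs only scalar Cram\'er plus compactness of minimizers, and no infinite-dimensional LDP.

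\textbf{Shared skeleton.} Both arguments rest on the same probabilistic structure:
\begin{itemize}
\item a union bound over $O(\epsilon^{-1})$ windows for the lower bound on $\sigma$;
\item independence of separated windows for the upper bound (a second-moment bound in the paper, a product bound in yours);
\item localization plus monotonicity under inclusion, with $q=\pm1$ outside $[-1,1]$, to pass between the box of length $2R$ and bounded windows.
\end{itemize}

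\textbf{Details to repair.}
\begin{itemize}
\item Your localization should not rely on exponential tails, which fail for a merely continuous $W$ with degenerate wells. Use instead the argument of Lemma \ref{L: topology lemma}. A bounded-energy $v$ crosses from $-1+\nu$ to $1-\nu$ on an interval of length at most $E_0/\min_{[-1+\nu,1-\nu]}W$. Keep $v$ there, interpolate linearly to $\pm1$ on the adjacent unit intervals, and discard the rest, at cost $O(\nu^2+\sup_{1-\nu\leq|s|\leq 1}W(s))$. The same device handles the gluing in Step 3 and the compactness of near-optimal profiles that you need for right-continuity.
\item In Step 2, the claim that every window has rate functional at most $\kappa+o(1)$ is literally false: the raw step profile has $\int\Lambda^*(\theta(\eta^{-1}t))\,dt$ of order $L$. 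State it instead as follows. With high probability, no window profile lies in the weak-$*$ closed set $\{E_L\leq\check\sigma(\kappa+\gamma)-\nu\}$, whose infimal rate exceeds $\kappa+\gamma$ by compactness and lower semicontinuity. Then let $\gamma\downarrow 0$ using right-continuity.
\item For $\kappa=0$, Step 3 has no profile with rate at most $\kappa-\gamma$. Use the law of large numbers (weak-$*$ convergence to the mean plus Step 1), or the bound $\limsup\leq\bar\sigma$ from the companion paper, as the paper does.
\item The checkerboard is only $\mathbb{Z}$-stationary, so your LDP bounds must hold uniformly in the lattice phase of each window. This is harmless.
\end{itemize}
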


As a consequence of the theorem, in the next corollary, we obtain a characterization of the possible $\Gamma$-limits of $\mathscr{F}_{\epsilon,\delta}$.  Here limits are interpreted in terms of convergence in probability, as made precise in \cite{part1}: Namely, for any bounded interval $I \subseteq \mathbb{R}$, there is a metric space $(\mathcal{M},d_{\Gamma})$ consisting of l.s.c.\ functionals on $L^{1}(I)$ such that convergence in $\mathcal{M}$ is equivalent to $\Gamma$-convergence and $\mathbb{P} \{ \mathscr{F}_{\epsilon,\delta}(\cdot \, ; I) \in \mathcal{M} \} = 1$ for any $\epsilon \in (0,1)$ and $\delta > 0$.  The statement that a sequence $\{\mathscr{G}_{n}\}$ of $\mathcal{M}$-valued random variables $\Gamma$-converges in probability to some functional $\mathscr{G}$ means that the random variable $d_{\Gamma}(\mathscr{G}_{n},\mathscr{G})$ converges to zero in probability as $n \to \infty$.   (In the statement, we denote by $S_{u}$ the jump set of a $BV$ function $u$.)

\begin{corollary} \label{C: gamma limits} Assume $a \equiv 1$ and $\theta$ is the 1D random checkerboard as in Theorem \ref{T: main deviations}.  Suppose that $\epsilon \mapsto \delta(\epsilon)$ is any scaling such that $\epsilon^{-1} \delta(\epsilon) \to 0$ as $\epsilon \to 0$.  If there is a $\kappa \in [0,\infty]$ such that
    \begin{align} \label{E: limit corollary}
        \lim_{ \epsilon \downarrow 0 } \frac{ \delta(\epsilon) | \log \epsilon | }{ \epsilon } = \kappa , 
    \end{align}
then for any bounded open interval $I \subseteq \mathbb{R}$, the functional $\mathscr{F}_{\epsilon,\delta(\epsilon)}(\cdot \, ; I)$ $\Gamma$-converges in probability to the energy $\check{\mathscr{E}}_{\kappa}(\cdot \, ;I)$ given by
    \begin{align*}
        \check{\mathscr{E}}_{\kappa}(u;I) = \left\{ \begin{array}{r l}
            \check{\sigma}(\kappa) \# ( S_{u} \cap I ), & \text{if} \, \, u \in BV(I; \{-1,1\}), \\ \infty, & \text{otherwise.}
        \end{array} \right.
    \end{align*}
\end{corollary}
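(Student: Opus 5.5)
The plan is to derive the corollary from Theorem \ref{T: main deviations} using the general principle, recalled above (see \cite{ansini_braides_chiado-piat,morfe} and the framework of \cite{part1}), that limits of the finite-volume surface tensions determine the $\Gamma$-limit. In dimension one the only directions are $n = \pm 1$. Theorem \ref{T: main deviations} says that both surface tensions converge in probability to the same deterministic constant $\check{\sigma}(\kappa)$, on every bounded interval. So the target functional must be $\check{\sigma}(\kappa)$ times the number of jumps.

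Convergence in probability in $(\mathcal{M},d_{\Gamma})$ is equivalent to the following: every subsequence has a further subsequence along which $d_{\Gamma} \to 0$ almost surely. Fix a subsequence $\epsilon_k \to 0$. Take a countable family of intervals $J \subseteq I$ with rational endpoints. Using Theorem \ref{T: main deviations} and a diagonal argument, extract a further subsequence and an event of full probability on which
\[
\sigma_{\epsilon_k,\delta(\epsilon_k)}(\pm 1; J) \to \check{\sigma}(\kappa) \quad \text{for every such } J.
\]
On this event we then prove deterministic $\Gamma$-convergence of $\mathscr{F}_{\epsilon_k,\delta(\epsilon_k)}(\cdot\,;I)$ to $\check{\mathscr{E}}_{\kappa}(\cdot\,;I)$.

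\emph{Compactness and the lower bound.} The bounds $\theta \geq \theta_*$ and $a \equiv 1$ give the Modica--Mortola inequality $\mathscr{F}_{\epsilon,\delta} \geq \int \sqrt{\theta_*}\,|(\Phi(u))'|$, with $\Phi' = \sqrt{2W}$ (up to normalization). This yields equicoercivity and shows that finite-energy limits lie in $BV(I;\{-1,1\})$.
\begin{itemize}
\item Take $u_k \to u$ in $L^1$ with jumps $x_1 < \dots < x_m$.
\item Choose disjoint small rational intervals $J_i \ni x_i$ on which $u_k$ is close to $-1$ near one endpoint and to $1$ near the other, along a further subsequence.
\item Modify $u_k$ near the endpoints to match the boundary datum $q(\epsilon^{-1}(\cdot - c_i))$. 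This costs $o(1)$ energy, by the standard cut-off/interpolation in one dimension, since $\epsilon|u'|^2/2 + W(u)/\epsilon$ on a short transition layer of width $\epsilon$ between two values near $\pm1$ is small.
\item The modified function is a competitor for $\sigma_{\epsilon_k,\delta}(\pm 1; J_i)$, where the sign depends on the jump direction.
\end{itemize}
Summing over $i$ gives $\liminf \mathscr{F} \geq m\,\check{\sigma}(\kappa)$.

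\emph{The upper bound.} For $u$ with finitely many jumps, place small disjoint rational intervals $J_i$ around the jumps. In each $J_i$ use a minimizer for $\sigma_{\epsilon_k,\delta}(\pm1;J_i)$. Outside these intervals, set $u_k = q(\epsilon_k^{-1}(\cdot - c_i))$ continued to the next interval, with $q$ chosen to converge exponentially fast to $\pm1$ (or with $q$ exactly $\pm 1$ outside a compact set). The tails then contribute $o(1)$ energy, because $\theta \leq \theta^*$. This gives $\limsup \leq m\,\check{\sigma}(\kappa)$.

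The main obstacle is ensuring that the boundary-matching error in the lower bound is genuinely $o(1)$ uniformly in the random medium. It should be, since $\theta$ is bounded and the interpolation only involves values near the wells. A second point needing care is making the diagonal/full-probability extraction compatible with the metric $d_{\Gamma}$ of \cite{part1}. Here we will rely on the equivalence, established there, between convergence in $\mathcal{M}$ and $\Gamma$-convergence.
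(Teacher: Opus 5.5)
Your argument is correct. It reaches the result by a more hands-on route than the paper's proof.

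\textbf{The paper's route.} The paper proves the corollary in two lines. Theorem~\ref{T: main deviations} gives $\sigma_{\epsilon,\delta(\epsilon)}(\pm1;I)\to\check{\sigma}(\kappa)$ in probability for every bounded interval. The general result \cite[Theorem 4]{part1} is then invoked as a black box; it converts convergence in probability of the finite-volume surface tensions into $\Gamma$-convergence in probability in $(\mathcal{M},d_\Gamma)$.

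\textbf{Your route.} You reprove that implication by hand in the one-dimensional setting:
\begin{itemize}
\item the subsequence characterization of convergence in probability;
\item a diagonal extraction over the countably many rational intervals;
\item a deterministic $\liminf$/$\limsup$ argument in which the ``fundamental estimate'' reduces to an interpolation near the two boundary points of each $J_i$.
\end{itemize}
The cost of that interpolation is controlled only by $\theta\le\theta^*$ and continuity of $W$ near the wells. So your stated worry about uniformity in the random medium does not arise.

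\textbf{Comparison.} Your route is self-contained. It makes transparent that only the two limits $\lim\sigma(\pm1;J)$ matter, and it exploits the fact that in 1D the boundary of a cell is just two points. The paper's route is shorter and dimension-independent. It also leaves to \cite{part1} the technical points about the metric $d_\Gamma$, such as passing from almost sure $\Gamma$-convergence to $d_\Gamma\to0$. That step requires $\check{\mathscr{E}}_\kappa(\cdot\,;I)\in\mathcal{M}$.

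\textbf{One detail to make explicit in your upper bound.} A minimizer for $\sigma_{\epsilon_k,\delta}(\pm1;J_i)$ may place its transition anywhere in $J_i$. In the rare events regime it will in fact seek out atypically low values of $\theta$. So for fixed $J_i$ you only get
\[
\limsup_k\|u_k-u\|_{L^1(I)}\le 2\textstyle\sum_i|J_i|.
\]
To obtain a genuine recovery sequence, let $|J_i|\to0$ along rational intervals and diagonalize, or use the neighborhood characterization of the $\Gamma$-$\limsup$. This is routine but should be stated.
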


Notice that the limit $\check{\mathscr{E}}_{\kappa}$ defined above depends on $\kappa$ only through the number $\check{\sigma}(\kappa)$.  This is important since $\check{\sigma}$ is not necessarily injective (see Remark \ref{R: eventually constant} below).  After passing to subsequences and accounting for this degeneracy, we obtain the following characterization of the possible accumulation points of $\mathscr{F}_{\epsilon,\delta(\epsilon)}$ for a fixed choice of the scale $\epsilon \mapsto \epsilon(\delta)$.

\begin{corollary} \label{C: characterize gamma limits} Assume $a \equiv 1$ and $\theta$ is the 1D random checkerboard as in Theorem \ref{T: main deviations}.  Suppose that $\epsilon \mapsto \delta(\epsilon)$ is any scaling such that $\epsilon^{-1} \delta(\epsilon) \to 0$ as $\epsilon \to 0$.  Let $I \subseteq \mathbb{R}$ be a bounded open interval and suppose that $\mathscr{E}(\cdot \, ; I) \in \mathcal{M}$. If $\{\epsilon_{j}\}_{j \in \mathbb{N}}$ is any sequence such that $\epsilon_{j} \to 0$ as $j \to \infty$, then
    \begin{align*}
        d_{\Gamma}( \mathscr{F}_{\epsilon_{j},\delta(\epsilon_{j})}(\cdot \, ; I ) , \mathscr{E} ( \cdot \, ; I ) ) \to 0 \quad \text{in probability as} \, \, j \to \infty
    \end{align*}
if and only if there is a $\kappa \in [0,\infty]$ such that 
    \begin{align*}
        \lim_{ j \to \infty } \check{\sigma} ( \epsilon_{j}^{-1} \delta(\epsilon_{j}) | \log \epsilon_{j} | ) = \check{\sigma}(\kappa) \quad \text{and} \quad \mathscr{E}( \cdot \, ; I ) = \check{\mathscr{E}}_{\kappa}(\cdot \, ; I ) . 
    \end{align*}
In particular, in the limit $\epsilon \to 0$, the accumulation points of $\mathscr{F}_{\epsilon,\delta(\epsilon)}$ are in one-to-one correspondence with the accumulation points of $\check{\sigma}(\epsilon^{-1} \delta(\epsilon) |\log \epsilon|)$. \end{corollary}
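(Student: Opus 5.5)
The plan is to derive Corollary \ref{C: characterize gamma limits} from Corollary \ref{C: gamma limits}, using compactness of $[0,\infty]$, continuity of $\check{\sigma}$, and the metric structure of $(\mathcal{M},d_{\Gamma})$. Throughout, write $\kappa_{j} = \epsilon_{j}^{-1}\delta(\epsilon_{j})|\log\epsilon_{j}|$.

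\emph{Subsequence principle.} Corollary \ref{C: gamma limits} is stated for a full scaling $\epsilon \mapsto \delta(\epsilon)$, but its proof goes through Theorem \ref{T: main deviations}, and that argument only uses the convergence of the surface tensions along whatever sequence is considered. So we first observe that Corollary \ref{C: gamma limits} holds along any sequence $\epsilon_{j} \to 0$ with $\kappa_{j} \to \kappa$. If one prefers not to reopen the proof, we can instead build a scaling $\tilde\delta$ that agrees with $\delta$ at the points $\epsilon_{j}$ (passing to a sparse subsequence) and interpolates between them so that $\tilde\delta(\epsilon)|\log\epsilon|/\epsilon \to \kappa$ and $\tilde\delta/\epsilon \to 0$; then apply the corollary to $\tilde\delta$. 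Next we sharpen the conclusion. The limit $\check{\mathscr{E}}_{\kappa}$ depends only on $\check{\sigma}(\kappa)$, and $\check{\sigma}$ is continuous. Hence if $\check{\sigma}(\kappa_{j}) \to s = \check{\sigma}(\kappa)$, every subsequence has a further subsequence with $\kappa_{j} \to \kappa'$ and $\check{\sigma}(\kappa') = s$. Along that sub-subsequence $d_{\Gamma}(\mathscr{F}_{\epsilon_{j},\delta(\epsilon_{j})}, \check{\mathscr{E}}_{\kappa}) \to 0$ in probability, because $\check{\mathscr{E}}_{\kappa'} = \check{\mathscr{E}}_{\kappa}$. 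Since convergence in probability is metrizable, the full sequence converges. This proves the \emph{if} direction.

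\emph{Only if.} Suppose $d_{\Gamma}(\mathscr{F}_{\epsilon_{j},\delta(\epsilon_{j})},\mathscr{E}) \to 0$ in probability. By compactness of $[0,\infty]$, any subsequence has a further subsequence with $\kappa_{j} \to \kappa'$. By the previous step, along it we also have convergence in probability to $\check{\mathscr{E}}_{\kappa'}$. Limits in probability in a metric space are a.s.\ unique, so $\mathscr{E} = \check{\mathscr{E}}_{\kappa'}$, using that $\mathscr{E}$ is deterministic. The energy $\check{\mathscr{E}}_{\kappa'}$ determines $\check{\sigma}(\kappa')$: evaluate it on $u$ with a single jump in $I$. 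Therefore every subsequential limit of $\check{\sigma}(\kappa_{j})$ equals the same value $s$, with $\mathscr{E} = \check{\mathscr{E}}_{\kappa'}$ for some $\kappa'$ satisfying $\check{\sigma}(\kappa') = s$. Hence $\check{\sigma}(\kappa_{j}) \to s$, as required.

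The final statement follows by the same reasoning. Accumulation points in probability of $\mathscr{F}$ are of the form $\check{\mathscr{E}}_{\kappa}$, and $\kappa \mapsto \check{\mathscr{E}}_{\kappa}$ factors injectively through $\check{\sigma}(\kappa)$. The main obstacle is the subsequence principle: one must make sure Corollary \ref{C: gamma limits} (really Theorem \ref{T: main deviations}) applies along sequences rather than continuous scalings. The interpolation construction settles this, with care that $\tilde\delta/\epsilon \to 0$ is preserved, which holds since $\kappa_{j}/|\log\epsilon_{j}| \to 0$.
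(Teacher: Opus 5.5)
Your proposal is correct and follows essentially the same route as the paper. Both arguments pass to sub-subsequences, using compactness of $[0,\infty]$, continuity of $\check{\sigma}$, Corollary~\ref{C: gamma limits} applied along subsequences, uniqueness of the limit, and evaluation of $\mathscr{E}$ on a single-jump function to recover $\check{\sigma}(\kappa)$. Your explicit check that Corollary~\ref{C: gamma limits} may be applied along sequences is a step the paper leaves implicit; since the auxiliary scaling need not be continuous, you can (after passing to a strictly decreasing subsequence) set $\tilde{\delta}(\epsilon_{j}) = \delta(\epsilon_{j})$ and $\tilde{\delta}(\epsilon) = \epsilon c(\epsilon)/|\log \epsilon|$ elsewhere, for any positive $c$ with $c(\epsilon) \to \kappa$ and $c(\epsilon)/|\log \epsilon| \to 0$, which avoids controlling an interpolation between the $\epsilon_{j}$ (a step that is genuinely delicate when $\kappa = \infty$).
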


\begin{figure}

\centering

\begin{tikzpicture}

	\draw[->] (-4.5, -0.5) -- node[at end, right] {$ \kappa $} (4.0, -0.5);
	
	\draw[->] (-4.0, -1.0 ) -- node[at end, above] {$ \check\sigma(\kappa) $} ( -4.0, 2.7 );

	\draw ( -4.1, 0.0 ) -- node[at start, left] { $ \sigma_W \sqrt{ \theta_* } $ } ( -3.9, 0.0 );
	\draw [dashed]  ( -3.6, 0.0 ) -- ( 3.6, 0.0 );
	
	\draw ( -4.1, 1.91 ) -- node[at start, left] { $ \sigma_W \sqrt{ \bar\theta } $ } ( -3.9, 1.91 );

	\draw ( -3.6, -0.6 ) -- node[at start, below] { $ 0 $ } ( -3.6, -0.4 );

	\draw plot[domain = - 3.6 : 3.6, samples=200] (\x, { 1 - 2 * rad( atan( 2 * \x) / pi  ) });




\end{tikzpicture}
	
\caption{Theorem \ref{T: main deviations} shows that on the one hand, if $ \epsilon^{-1} \delta $ vanishes faster than $1/|\log \epsilon|$, then the relevant effective interfacial energy density is $\sigma_{W} \sqrt{\bar{\theta} }$.  This is the homogenization regime.  On the other hand if $ \epsilon^{-1} \delta $ vanishes slower than $1/|\log \epsilon|$, the minimum value $\theta_{*}$ of $\theta$ determines the macroscopic energy. Between these two behaviors there is a continuous transition w.r.t.~to the limit $\kappa$ of $\epsilon^{-1} \delta(\epsilon) |\log \epsilon |$, determined by the function $\check{\sigma}$.  Since $\check{\sigma}(\kappa) < \sigma_{W} \sqrt{ \bar{\theta} }$ for any $\kappa > 0$, the condition $\kappa > 0$ characterizes the rare events regime in this setting.}

\label{fig01}

\end{figure}

In equation \eqref{eqn:defn-sigma-check} below, we provide an explicit formula for $ \check\sigma $ in terms of a large deviations rate function that we introduce in Proposition \ref{P: large deviations}.

	\begin{remark} \label{R: eventually constant} It is possible to prove that if $\mathbb{P} \{ \theta(x) = \theta_{*} \} > 0$, then there is a $\kappa_{c} > 0$ such that $\check{\sigma}(\kappa) = \sigma_{*}$ for each $\kappa \geq \kappa_{c}$. \end{remark}
	
The proof of Theorem \ref{T: main deviations} proceeds via a novel application of large deviations techniques.  As explained in Section \ref{S: overview large deviations} below, the choice of length scales amounts to a selection of a large deviations rate: The number $\kappa$ is the rate at which atypical configurations of $\theta$ with surface tension smaller than $\check{\sigma}(\kappa)$ appear, and the left-hand side of \eqref{E: log condition} asserts those configurations occupy a nonnegligible volume fraction of space in the limit $\epsilon \downarrow 0$.    At a high level, this correspondence between length scales and large deviations rates is a quenched, variational analogue of the one identified by Vanden-Eijnden and Westdickenberg \cite{vanden-eijnden_westdickenberg} in the context of nucleation in the stochastic Allen-Cahn equation.

In Theorem \ref{T: main deviations}, we restrict attention to the case when $a$ is constant because large deviations principles are readily available for linear (in $\theta$) random variables of the form $\int_{-\infty}^{\infty} \theta(\delta^{-1} x) W(u(x)) \, dx$ for a given $u$.  We expect that the theorem above could be generalized to the case when $a$ is nonconstant.  However, since homogenization of the gradient term in \eqref{E: our energy} is determined by the inverse $( \nabla \cdot a(\delta^{-1} x) \nabla )^{-1}$, a nonlinear function of $a$, this would require significantly more effort.  In particular, this is closely related to the question of large deviations principles for the solution $u^{\delta}$ of the equation $-\nabla \cdot ( a ( \delta^{-1} x ) \nabla u^{\delta} ) = f(x)$ for a given $f$, which has yet to be treated in the literature to the best of our knowledge.

Nonetheless, the proof of Theorem \ref{T: main deviations} can be adapted to establish the following weaker version in case of the general 1D random checkerboard with $a$ nonconstant.  In particular, the case $\kappa =  \infty$ is easily treated even in this generality.

	\begin{prop}\label{P: deviations 1d checkerboard}
	Consider the random checkerboard $(a,\theta)$ in dimension $d = 1$, and let $\lambda > 0$ be the essential infimum of the random variables $\{A_{z}\}_{z \in \mathbb{Z}}$ and $\theta_{*} > 0$ be the essential infimum of $\{ \Theta_{z} \}_{z \in \mathbb{Z}}$.  If $\epsilon \mapsto \delta(\epsilon)$ is any choice of scaling for which we have
		\begin{align*}
			\lim_{\epsilon \downarrow 0} \frac{ \delta(\epsilon) | \log \epsilon | }{ \epsilon } = \infty \quad \text{and} \quad \lim_{\epsilon \downarrow 0} \frac{ \delta(\epsilon) }{ \epsilon } = 0,
		\end{align*}
	then, for any bounded interval $I$,
		\begin{align*}
			\lim_{ \epsilon \downarrow 0 } \sigma_{\epsilon,\delta(\epsilon)} ( 1 ; I ) = \lim_{ \epsilon \downarrow 0 }  \sigma_{\epsilon,\delta(\epsilon)} ( -1 ; I ) = \sigma_{W} \sqrt{ \lambda \theta_{*} } \quad \text{in probability.}
		\end{align*}
	\end{prop}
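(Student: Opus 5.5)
We prove matching lower and upper bounds for $\sigma_{\epsilon,\delta}(\pm 1;I)$. The limit $\sigma_W\sqrt{\lambda\theta_*}$ is simply the Modica--Mortola constant of the homogeneous medium $a\equiv\lambda$, $\theta\equiv\theta_*$. Both bounds use that, in one dimension, the optimal transition profile has width of order $\epsilon$. At the microscopic scale this width is only $\epsilon/\delta\to\infty$ cells. On the other hand, the window $I$ contains about $|I|/\delta$ cells, which is super-polynomially many compared with what is needed to find long stretches of near-minimal cells.

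\emph{Lower bound (deterministic).} Since $A_z\geq\lambda$ and $\Theta_z\geq\theta_*$ almost surely, the functional $\mathscr{F}_{\epsilon,\delta}(u;I)$ is bounded below by the homogeneous functional with coefficients $(\lambda,\theta_*)$. For any $u$ with the boundary data $q(\epsilon^{-1}(\cdot-x)\cdot n)$, we apply $\frac{\epsilon\lambda}{2}|u'|^2+\frac{\theta_*}{\epsilon}W(u)\geq\sqrt{\lambda\theta_*}\sqrt{2W(u)}|u'|$. Up to the normalization of $\sigma_W$, this gives at least $\sigma_W\sqrt{\lambda\theta_*}\,(1-o(1))$, where the $o(1)$ accounts for the boundary values $q(\pm|I|/(2\epsilon))$ being exponentially close to $\pm1$. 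This holds surely, with no probability involved.

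\emph{Upper bound.} Fix $\eta>0$ and $L$ large. The optimal homogeneous $(\lambda,\theta_*)$ profile $\bar u$ achieves the cost $\sigma_W\sqrt{\lambda\theta_*}+\eta$ when its transition is confined to an interval of length $L\epsilon$, after cutting off the tails and gluing to $\pm1$ and then to the boundary data at cost $O(\eta)$. Now let $p=\mathbb{P}\{A_0\leq\lambda+\eta,\ \Theta_0\leq\theta_*+\eta\}$. Since these are essential infima, $p>0$ provided the infima are attained jointly in the sense $p>0$. I expect this to be the main subtlety: the essential infima of the two marginals need not be attained simultaneously.

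If $A_z$ and $\Theta_z$ are independent, or under the paper's hypothesis understood as a joint infimum, we have $p>0$. Then the probability that a fixed block of $N=\lceil L\epsilon/\delta\rceil$ consecutive cells is all ``good'' equals $p^N=\exp(-N|\log p|)$. The interval $I$ contains about $|I|\delta/(L\epsilon)\cdot\delta^{-1}$ disjoint blocks, so by independence the probability that none is good is at most
\[
\exp\bigl(-c\,\tfrac{\delta}{\epsilon}\cdot\tfrac{1}{\delta}\,e^{-CL\epsilon/\delta}\bigr).
\]
The exponent is $\epsilon^{-1}\exp(-CL\epsilon/\delta)=\exp(|\log\epsilon|-CL\epsilon/\delta)$. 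Since $\delta|\log\epsilon|/\epsilon\to\infty$, we have $CL\epsilon/\delta=o(|\log\epsilon|)$, so this quantity tends to $\infty$, and hence a good block exists with probability tending to one. We place the transition of $\bar u$, with coefficients $(\lambda+\eta,\theta_*+\eta)$, on that block, and set $u\equiv\pm1$ away from it, interpolating to the boundary data near $\partial I$. The gluing costs only $O(\eta)$ regardless of the coefficients, because $u$ is exponentially close to $\pm1$ there and $a,\theta$ are bounded. This gives an energy of at most $\sigma_W\sqrt{(\lambda+\eta)(\theta_*+\eta)}+C\eta$.

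One small point remains: the boundary data is pinned at the center of $I$, while the transition sits at the good block. We handle this with a minimizer competitor that is $-1$ on one side and $+1$ on the other, with monotone interpolation, using that $q$ converges to $\pm1$ away from the center at scale $\epsilon$. The gluing regions near $\partial I$ then cost $o(1)$. Letting $\eta\to0$ yields the upper bound in probability. The case $n=-1$ follows by reflection. If the infima are not jointly attained, the correct constant would involve $\operatorname{ess\,inf}\sqrt{A\Theta}$, so we interpret the statement as requiring joint attainment.
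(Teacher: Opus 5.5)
Your proof is correct and follows the paper's route. The lower bound is a sure comparison with the constant coefficients $(\lambda,\theta_*)$. The upper bound locates, among the $\sim\epsilon^{-1}$ independent mesoscopic windows, one on which every cell is near-extremal; this succeeds with probability tending to one because $\epsilon^{-1}p^{L\epsilon/\delta}\to\infty$ thanks to $\delta|\log\epsilon|/\epsilon\to\infty$. Your disjoint-block product bound is a slightly cleaner substitute for the paper's second-moment argument.

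Your joint-attainment caveat is also right, and it exposes an unstated hypothesis in the paper's own proof. There, the claim $\mathbb{P}(\Omega_0^\gamma)\sim p^{r/\gamma}$ with $p>0$ needs $\mathbb{P}\{A_0\le(1+\nu)\lambda,\ \Theta_0\le(1+\nu)\theta_*\}>0$. This fails e.g.\ for $(A_0,\Theta_0)$ uniform on $\{(1,2),(2,1)\}$: there $a\theta\equiv 2$, so your AM--GM bound forces the limit to be at least $\sqrt{2}$ times $\sigma_W\sqrt{\lambda\theta_*}$.
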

		
As is recalled at the beginning of Section \ref{S: heuristics} and Section \ref{S: deviations 1d checkerboard}, the strict inequalities $\bar{\theta} > \theta_{*}$ and $\bar{a} > \lambda$ hold as soon as $\{A_{z}\}$ and $\{\Theta_{z}\}$ are nonconstant, so that, in particular, $\bar{\sigma}(1) > \sigma_{W} \sqrt{ \lambda \theta_{*} }$.  Therefore, the previous result shows that, for the 1D random checkerboard, it is always possible to find a scaling $\epsilon \mapsto \delta(\epsilon)$ for which equality fails to hold in \eqref{E: what we want to prove}.
	
\subsection{Counterexamples in Higher Dimensions} The results of the previous section show that, in dimension one, even in the case of the random checkerboard, the energy $\mathscr{F}_{\epsilon,\delta}$ need not converge to the homogenized functional $\bar{\mathscr{E}}$ if the ratio $\epsilon^{-1} \delta$ does not vanish fast enough.  

The remainder of the paper is devoted to some further counterexamples, this time in arbitrary dimensions, which demonstrate further pathologies that can be encountered. For simplicity, we restrict attention to media with constant $a \equiv \text{Id}$ and with the function $\theta$ taking the form
	\begin{equation} \label{E: quasi one dimensional}
		\theta(y) = \theta^{\text{stripe}}(y_{1}) \tilde{\theta}(y), \quad \text{where} \, \, y = (y_{1},y_{2},\dots,y_{d}),
	\end{equation}
where $\theta^{\text{stripe}}$ and $\tilde{\theta}$ are suitable stationary ergodic fields.

In the previous works \cite{ansini_braides_chiado-piat,hagerty}, positive results were obtained in the periodic setting under the assumption that $\epsilon^{- \gamma } \delta \to 0$ for some $\gamma > 1$.  The next result shows that these cannot be extended to the random case.  The form \eqref{E: quasi one dimensional} of $\theta$ motivates consideration of the direction $e_{1} = (1,0,\dots,0)$.

	\begin{theorem} \label{T: algebraic correlation counterexample}Let $a \equiv \text{Id}$ and let $\tilde{\theta}$ be a $d$-dimensional random checkerboard with mean $\mathbb{E}[\tilde{\theta}] = 1$.  Fix a $\gamma > 1$.  It is possible to construct the joint law of $(\theta^{\text{stripe}},\tilde{\theta})$ such that,  in any dimension $d$, if $\theta$ is defined by \eqref{E: quasi one dimensional} and the scale $\epsilon \mapsto \delta(\epsilon)$ is chosen so that both $\epsilon^{-1} \delta(\epsilon) \to 0$ as $\epsilon \to 0$ and
		\begin{equation*}
			\liminf_{\epsilon \downarrow 0} \epsilon^{-\gamma} \delta(\epsilon) > 0,
		\end{equation*}
	then, for any $\varrho > 0$,
		\begin{equation} \label{E: algebraic correlations lim inf}
			\liminf_{ \epsilon \downarrow 0 } \varrho^{-(d-1)} \sigma_{\epsilon,\delta}(e_{1} ; [-\varrho/2,\varrho/2]^{d} )  \leq \sigma_{W} \sqrt{\theta^{\text{stripe}}_{*}}  < \bar{\sigma}(e_{1})
		\end{equation}
	in probability.  Above $\theta_{*}^{\text{stripe}}$ is the essential infimum of $\theta^{\text{stripe}}$.\end{theorem}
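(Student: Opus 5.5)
We plan to construct $\theta^{\text{stripe}}$ as a one-dimensional stationary ergodic field built from i.i.d. blocks whose lengths have heavy (algebraic) tails, independent of $\tilde{\theta}$. We take $\tilde\theta$ independent of $\theta^{\text{stripe}}$ to be the given checkerboard. The stripe field alternates between the value $\theta_*^{\text{stripe}}$ on "low" intervals and a value $\theta^{\text{high}}$ on the complementary intervals. The lengths $L$ of the low intervals are drawn from a law with $\P\{L > t\} \approx t^{-\beta}$, with $\beta>0$ small depending on $\gamma$. We then make the construction stationary via the standard renewal (size-biased initial interval, uniform offset) procedure, which requires $\E[L]<\infty$; alternatively we truncate and use a mixture so that the mean exists but atypically long low intervals still appear with algebraically small probability. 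The parameters are chosen so that $\bar\theta = \E[\theta^{\text{stripe}}]\E[\tilde\theta] > \theta_*^{\text{stripe}}$, which yields the strict inequality $\sigma_W\sqrt{\theta_*^{\text{stripe}}}<\bar\sigma(e_1)$ since $a\equiv\mathrm{Id}$ gives $\bar\sigma(e_1)=\sigma_W\sqrt{\bar\theta}$.

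The key probabilistic step is the following. In the box $[-\varrho/2,\varrho/2]^d$, rescaled by $\delta^{-1}$, the $y_1$-window has length $\varrho\delta^{-1}$. We show that with probability tending to one this window contains a low stripe of length at least $M\epsilon\delta^{-1}$ for any fixed $M$. Heuristically, the window contains about $\varrho\delta^{-1}$ independent intervals, each exceeding length $M\epsilon\delta^{-1}$ with probability about $(M\epsilon/\delta)^{-\beta}$. The expected number of such long intervals is therefore about $\delta^{-1}(\delta/\epsilon)^{\beta}$. Using $\delta\gtrsim \epsilon^{\gamma}$, i.e.\ $\epsilon\lesssim\delta^{1/\gamma}$, this is at least $\delta^{-1+\beta(1-1/\gamma)}$, which diverges once $\beta$ is small. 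A second-moment or independence argument over disjoint sub-windows then upgrades this to high probability. We must verify that $\beta$ can be chosen compatible with $\E[L]<\infty$. With pure tails $t^{-\beta}$ and $\beta<1$ the mean is infinite, so we instead realize the rare long intervals through a mixture. At each renewal we draw, with probability $p_k\sim 2^{-k\beta'}$, an interval of length $2^{k}$, and take $\beta'>1$ for a finite mean. The counting then requires $\delta^{-1}(\epsilon/\delta)^{-\beta'}\to\infty$, i.e.\ $\delta^{-1+\beta'}\epsilon^{-\beta'}\to\infty$. Using $\epsilon\le C\delta^{1/\gamma}$, the exponent of $\delta$ becomes $-1+\beta'(1-1/\gamma)$, which is negative provided $1<\beta'<\gamma/(\gamma-1)$; such a $\beta'$ exists since $\gamma>1$. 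This choice of law is where $\gamma$ enters, and we expect it to be the main point to get right.

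Given such a long low stripe $\{y_1\in J\}$ with $|J|\ge M\epsilon\delta^{-1}$, we construct a competitor. We place the one-dimensional optimal profile $q((x_1-x_1^0)/\epsilon)$ centered inside $\delta J$, truncated over a length $M\epsilon/2$ on either side, and interpolate to the boundary data $q(\epsilon^{-1}x_1)$ near $\partial Q$. The energy splits into three parts. The part inside the stripe equals $\epsilon^{-1}\int \theta^{\text{stripe}}_*\tilde\theta(\delta^{-1}x)W(u)\,dx$ plus the gradient term. The part outside the stripe, where $u$ is within $e^{-cM}$ of $\pm1$, is $O(e^{-cM})\varrho^{d-1}$ for nondegenerate $W$; for general continuous $W$ we use a compactly supported near-optimal profile instead. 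The boundary layer cost comes from shifting the transition away from $x_1=0$ to the stripe location; we handle it with the standard trick of allowing $u\equiv\pm1$ except near lateral faces, which costs $o(\varrho^{d-1})$ when lateral mismatch is confined to width $\epsilon$. The factor $\tilde\theta$ is handled by the ergodic theorem: in the integral over $Q\cap\{x_1\in\delta J\}$, the transverse variables average $\tilde\theta(\delta^{-1}\cdot)$ over cross-sections of side $\varrho\delta^{-1}\to\infty$. We have $\E\tilde\theta=1$ and $\tilde\theta$ is independent of the stripe, and the stripe position is chosen depending only on $\theta^{\text{stripe}}$. Hence the LLN for the checkerboard, applied uniformly over the $O(\delta^{-1})$ candidate slabs via a union bound with exponential concentration, gives $\tilde\theta\to1$ weakly. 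The resulting energy is at most $(\sigma_W\sqrt{\theta_*^{\text{stripe}}}+o_M(1))\varrho^{d-1}$ with probability tending to one. Letting $M\to\infty$ gives the claimed bound.
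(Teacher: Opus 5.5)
Your overall plan matches the paper's: a heavy-tailed stripe field independent of $\tilde\theta$, a first-moment count to find a long low stripe, a planar optimal profile placed in that stripe, and concentration for $\tilde\theta$. Your stationary renewal process with low-interval tail exponent $\beta'\in(1,\gamma/(\gamma-1))$ is a legitimate substitute for the paper's ``lamp'' model, whose tail $\mathbb{P}\{X_m\ge n\}\sim n^{-(1+\alpha)}$ with $\alpha<(\gamma-1)^{-1}$ lies in exactly the same exponent window. The step that fails as written is the boundary layer. You look for the long low stripe anywhere in $|x_1|\le\varrho/2$, so its location $x_1^0$ is in general at distance of order $\varrho$ from $0$. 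On each lateral face $\{x_i=\pm\varrho/2\}$, $i\ge 2$, the datum $q(\epsilon^{-1}x_1)$ and your profile $q_*(\epsilon^{-1}(x_1-x_1^0))$ then differ by an amount of order one wherever $x_1$ lies between $0$ and $x_1^0$. This set has $(d-1)$-dimensional measure of order $|x_1^0|\varrho^{d-2}$. Along every segment normal to the face over this set, the interpolation must go from near $-1$ to near $+1$. By the one-dimensional Modica--Mortola lower bound, this costs a fixed positive amount per unit area, whatever the thickness of the layer ($\epsilon$ or otherwise). The extra energy is therefore of order $|x_1^0|\varrho^{d-2}$. That is comparable to the main term $\sigma_W\sqrt{\theta^{\text{stripe}}_*}\,\varrho^{d-1}$, not $o(\varrho^{d-1})$, so as written you only obtain the bound up to an order-one relative error.

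The repair is cheap in your framework: search for the long stripe only in $|x_1|\le\varrho'$ for an arbitrary $\varrho'\in(0,\varrho/2)$. The expected number of long low intervals there is $\gtrsim\varrho'\delta^{-1}(\epsilon/\delta)^{-\beta'}$, which still diverges. The boundary correction becomes $O(\varrho'\varrho^{d-2})$, and you let $\varrho'\to0$ after $\epsilon\to0$. This is precisely what the paper does. Theorem \ref{T: general rare events} requires $[T_\epsilon-M\epsilon,T_\epsilon+M\epsilon]\subseteq[-\varrho',\varrho']$ for every $\varrho'>0$, and restores the boundary values via the fundamental estimate of $\Gamma$-convergence at a cost proportional to $\varrho'$.

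A smaller point: drop the union bound over the $O(\delta^{-1})$ candidate slabs. In $d=1$ the concentration exponent is only of order $\epsilon/\delta$, which can grow much more slowly than $\log(1/\delta)$; for instance $\delta=\epsilon/\log\log(1/\epsilon)$ is admissible, and then the union bound diverges. Use instead the independence you already noted. Condition on $\theta^{\text{stripe}}$, so that the slab is deterministic and a single concentration estimate suffices, as in the paper.
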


See Section \ref{S: long range correlations} for the proof.
		
So far, the examples discussed all involved random media.  The next example shows that atypical configurations also play a role in the case of almost periodic media.  

As in the companion paper \cite{part1}, we consider almost periodic functions in the sense of Besicovitch: These are functions in the Besicovitch spaces $\{ B^{p}(\mathbb{R}^{d}) \, \mid \, 1 \leq p \leq \infty\}$ obtained for any $p \geq 1$ by completing the algebra of trigonometric polynomials (with arbitrary, possibly noninteger frequencies) under the norm
	\begin{align*}
		\| f \|_{B^{p}(\mathbb{R}^{d})} = \left\{ \begin{array}{r l}
									\left( \lim_{ R \to \infty} \frac{1}{(2R)^{d}} \int_{[-R,R]^{d}} |f|^{p} \, dx \right)^{\frac{1}{p}} , & \text{if} \, \, p < \infty, \\
									\|f\|_{L^{\infty}(\mathbb{R}^{d})}, & \text{if} \, \, p = \infty .
								\end{array} \right.
	\end{align*}
In the next result, we define $\theta$ in \eqref{E: quasi one dimensional} by constructing a suitable function $\theta^{\text{stripe}} \in \cap_{ 1 \leq p < \infty } B^{p}(\mathbb{R})$ and letting $\theta$ be an arbitrary continuous, positive $\mathbb{Z}^{d}$-periodic function, so that H\"{o}lder's inequality implies that $\theta \in \cap_{1 \leq p < \infty} B^{p}(\mathbb{R}^{d})$.  The relevant definition of $\bar{\theta}$ in this case is the mean value
	\begin{align} \label{E: mean value}
		\bar{\theta} = \lim_{ R \uparrow \infty } \frac{ 1 }{ (2R)^{d} } \int_{ [ -R, R ]^{ d } } \theta \, dx 
	\end{align}
and we use this number to define $\bar{\sigma}(e) = \sigma_{W} \sqrt{ \bar{\theta} }$ as in random media. 

	\begin{theorem} \label{T: almost periodic counterexample}Let $a \equiv \text{Id}$ and let $\tilde{\theta}$ be an arbitrary positive, continuous, $\mathbb{Z}^{d}$-periodic function with mean $\int_{\mathbb{T}^{d}} \tilde{\theta} = 1$. There is a choice of almost periodic function $\theta^{\text{stripe}} : \mathbb{R} \to \{1,2\}$, which is in $B^{p}(\mathbb{R})$ for any $p < \infty$, and a scale $\epsilon \mapsto \delta(\epsilon)$ satisfying $\epsilon^{-1} \delta(\epsilon) \to 0$ as $\epsilon \to 0$ such that  if $\theta$ is given by \eqref{E: quasi one dimensional}, then, for any $\varrho > 0$,
		\begin{equation} \label{E: almost periodic example equation}
			\lim_{\epsilon \downarrow 0} \varrho^{-(d-1)} \sigma_{\epsilon,\delta(\epsilon)}(e_{1}; [-\varrho/2,\varrho/2]^{d} ) \leq \sigma_{W}  < \bar{\sigma}(e_{1}).
		\end{equation}		
	\end{theorem}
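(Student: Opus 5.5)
We construct $\theta^{\text{stripe}}$ as a limit of periodic functions. The stripe takes the value $1$ on long ``defect'' intervals whose placement is almost periodic, and the value $2$ elsewhere. We then pick the scale $\delta(\epsilon)$ along a sequence so that, at scale $\epsilon$, every box of size $\varrho$ contains a stripe region of width $\gg \epsilon$ (in microscopic units, of length $\gg \epsilon/\delta$) where $\theta^{\text{stripe}} \equiv 1$.

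Concretely, fix rapidly increasing integers $L_1 < L_2 < \dots$ with $L_{k}$ dividing $L_{k+1}$, and lengths $\ell_k$ with $\ell_k \to \infty$ but $\sum_k \ell_k/L_k$ small (say $\le 1/4$). Set $\theta^{\text{stripe}}(t) = 1$ if $t \in \bigcup_k \big( [0,\ell_k) + L_k \mathbb{Z}\big)$ and $2$ otherwise. Each truncation $\theta_K$, using only $k \le K$, is $L_K$-periodic, hence a uniform limit of trigonometric polynomials in the $B^p$ sense. Moreover $\|\theta^{\text{stripe}} - \theta_K\|_{B^p}^p \le \sum_{k>K} \ell_k/L_k \to 0$, so $\theta^{\text{stripe}} \in B^p$ for all $p<\infty$. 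Its mean is at least $2 - \sum_k \ell_k/L_k \ge 7/4$. Since $\tilde\theta$ is periodic with mean $1$ and the product has separated variables, the mean of $\theta$ is $\bar\theta = \overline{\theta^{\text{stripe}}} \ge 7/4 > 1$, so $\bar\sigma(e_1) = \sigma_W\sqrt{\bar\theta} > \sigma_W$.

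Next we choose the scale. Take $\epsilon_k \downarrow 0$ and set $\delta_k = \epsilon_k \cdot \varrho_0 / L_k$ for a fixed small $\varrho_0$, interpolating monotonically for intermediate $\epsilon$. Then $\epsilon^{-1}\delta \to 0$. At scale $\delta_k$, the copies of the interval $[0,\ell_k)$ appear with macroscopic spacing $\delta_k L_k = \epsilon_k \varrho_0 \to 0$ and macroscopic width $\delta_k \ell_k = \epsilon_k \varrho_0 \ell_k/L_k$. We choose $\ell_k$ with $\ell_k/L_k \to 0$ slowly enough that this width is still $\gg \epsilon_k$; for instance $\ell_k/L_k = 2^{-k}$ with $\varrho_0$ replaced by $\varrho_0 2^{k} k$, i.e.\ $\delta_k = \epsilon_k k/\ell_k$. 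This needs $\epsilon_k k/\ell_k \ll \epsilon_k$, which holds once $\ell_k \gg k$, and the spacing $\delta_k L_k = \epsilon_k k 2^k$ must go to $0$, which holds if $\epsilon_k \to 0$ fast. Thus every box $[-\varrho/2,\varrho/2]^d$ contains a slab $\{x_1 \in J\}$, with $|J| \gg \epsilon$, on which $\theta(x) = \tilde\theta(\delta^{-1}x)$. To fill in the gaps between the chosen $\epsilon_k$, we choose $\delta(\epsilon)$ piecewise: for $\epsilon \in [\epsilon_{k+1},\epsilon_k]$ use the level-$k$ defects, with $\delta = \epsilon k/\ell_k$. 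The stated limit in \eqref{E: almost periodic example equation} then holds along all $\epsilon$, since the upper bound is uniform.

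For the energy estimate, we build a competitor for $\sigma_{\epsilon,\delta}(e_1;[-\varrho/2,\varrho/2]^d)$. We place a one-dimensional optimal-profile transition $u(x) = q_*(\epsilon^{-1}(x_1 - x_J))$ centered in the slab $J$, and interpolate, near $\partial Q$ and far from $J$, to the boundary datum $q(\epsilon^{-1} x_1)$. The interpolation region lies where $u$ is exponentially close to $\pm 1$ (we use a truncated profile to avoid needing decay of $W$), so its cost is $o(1)$ as the slab width over $\epsilon$ tends to infinity. Inside the slab, the energy is $\int \big( \tfrac{\epsilon}{2}|\nabla u|^2 + \epsilon^{-1}\tilde\theta(\delta^{-1}x) W(u) \big)$. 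Because $\tilde\theta$ is periodic at scale $\delta \ll \epsilon$ and $u$ varies on scale $\epsilon$, the weighted term converges to its average with $\tilde\theta$ replaced by its mean $1$, by a Riemann–Lebesgue/averaging argument on each $x_1$-line after integrating in $x'$. This gives energy $\varrho^{d-1}(\sigma_W + o(1))$.

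A subtlety is the lateral boundary: the datum $q(\epsilon^{-1}x_1)$ is centered at $x_1 = 0$, not at $x_J$. We therefore also shift in $x'$ near $\partial Q$ through a boundary layer of width $\eta\varrho$, which costs $O(\eta)\varrho^{d-1}$ times $\theta^* |x_J|/\epsilon$. Since $|x_J| \lesssim \delta L_k \ll \varrho$ but not $\ll \epsilon$, this is the main obstacle. The intended resolution is to choose the spacing $\delta_k L_k$ comparable to $\epsilon_k$ times a slowly growing factor, and to show the tilted-interface cost over the layer is $O(\eta + |x_J|/(\eta\varrho))$ times $\varrho^{d-1}$, which vanishes upon sending $\epsilon \to 0$ and then $\eta \to 0$.
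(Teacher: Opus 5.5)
Your construction, as written, does not produce a function in $B^{p}(\mathbb{R})$. Every level contains $[0,\ell_k)$ and $\ell_k\to\infty$, so $\bigcup_k([0,\ell_k)+L_k\mathbb{Z})\supseteq[0,\infty)$. Hence your $\theta^{\text{stripe}}$ is identically $1$ on the positive half-line. On $(-\infty,0)$, by contrast, the defect set has density at most $\tfrac43\sum_k\ell_k/L_k<1$, because its level-$k$ part misses $(-L_k+\ell_k,0)$. This is incompatible with $B^1$. If $f\in B^1(\mathbb{R})$, both one-sided averages $\frac1R\int_0^R f$ and $\frac1R\int_{-R}^0 f$ converge to the mean of $f$: approximate $f$ by a trigonometric polynomial $P$ and use $\frac1R\int_0^R|f-P|\le\frac1R\int_{-R}^R|f-P|$. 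Here they converge to $1$ and to a number strictly larger than $1$.

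The faulty step is $\|\theta^{\text{stripe}}-\theta_K\|_{B^p}^p\le\sum_{k>K}\ell_k/L_k$, which exchanges $\limsup_{R\to\infty}$ with an infinite sum. Writing $A_k$ for the level-$k$ defect set, this is legitimate only if $\frac{1}{2R}|A_k\cap[-R,R]|\lesssim\ell_k/L_k$ uniformly in $R$. For intervals anchored at $0$ the ratio is at least $\frac12$ whenever $R\le\ell_k$. The repair is to offset the levels, e.g.\ $A_k=[L_k/2,L_k/2+\ell_k)+L_k\mathbb{Z}$ with $\ell_k\le L_k/4$. Then $A_k\cap[-R,R]=\emptyset$ for $R\le L_k/4$, and $\frac{1}{2R}|A_k\cap[-R,R]|\le4\ell_k/L_k$ for all $R$, so $\|\theta^{\text{stripe}}-\theta_K\|_{B^p}^p\le4\sum_{k>K}\ell_k/L_k\to0$. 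Your choice $\delta=\epsilon k/\ell_k$ on $[\epsilon_{k+1},\epsilon_k]$ still places a level-$k$ plateau of macroscopic width $k\epsilon$ within distance $\delta L_k\to0$ of the origin. Separately, $\theta$ does not have separated variables, since $\tilde\theta$ depends on $y_1$. So $\bar\theta=\overline{\theta^{\text{stripe}}}$ is unjustified in general. What you need, and what the paper's contradiction argument gives, is $\bar\theta\ge1+(\min\tilde\theta)\,\mathrm{dens}\{\theta^{\text{stripe}}=2\}>1$.

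Second, the lateral-boundary ``main obstacle'' is not one, and your first cost estimate (proportional to $|x_J|/\epsilon$) is wrong. Let $\phi(x')$ equal $x_J$ away from the lateral faces and $0$ on them, with $|\nabla\phi|\lesssim|x_J|/(\eta\varrho)$ in a layer of width $\eta\varrho$. The profile $q_*(\epsilon^{-1}(x_1-\phi(x')))$ costs at most $C(1+|\nabla\phi|^2)$ per unit $x'$-area, bounding $\theta$ by its supremum in the layer. So the layer costs $O\big(\eta+|x_J|^2/(\eta\varrho^2)\big)\varrho^{d-1}$. This vanishes as $\epsilon\to0$ and then $\eta\to0$ simply because $|x_J|\to0$ on the macroscopic scale; no tuning of the spacing against $\epsilon$ is needed. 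The paper does this step with the fundamental estimate of $\Gamma$-convergence \cite[Theorem 6]{morfe} (Step 1 of the proof of Theorem \ref{T: general rare events}), at a cost proportional to $\varrho'\varrho^{d-1}$ where $|T_\epsilon|\le\varrho'$.

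With these repairs your proof is a valid and genuinely different route. The paper uses the Liouville-number flow on $\mathbb{T}^2$, the ergodic theorem, and the abstract scale selection of Proposition \ref{P: one dimensional weird thing helper}. You instead obtain an explicit limit-periodic (Toeplitz-type) stripe and an explicit $\delta(\epsilon)$, which is more elementary and fully deterministic. The paper's version exhibits the phenomenon along almost every orbit of a natural quasi-periodic flow. The energy estimate itself is the same as in Theorem \ref{T: general rare events}(ii).
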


This last result shows that the rare events regime is not merely a probabilistic phenomenon, but is instead a general feature of stationary ergodic media.  The proof is given in Section \ref{S: quasiperiodic examples}.

It should be stressed that \cite[Corollary 2]{part1} implies that $\mathscr{F}_{\epsilon,\delta}$ always converges to $\bar{\mathscr{E}}$ if the coefficients $a$ and $\theta$ are periodic or uniformly almost periodic (i.e., functions in $B^{\infty}$), no matter how slowly the ratio $\epsilon^{-1} \delta$ vanishes.  The previous theorem thus shows that this is no longer true if the coefficients are nonuniformly almost periodic.

\addtocontents{toc}{\protect\setcounter{tocdepth}{0}}
\section*{Acknowledgements}  

We thank Felix Otto for organizing a stimulating research environment at the MPI in Leipzig, where parts of this research were conducted. The first author acknowledges the support of NSF Grant DMS-2202715.  We also thank the late Robert V.\ Kohn for bringing our attention to reference \cite{vanden-eijnden_westdickenberg}.

\addtocontents{toc}{\protect\setcounter{tocdepth}{2}}
\tableofcontents

\section{Rare Events in the 1D Random Checkerboard} \label{S: heuristics}

In this section, we analyze the functional $\mathscr{F}_{\epsilon,\delta}$ in 1D in the case when $a \equiv 1$ and $\theta$ is the random checkerboard.  In particular, we prove Theorem \ref{T: main deviations}, giving a full characterization of the possible limits.  We also prove Proposition \ref{P: deviations 1d checkerboard}, which provides a partial extension to the case when both $a$ and $\theta$ are 1D random checkerboards.

Since $S^{0} = \{-1,1\}$ and the random checkerboard is reflection invariant, $\bar{\sigma}$ is a constant function.  For notational convenience, we will abuse notation by letting $\bar{\sigma}$ also denote this constant. For brevity, we will use the notation $\bar{\mathscr{F}}$ and $\mathscr{F}_{c}$ for any constant $c > 0$ to denote the constant-coefficient functionals
    \begin{align*}
        \bar{\mathscr{F}}(u; \mathbb{R}) &= \int_{-\infty}^{\infty} \left( \frac{1}{2} |u'|^{2} + \bar{\theta} W(u) \right) \, dy, \quad \mathscr{F}_{c}(u;\mathbb{R}) &= \int_{-\infty}^{\infty} \left( \frac{1}{2} |u'|^{2} + c W(u) \right) \, dy,
    \end{align*}
where we recall that $\bar{\theta}$ is the mean of $\theta$.

We will use the fact that $ \bar\sigma $ can be characterized in terms of the variational problem
\begin{align}
		\bar\sigma &= \min \left\{ \bar{\mathscr{F}}(u;\mathbb{R}) \, \mid \, u(\pm \infty) = \pm 1 \right\}, \label{eqn:bar-sigma}
\end{align}
see \cite[(3.5)]{AlbertiLectureNotes} and also \eqref{E: homogenized energy}. By the same reasoning, $ \sigma_* $ as defined in Theorem \ref{T: main deviations} is given by
\begin{align} \label{E: lower energy variational formula}
		\sigma_*&= \min \left\{ \mathscr{F}_{\theta_{*}}(u;\mathbb{R}) \, \mid \, u(\pm \infty) = \pm 1 \right\} ,
\end{align}
where, as in the statement of the theorem, $\theta_{*}$ is the essential infimum of $\theta$.
Since $\theta$ is nonconstant in Theorem \ref{T: main deviations}, we know that $ \theta_* < \bar\theta $, and thus $ \sigma_* < \bar\sigma $.

\subsection{Deviations in the 1D Random Checkerboard} \label{S: overview large deviations} We now prove the main result, Theorem \ref{T: main deviations}.  We proceed by proving three intermediate results.
To make the main ideas of the proof more transparent, it is convenient to assume $q$ is constant outside of the interval $(-1,1)$:
	\begin{align} \label{A: triviality}
		q( y ) = 1 \quad \text{if} \, \, y \geq 1, \quad q(y) = -1 \quad \text{if} \, \, y \leq -1.
	\end{align}
Assumption \eqref{A: triviality} reduces some technical overhead in the proof and is justified by the following remark. 
		
		\begin{remark}\label{rmk: triviality} The asymptotic behavior of the finite-volume surface tension $\sigma_{\epsilon,\delta}$ is not affected by the choice of boundary condition $q$.  This can be seen, for instance, via arguments similar to those in \cite[Theorem 3.7]{ansini_braides_chiado-piat} or \cite[Proposition 10]{morfe}. 
		\end{remark}
        
First, we prove a large deviations principle for the finite-volume surface tension in a fixed mesoscopic interval.

	\begin{prop} \label{P: large deviations} Assume \eqref{A: triviality}.  For any $r \geq 1$, there is a nondecreasing left-continuous function $J_{r} : [0,\infty) \to [0,\infty]$ such that, for any $\zeta \geq 0$, 
		\begin{align}\label{eqn:ldp-principle-r}
				\lim_{ \gamma \downarrow 0 } \gamma \log \mathbb{P} \left\{ \sigma_{ 1, \gamma } ( 1 ; [-r,r] ) \leq \bar{\sigma} - \zeta \right\} = -J_{r}(\zeta).
		\end{align}
	Further, the functions $\{J_{r}\}_{r \geq 1}$ are nonincreasing in $r$, and the limit $J = \inf_{r \geq 1} J_{r}$ has the following properties:
		\begin{itemize}
			\item[(i)] $J(0) = 0$ and $J$ is continuous at zero.
			\item[(ii)] The interior of the set $\{ \zeta \geq 0 \, \mid \, J(\zeta) < \infty \}$ equals $(0,\bar{\sigma} - \sigma_{*})$.
			\item[(iii)] $J$ is strictly increasing and left-continuous in the interval $[0,\bar{\sigma} - \sigma_{*}]$. 		
		\end{itemize}  
	\end{prop}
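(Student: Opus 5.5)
The plan is to derive \eqref{eqn:ldp-principle-r} from a Mogulskii-type large deviations principle for the random density $x \mapsto \theta(\gamma^{-1}x)$ on $[-r,r]$, combined with the contraction principle. Let $\Lambda(s)=\log\E[e^{s\Theta_0}]$ and let $\Lambda^*$ be its Legendre transform (the Cram\'er rate function of $\Theta_0$). We regard $\theta(\gamma^{-1}\cdot)$ as a random element of the ball $\mathcal{B}=\{\rho : \theta_*\le\rho\le\theta^*\}\subset L^\infty(-r,r)$ with the weak-$*$ topology. This space is compact and metrizable. On it, the i.i.d.\ block structure, with about $2r/\gamma$ blocks of length $\gamma$, gives an LDP with speed $\gamma^{-1}$ and good convex rate functional
\[
I_r(\rho)=\int_{-r}^r\Lambda^*(\rho(x))\,dx .
\]
This follows from G\"artner--Ellis applied to the linear functionals $\rho\mapsto\int\rho\varphi$. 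Next define the deterministic surface tension
\[
S_r(\rho)=\min\Big\{\int_{-r}^r \big(\tfrac12|u'|^2+\rho W(u)\big)\,dx \;:\; u=q \text{ on } \{-r,r\}\Big\},
\]
so that $\sigma_{1,\gamma}(1;[-r,r])=S_r(\theta(\gamma^{-1}\cdot))$.

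The map $S_r$ is weak-$*$ continuous on $\mathcal B$. It is an infimum of functionals that are affine in $\rho$. Near-minimizers have $H^1$ norm bounded uniformly over $\mathcal B$, so they form a precompact set in $C([-r,r])$. Consequently $\int \rho_n W(u_n)\to\int\rho W(u)$ whenever $\rho_n \to \rho$ weak-$*$ and $u_n\to u$ uniformly, which gives both semicontinuities. Moreover $S_r$ is concave, being an infimum of affine maps. Set
\[
J_r(\zeta)=\inf\{I_r(\rho): S_r(\rho)\le\bar\sigma-\zeta\}.
\]
The upper bound in \eqref{eqn:ldp-principle-r} follows from the LDP because the set $\{S_r\le\bar\sigma-\zeta\}$ is closed. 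For the lower bound we need $J_r(\zeta)$ to coincide with the infimum over the open set $\{S_r<\bar\sigma-\zeta\}$. Given $\rho$ with $S_r(\rho)\le\bar\sigma-\zeta$ and $I_r(\rho)<\infty$, consider $\rho_t=(1-t)\rho+t(\theta_*+\eta)$, where $\eta>0$ is small enough that $\Lambda^*(\theta_*+\eta)<\infty$. Concavity of $S_r$ gives $S_r(\rho_t)<\bar\sigma-\zeta$. Convexity of $\Lambda^*$ gives $I_r(\rho_t)\le(1-t)I_r(\rho)+2rt\Lambda^*(\theta_*+\eta)$. Letting $t\to 0$ shows the two infima agree.

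Monotonicity of $J_r$ in $\zeta$ is immediate. Left-continuity of $J_r$ follows from the goodness of $I_r$ and the continuity of $S_r$: if $\zeta_n\uparrow\zeta$, minimizers $\rho_n$ have a weak-$*$ limit $\rho$. Lower semicontinuity of $I_r$ gives $I_r(\rho)\le\liminf I_r(\rho_n)$, and continuity of $S_r$ gives $S_r(\rho)\le\bar\sigma-\zeta$. To see that $J_r$ is nonincreasing in $r$, take a competitor on $[-r,r]$ and extend it to $[-r',r']$ by $\rho=\bar\theta$ outside, where $\Lambda^*(\bar\theta)=0$. Because $q$ is constant outside $(-1,1)$ by \eqref{A: triviality}, the competitor $u$ extends by $\pm1$, where $W=0$, so the energy is unchanged. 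Hence $J=\inf_r J_r$ is a limit as $r\to\infty$.

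The properties of $J$ are checked as follows.
\begin{itemize}
\item[(i)] $J(0)=0$ because $\rho\equiv\bar\theta$ is admissible. For continuity at $0$, fix a compactly supported optimal profile $u$, which exists by \eqref{A: triviality} and \eqref{eqn:bar-sigma}. Mixing $\rho=\bar\theta+t(\theta_*+\eta-\bar\theta)\mathbf 1_{[-R,R]}$ lowers the energy linearly in $t$ at cost $O(t^2R)$, since $\Lambda^*$ is quadratic near the mean.
\item[(ii)] For $\zeta<\bar\sigma-\sigma_*$, the profile $\rho\equiv\theta_*+\eta$ on a large interval, with $\eta$ small, reaches energy below $\bar\sigma-\zeta$ using \eqref{E: lower energy variational formula}, at finite cost. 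Conversely $S_r\ge\sigma_*$ always holds, so $J=\infty$ for $\zeta>\bar\sigma-\sigma_*$.
\item[(iii)] Positivity for $\zeta>0$: if $S_r(\rho)\le\bar\sigma-\zeta$ with minimizer $u$, then $\int(\bar\theta-\rho)W(u)\ge\zeta$. Also $W(u)\le C$ and $\int W(u)\le\bar\sigma/\theta_*$, uniformly in $r$. Jensen's inequality with respect to the finite measure $W(u)\,dx$, together with $\Lambda^*>0$ away from $\bar\theta$, then yields $J(\zeta)\ge c(\zeta)>0$. For strict monotonicity, mix a near-optimizer for $\zeta_2$ toward $\bar\theta$: $\rho_t=(1-t)\rho+t\bar\theta$. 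Concavity of $S_r$ and convexity of $\Lambda^*$ give $J((1-t)\zeta_2)\le(1-t)J(\zeta_2)$, so $J(\zeta_1)<J(\zeta_2)$ whenever $\zeta_1<\zeta_2$ and $J(\zeta_2)\in(0,\infty)$.
\end{itemize}

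The main obstacle is the left-continuity of $J$ itself, since an infimum over $r$ of left-continuous functions need not be left-continuous, and near-optimizers for $\zeta_n\uparrow\zeta$ may live on intervals $[-r_n,r_n]$ with $r_n\to\infty$. The approach I would try first is a localization step. Because $\int W(u)$ is bounded uniformly and $u$ solves an Euler--Lagrange equation that forces exponential convergence to $\pm1$ away from the interface, the region where $W(u)$ is non-negligible has length bounded independently of $n$. One can then replace $\rho$ by $\bar\theta$ outside a fixed window $[-R,R]$ at small energy cost, and redo the $\theta_*+\eta$ mixing inside $[-R,R]$ only. This costs $O(tR)$, which vanishes as $t\to0$ and reduces the problem to left-continuity of $J_R$ for a single $R$.
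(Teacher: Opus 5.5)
Your route differs from the paper's. You contract a weak-$*$ LDP for the field $\theta(\gamma^{-1}\cdot)$ through the concave, continuous map $S_r$. The LDP upper bound is then immediate, and everything rests on $\inf_{\{S_r<\bar\sigma-\zeta\}}I_r=\inf_{\{S_r\le\bar\sigma-\zeta\}}I_r$. The paper instead proves a Cram\'{e}r bound for each fixed $u\in\mathcal{U}_r$. Its lower bound is then free, since $u$ is a competitor, and its upper bound comes from a finite deterministic net of minimizers plus left-continuity of $J_r$.

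\textbf{Lower bound.} Your argument for the equality of infima has the inequality backwards: concavity gives $S_r(\rho_t)\ge(1-t)S_r(\rho)+tS_r(\theta_*+\eta)$. What you can use is the supergradient bound $S_r(\rho_t)\le S_r(\rho)-t\int(\rho-\theta_*-\eta)W(u)$, with $u$ a minimizer for $\rho$. This lowers the energy only if $\int(\rho-\theta_*-\eta)W(u)>0$, which your choice of $\eta$ does not ensure. One can take $\eta=\eta(\rho)$ when $\rho>\theta_*$ a.e., but the equality is genuinely false in one case. If $\Theta$ has an atom at $\theta_*$ and $\zeta=\bar\sigma-S_r(\theta_*)$ (positive for large $r$), then $\{S_r<\bar\sigma-\zeta\}=\emptyset$ while $J_r(\zeta)\le 2r\Lambda^*(\theta_*)<\infty$. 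That $\zeta$ needs a separate direct computation.

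\textbf{Strict monotonicity.} The same sign error breaks this step. The supergradient only gives $S_r((1-t)\rho+t\bar\theta)\le\bar\sigma-(1-t)\zeta_2+t(\bar{\mathscr{F}}(u;\mathbb{R})-\bar\sigma)$. Since $\bar{\mathscr{F}}(u;\mathbb{R})>\bar\sigma$ in general, $J((1-t)\zeta_2)\le(1-t)J(\zeta_2)$ does not follow. It is repairable with $\bar{\mathscr{F}}(u;\mathbb{R})\le(\bar\theta/\theta_*)\bar\sigma$. This gives $J(\zeta_1)\le\bigl(1-\tfrac{\zeta_2-\zeta_1}{\zeta_2+D}\bigr)J(\zeta_2)$ with $D=(\bar\theta/\theta_*-1)\bar\sigma$, which combined with your positivity argument suffices. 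The paper instead obtains $J_r(\zeta_2)\ge J_r(\zeta_1)+c(\zeta_2-\zeta_1)^2$, uniformly in $r$, from the strong convexity of each $J_u$ (Hoeffding's lemma), with no positivity needed.

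\textbf{Item (i).} This is not proved as written either. $\rho\equiv\bar\theta$ is not admissible at $\zeta=0$ on any finite $[-r,r]$. Indeed $S_r(\bar\theta)\ge\bar\sigma$, with strict inequality whenever the heteroclinic does not reach $\pm1$ in finite time (e.g.\ $W=(1-u^2)^2$). Also, \eqref{A: triviality} is a condition on the boundary datum $q$, not on the optimal profile, so the compactly supported optimal $u$ you invoke need not exist. With near-optimal $u_R\in\mathcal{U}_R$ instead, the $O(t^2R)$ cost must be balanced against $\bar{\mathscr{F}}(u_R;\mathbb{R})-\bar\sigma$. Perturbing $\rho$ only on a fixed set where $W(u_R)$ is bounded below removes the $R$-dependence; the paper instead shows that the optimal multipliers $\xi_n\to0$.

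\textbf{Left-continuity of $J$.} You are right that left-continuity of $J=\inf_r J_r$ does not follow from that of the $J_r$. The paper's written proof does not supply it either: it proves left-continuity only for $J_r$ (Proposition \ref{P: left-continuity}) and derives (iii) from Proposition \ref{cor:monotonicity-jr}, which gives strict monotonicity. But your localization sketch does not close this gap, for two reasons.
\begin{itemize}
\item The energy gained by mixing toward $\theta_*+\eta$ is $t\int(\rho_n-\theta_*-\eta)W(u_n)$, and you give no lower bound on it that is uniform in $n$.
\item The $O(tR)$ cost vanishes only if the localization error $\nu_0$ and the window $R(\nu_0)$ satisfy $\nu_0R(\nu_0)\to0$. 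This needs decay estimates you have not supplied. The energy bound of Lemma \ref{L: topology lemma} gives only $\nu_0R(\nu_0)=O(1)$ for $W=(1-u^2)^2$, and exponential decay fails for degenerate wells, which the hypotheses on $W$ allow.
\end{itemize}
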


Next, we exploit the large deviations principle to determine the asymptotics of $\sigma_{\epsilon,\delta}$ in the joint limit when $\epsilon^{-1} \delta \downarrow 0$ as $\epsilon \downarrow 0$.  The heuristic that follows explains how this is done and is a counterpart to the one discussed in \cite[Sections 3.5 \& 4.3]{part1}: Without loss of generality, we consider an interval $I = [-\varrho,\varrho]$ centered at zero.

Working at the mescoscopic scale 
\begin{align}\label{eqn:change-of-variables}
	R \coloneqq \epsilon^{ -1 } \varrho
	\quad \text{and} \quad
	\gamma \coloneqq \epsilon^{ - 1 } \delta(\epsilon) ,
\end{align}
and changing variables according to $y = \epsilon^{-1} x$, the formula \eqref{eqn:surface-tension} implies that 
	\begin{align*}
			\sigma_{\epsilon,\delta(\epsilon)}(1; [-\varrho,\varrho] ) = \sigma_{1,\gamma}(1;[-R,R]) .
	\end{align*}
In particular, this motivates the study of the variational problem
	\begin{align*}
		\sigma_{1,\gamma}(1;[-R,R]) &= \min \left\{ \int_{-R}^{R} \left( \frac{1}{2} |u'|^{2} + \theta(\gamma^{-1} y) W(u) \right) \, dy \, \mid \, u(\pm R) = q(\pm R) \right\} ,
	\end{align*}
in which, in the limit $\epsilon \searrow 0$, there is a competition between the fluctuations resulting from the increase in $R$ and the averaging effect induced by the decrease in $\gamma$.

Notice that \eqref{eqn:change-of-variables} allows us to write $\gamma = \gamma(R)$ and, in the new variables, the limit $\epsilon \searrow 0$ corresponds to sending $R \nearrow \infty$.
	
Since in the new variables the diffuse interface length scale is of order one, up to making a small error, we expect that
	\begin{align*}
		\mathbb{P} \left\{ \sigma_{1,\gamma}(1;[-R,R]) \leq \bar{\sigma} - \zeta \right\}
			&\approx \mathbb{P} \big( \bigcup_{k \in r \mathbb{Z} \cap [-R,R]} \left\{ \sigma_{1,\gamma}(1;[k-r,k+r]) \leq \bar{\sigma} - \zeta \right\} \big) ,
	\end{align*} 
where $ r \geq 1$ is arbitrary (but the error we make only becomes negligible as $ r \to \infty$). Since the random variables $\sigma_{1,\gamma}(1;[k-r,k+r])$ and $\sigma_{1,\gamma}(1;[j-r,j+r])$ are independent if $|k - j| > 2 ( r + \gamma )$, concentration arguments show that the righthand side above converges to $1$ if
	\begin{align*}
		\lim_{ R \uparrow \infty } \sum_{ k \in r \mathbb{Z} \cap [-R,R]} \mathbb{P} \left\{ \sigma_{1,\gamma(R)}(1;[k-r,k+r]) \leq \bar{\sigma} - \zeta \right\} = \infty .
	\end{align*}
Using Proposition \ref{P: large deviations}, this is essentially equivalent to asking that
	\begin{align*}
		\lim_{ R \uparrow \infty } R \exp ( - \frac{ J_{r}(\zeta) }{ \gamma(R) } ) = \infty .
	\end{align*}
This last condition motivates the hypothesis \eqref{E: log condition} in Theorem \ref{T: main deviations} because
	\begin{align}\label{eqn:limit-condition-ed-to-gr}
		\lim_{ \epsilon \downarrow 0 } \frac{ \delta ( \epsilon ) }{ \epsilon } | \log \epsilon | = J_{0}
		\quad \stackrel{(\ref{eqn:change-of-variables})}{\iff} \quad
		\lim_{ R \uparrow \infty } R \exp ( - \frac{ J }{ \gamma(R) } )  = \left\{ \begin{array}{r l}
            0, & \text{if} \, \, J > J_{0} , \\
            \infty , & \text{if} \, \, J < J_{0} .
        \end{array} \right.
	\end{align}

The next two propositions make this heuristic precise.

	\medskip

	\begin{prop} \label{P: lower deviations} Let $J = \inf J_{r}$ be the function from Proposition \ref{P: large deviations} and fix $\zeta \geq 0$.  If the scaling $\epsilon \mapsto \delta(\epsilon)$ is chosen in such a way that $\epsilon^{-1} \delta(\epsilon) \to 0$ as $\epsilon \searrow 0$ and 
		\begin{align}\label{E: assumption about delta}
			\liminf_{ \epsilon \downarrow 0} \frac{ \delta(\epsilon)  | \log \epsilon | }{ \epsilon }> J( \zeta ),
		\end{align}
	then, for any interval $ \varrho >  0 $,
		\begin{align*}
			\lim_{ \epsilon \downarrow 0 } \mathbb{P} \{ \sigma_{\epsilon,\delta(\epsilon)}(1; [ - \varrho , \varrho ] ) \leq \bar{\sigma} - \zeta \} = 1.
		\end{align*}
	\end{prop}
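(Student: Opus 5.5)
Rescale as in \eqref{eqn:change-of-variables}: with $R = \epsilon^{-1}\varrho$ and $\gamma = \epsilon^{-1}\delta(\epsilon)$ we have $\sigma_{\epsilon,\delta}(1;[-\varrho,\varrho]) = \sigma_{1,\gamma}(1;[-R,R])$. So it suffices to show that, with probability tending to one, some mesoscopic window of size $O(1)$ inside $[-R/2,R/2]$ carries a cheap interface. Fix $\eta>0$ so small that $\liminf \gamma|\log\epsilon| > J(\zeta)+2\eta$. Since $J=\inf_r J_r$, choose $r\ge1$ with $J_r(\zeta) < J(\zeta)+\eta$. If $J(\zeta)=\infty$ the hypothesis is vacuous, so assume $J(\zeta)<\infty$. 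We may assume \eqref{A: triviality} by Remark \ref{rmk: triviality}.

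\textbf{Step 1 (gluing).} Show deterministically that if $\sigma_{1,\gamma}(1;[k-r,k+r]) \le \bar\sigma-\zeta$ for some $k$ with $[k-r,k+r]\subset[-R+1,R-1]$, then $\sigma_{1,\gamma}(1;[-R,R]) \le \bar\sigma-\zeta$ plus a negligible error. The competitor takes the local minimizer on $[k-r,k+r]$, equal to $q(\cdot-k)$ near $k\pm r$, which is $\pm1$ there. Extend it by $-1$ on $[-R,k-r]$ and by $+1$ on $[k+r,R]$. This costs nothing extra, but the boundary data at $\pm R$ is $q(\pm R)=\pm1$, whereas the problem on $[-R,R]$ asks for $q$ centered at $0$; this agrees since $R\ge1$. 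Hence the inequality is exact, with no error, under \eqref{A: triviality}.

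\textbf{Step 2 (independence).} Take centers $k_j = j\,m$ with spacing $m = 2r+2$. Because $\gamma\to0$, the windows use disjoint checkerboard cells for $m > 2r+2\gamma$, so the events
\[
E_j = \{\sigma_{1,\gamma}(1;[k_j-r,k_j+r])\le\bar\sigma-\zeta\}
\]
are independent. By stationarity under $\gamma\mathbb{Z}$-shifts they are approximately identically distributed. To handle the non-integer offset $k_j/\gamma$, I would instead choose $k_j\in\gamma\mathbb{Z}$ within distance $\gamma$ of $jm$. There are $N\gtrsim R/m$ such windows in $[-R/2,R/2]$. Then
\[
\mathbb{P}\bigl(\textstyle\bigcup_j E_j\bigr) = 1-(1-p)^N \ge 1-\exp(-Np),
\]
where $p=\mathbb{P}\{\sigma_{1,\gamma}(1;[-r,r])\le\bar\sigma-\zeta\}$.

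\textbf{Step 3 (rates).} By \eqref{eqn:ldp-principle-r}, for small $\gamma$ we have $p \ge \exp(-(J_r(\zeta)+\eta)/\gamma) \ge \exp(-(J(\zeta)+2\eta)/\gamma)$. Then
\[
Np \gtrsim \varrho\,\epsilon^{-1}\exp(-(J(\zeta)+2\eta)/\gamma) = \varrho \exp\bigl(|\log\epsilon|\,(1-(J(\zeta)+2\eta)/(\gamma|\log\epsilon|))\bigr).
\]
Along the liminf, $\gamma|\log\epsilon|\ge J(\zeta)+3\eta'$ eventually, so the exponent is at least $c|\log\epsilon|\to\infty$. Hence $Np\to\infty$ and the probability tends to one.

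\textbf{Main obstacle.} The delicate point is the uniformity in the large deviations lower bound. The LDP \eqref{eqn:ldp-principle-r} is stated at a fixed $\zeta$ with the sharp inequality $\le\bar\sigma-\zeta$. If $J_r$ is only left-continuous, the required bound still follows directly from the limit statement at that $\zeta$, so no continuity is needed. The remaining subtlety is the lattice-offset issue: windows whose centers are not in $\gamma\mathbb{Z}$ have a shifted law. Choosing $k_j\in\gamma\mathbb{Z}$ resolves this exactly by stationarity under integer shifts of the checkerboard.
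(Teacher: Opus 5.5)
Your proposal is correct and follows the paper's proof: rescale to $\sigma_{1,\gamma}(1;[-R,R])$, use the exact monotonicity $\sigma_{1,\gamma}(1;[-R,R])\le\sigma_{1,\gamma}(1;[k-r,k+r])$ that holds under \eqref{A: triviality}, and apply the large deviations lower bound of Proposition \ref{P: large deviations} to $\sim R/r$ mesoscopic windows, whose count beats $e^{-J_r(\zeta)/\gamma}$ precisely because $\liminf \gamma|\log\epsilon| > J(\zeta)$. The differences are minor refinements: spacing the windows by $2r+2$ makes them exactly independent, so $1-(1-p)^N\ge 1-e^{-Np}$ replaces the paper's Chebyshev bound with ${\rm Var}[S]\lesssim\mathbb{E}[S]$; centering the windows on $\gamma\mathbb{Z}$ makes literally true the identical-law claim that the paper asserts for $k\in r\mathbb{Z}$; and your additive slack $J_r(\zeta)+\eta$ avoids the paper's multiplicative $(1+\nu)J_r(\zeta)$, which degenerates when $J_r(\zeta)=0$ (the stray $3\eta'$ in your Step 3 should simply be some $\kappa'>J(\zeta)+2\eta$ with $\gamma|\log\epsilon|\ge\kappa'$ for all small $\epsilon$).
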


	\begin{prop} \label{P: upper deviations} Let $J = \inf J_{r}$ be the function from Proposition \ref{P: large deviations} and fix $\zeta \geq 0$.  If the scaling $\epsilon \mapsto \delta(\epsilon)$ is chosen in such a way that $\epsilon^{-1} \delta(\epsilon) \to 0$ as $\epsilon \searrow 0$ and
		\begin{align}\label{eqn:lb-helper-minus-two}
			\limsup_{ \epsilon \downarrow 0 } \frac{ \delta(\epsilon) | \log \epsilon | }{ \epsilon }  < J( \zeta ),
		\end{align}
	then, for any $\varrho > 0$,
		\begin{align*}
			\lim_{ \epsilon \downarrow 0 } \mathbb{P} \{ \sigma_{\epsilon,\delta(\epsilon)}(1; [ - \varrho , \varrho ] ) \geq \bar{\sigma} - \zeta \} = 1.
		\end{align*}
	\end{prop}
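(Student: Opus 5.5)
We would prove Proposition \ref{P: upper deviations} by a union bound over mesoscopic windows, combined with a localization argument that pins any low-energy competitor's transition to a window of fixed size $r$. After the change of variables \eqref{eqn:change-of-variables}, we need to show that
\[
\mathbb{P}\{\sigma_{1,\gamma}(1;[-R,R]) < \bar\sigma - \zeta\} \to 0 \quad \text{as } R \uparrow \infty, \quad \gamma = \gamma(R).
\]
By \eqref{eqn:lb-helper-minus-two} and \eqref{eqn:limit-condition-ed-to-gr}, we have $R\exp(-J'/\gamma(R)) \to 0$ for some $J' < J(\zeta)$. Replacing $\zeta$ by $\zeta - \eta$ for small $\eta > 0$ loses nothing, because $J$ is strictly increasing and left-continuous on $[0,\bar\sigma - \sigma_*]$ by Proposition \ref{P: large deviations}(iii). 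So we may also assume $R \exp(-J(\zeta - \eta)'/\gamma) \to 0$ with room to spare. Since $J = \inf_r J_r$ and each $J_r \ge J$, every $r$ is admissible in the union bound below; in fact the inequality runs in the favourable direction.

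\textbf{Step 1 (localization).} Let $u$ be a minimizer on $[-R,R]$ with energy below $\bar\sigma - \zeta < \bar\sigma \le \theta^*$-type bounds. By the standard Modica--Mortola argument, $\int |u'|\sqrt{2\theta W(u)} \le \mathscr{F}$, and the energy is bounded. Fix small $\beta > 0$.

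\begin{itemize}
\item The set where $|u| \le 1-\beta$ has measure at most $C(\beta)$.
\item Because crossing from $-1+\beta$ to $1-\beta$ and back costs at least $2\sigma_W\sqrt{\theta_*}(1-o_\beta(1)) > \bar\sigma$, for $\theta_*$ not too small we get exactly one transition layer.
\end{itemize}

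This is where care is needed: in general $2\sigma_* > \bar\sigma$ may fail. We would instead use that any extra pair of crossings only increases energy, so a minimizer is monotone. This holds by the 1D rearrangement/truncation argument with boundary data $q$ monotone. A monotone minimizer with bounded energy has its transition (the set $\{|u|\le 1-\beta\}$) contained in an interval of length $C(\beta)$ around some point $k$. Outside this interval $u$ is within $\beta$ of $\pm 1$.

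\textbf{Step 2 (comparison with a local surface tension).} Choose $k \in r\mathbb{Z}\cap[-R,R]$ nearest to the transition, with $r \gg C(\beta)$. Modify $u$ near $k \pm r$ to take the values $q(\cdot - k)$, i.e.\ exactly $\pm 1$, using a linear interpolation on unit intervals where $|u \mp 1| \le \beta$. The cost of this modification is $O(\omega(\beta))$, with $\omega$ a modulus of continuity of $W$ near $\pm 1$. The energy in the rest of $[-R,R]$ is nonnegative, so it can be dropped. This gives, deterministically,
\[
\sigma_{1,\gamma}(1;[-R,R]) \ge \min_{k \in r\mathbb{Z}\cap[-R,R]} \sigma_{1,\gamma}(1;[k-r,k+r]) - o_\beta(1).
\]

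\textbf{Step 3 (union bound).} By stationarity, up to a shift by a multiple of $\gamma$, which costs $o(1)$ as $\gamma\to0$ via a comparable bound, each local probability is controlled by \eqref{eqn:ldp-principle-r}:
\[
\mathbb{P}\{\sigma \le \bar\sigma - \zeta + \eta\} \le \exp\bigl(-(J_r(\zeta - \eta) - \eta)/\gamma\bigr).
\]
Summing over the roughly $R/r$ values of $k$ gives a bound of order $R\exp(-(J(\zeta-\eta)-\eta)/\gamma) \to 0$, provided $\eta$ is chosen so that $J(\zeta-\eta) - \eta > \limsup \gamma\log R$. Such an $\eta$ exists by left-continuity of $J$ at $\zeta$ and the strict inequality \eqref{eqn:lb-helper-minus-two}.

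The main obstacle is Step 1/2: justifying monotonicity and the uniform localization of the transition with constants independent of $\gamma$ and of the random medium. We would handle it with the coarea/Modica--Mortola lower bound, which depends only on $\theta_* \le \theta$.
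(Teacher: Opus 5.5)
Your proof is correct and has the same architecture as the paper's. Both reduce $\sigma_{1,\gamma}(1;[-R,R])$ deterministically to the minimum of local surface tensions over $O(R/r)$ windows of fixed size, up to an error that left-continuity of $J$ absorbs. Both then apply a union bound using the upper half of \eqref{eqn:ldp-principle-r} together with $J_r\ge J$. The paper also records a variance bound for the sum $S$, but this direction only needs $\mathbb{P}\{S\ge 1\}\le\mathbb{E}[S]$, which is what you use.

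The real difference is the localization step.

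\emph{The paper's localization.} It uses no structure of the minimizer. Lemma \ref{L: topology lemma} gives one crossing interval $[a,b]$ of length at most $r_*$ on which $u$ goes from $-1+\nu$ to $1-\nu$ without leaving $(-1+\nu,1-\nu)$. The paper keeps $u$ only on $[a,b]$, interpolates linearly to $\pm1$ on the adjacent unit intervals, and discards the rest of the nonnegative energy. This yields \eqref{eqn:lb-helper} however $u$ behaves elsewhere.

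\emph{Your localization.} You rely instead on monotonicity of minimizers. This is true in 1D despite the $y$-dependence of $\theta$, but your justification does not establish it. Monotone rearrangement does not respect $\theta(\gamma^{-1}y)$, and ``extra crossings only increase energy'' is exactly the claim that needs proof.

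\emph{A correct argument for monotonicity.} Suppose $u<M:=\max_{[-R,\cdot]}u$ on a component $(a,b)$, so $M\equiv c$ there and $u(a)=u(b)=c$. Let $B=\min_{[a,b]}u=u(y_B)$, and let $s$ be the last time before $a$ with $u(s)=B$. Then $u([s,b])\subseteq[B,c]$. Pick $c^*\in\arg\min_{[B,c]}W$ and replace $u$ by the constant $c^*$ as follows:
\begin{itemize}
\item on $[s,y_B]$ if $c^*=B$;
\item on $[a,b]$ if $c^*=c$;
\item between crossings $t_1\in(s,a)$ and $t_2\in(a,y_B)$ of the level $c^*$ otherwise.
\end{itemize}
In each case the potential term does not increase and the gradient term strictly decreases, since $u$ is nonconstant on the replaced interval. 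With this lemma your Steps 1--2 go through. The paper's last-crossing construction makes the lemma unnecessary and also works for any bounded-energy competitor, not just minimizers.

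\emph{Stationarity in Step 3.} The lack of exact stationarity under shifts by $k\in r\mathbb{Z}$, which you flag, costs nothing. Choose $k'\in\gamma\mathbb{Z}$ with $|k-k'|\le\gamma$. Monotonicity of $\sigma_{1,\gamma}(1;\cdot)$ under enlarging windows, plus $\mathbb{Z}$-stationarity of the checkerboard, gives
$\mathbb{P}\{\sigma_{1,\gamma}(1;[k-r,k+r])\le\bar\sigma-\zeta+\eta\}\le\mathbb{P}\{\sigma_{1,\gamma}(1;[-2r,2r])\le\bar\sigma-\zeta+\eta\}$.
Since $J_{2r}\ge J$, the same large deviations bound applies.
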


	The proofs of these propositions are facilitated by the fact that the functions $\{J_{r}\}$ are strictly increasing.

	\begin{prop}[Monotonicity of $ J_{ r } $ and $ J $] \label{cor:monotonicity-jr}
	For any $r \geq 1$, the function $ J_{ r } $ from Proposition \ref{P: large deviations} is strictly increasing; more precisely there exists a constant $ C > 0 $ such that for any $ r \geq 1 $
		\begin{align*}
			J_{ r } ( \zeta_{2} ) \geq J_{ r } ( \zeta_{1} ) + C ( \zeta_{2} - \zeta_{1} )^{2} \quad \text{for each} ~ \zeta_{2} > \zeta_{1} .
		\end{align*}
	In particular the infimum $ J = \inf J_{r} $ is also strictly increasing.
\end{prop}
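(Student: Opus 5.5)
We prove the quadratic gap $J_r(\zeta_2)\ge J_r(\zeta_1)+C(\zeta_2-\zeta_1)^2$ by an \emph{exponential tilting / change of measure} argument at the level of the environment. The rare event $\{\sigma_{1,\gamma}(1;[-r,r])\le\bar\sigma-\zeta_2\}$ should be at least as costly as $\{\ldots\le\bar\sigma-\zeta_1\}$, plus the cost of an additional uniform lowering of the energy by $\zeta_2-\zeta_1$.

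The plan is to compare the two events through a deterministic perturbation estimate. First we show the following. If $\theta$ is any admissible configuration with $\sigma_{1,\gamma}(1;[-r,r])\le \bar\sigma-\zeta_2$, then its minimizer $u$ has a transition layer of order one near the center. The profile carries $\int W(u)\,dy\ge c_0>0$ over the relevant region, with $c_0$ independent of $r$ and $\gamma$, by the Modica--Mortola inequality $\int(\tfrac12|u'|^2+\theta_* W(u))\ge \sigma_*$ together with a bound on $\int|u'|^2$. Next, raising $\theta$ by $\eta$ on the cells where $u$ is in the transition region increases the energy by roughly $\eta\int W(u)$. So a configuration at level $\bar\sigma-\zeta_2$ can be converted into one at level $\bar\sigma-\zeta_1$ by increasing $\Theta_z$ on about $\gamma^{-1}\ell$ cells by an amount of order $\zeta_2-\zeta_1$. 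The converse direction, energy monotone in $\theta$, is trivial.

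To turn this into a probability bound we use the i.i.d.\ structure. Write the event as a set $A_\zeta$ in the product space of the $\Theta_z$ for the $\sim 2r/\gamma$ cells in $[-r,r]$. The surface tension is increasing and $1$-Lipschitz in a weighted $\ell^1$ sense in $(\Theta_z)$, since $\partial\sigma/\partial\Theta_z=\int_{\text{cell }z}W(u)$. We then apply a large-deviation/concentration inequality for i.i.d.\ bounded variables, either the Gärtner--Ellis representation $J_r(\zeta)=\sup_\lambda(\ldots)$ obtained from the proof of Proposition \ref{P: large deviations}, or Talagrand-type convex distance concentration. This gives
\[
\mathbb{P}(A_{\zeta_2})\le \mathbb{P}(A_{\zeta_1})\exp\bigl(-C(\zeta_2-\zeta_1)^2/\gamma\bigr),
\]
with the relevant weight vector having squared $\ell^2$ norm $\sum_z(\int_{\text{cell}}W(u))^2\lesssim \gamma\int W(u)\lesssim\gamma$. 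Taking $\gamma\log$ and $\gamma\downarrow0$ yields the claim. An alternative that may be cleaner is to use the variational formula for $J_r$ inherited from Proposition \ref{P: large deviations}: $J_r(\zeta)=\inf\{\int_{-r}^r \Lambda^*(\mu(y))\,dy\,:\,\mu$ a profile of local laws/means with $\sigma(\mu)\le\bar\sigma-\zeta\}$, where $\Lambda^*$ is the Cramér rate of $\Theta$, which is uniformly convex near the mean with curvature bounded below on $[\theta_*,\theta^*]$. Then the perturbation $\mu\mapsto\mu+\eta\,\mathbf 1_{\{\text{transition region}\}}$ combined with the strong convexity of $\Lambda^*$ gives the quadratic increment directly.

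The main obstacle is the uniformity of $C$ in $r$. The transition region must have bounded length independent of $r$, and the perturbation must not leak cost from the tails where $u\approx\pm1$. We will handle this by localizing to the set where $|u|\le 1-\tau$, whose measure is bounded by $C\sigma/\min_{|s|\le1-\tau}W$. Strict monotonicity of $J=\inf_r J_r$ then follows by passing to the infimum, because the inequality holds with the same $C$ for every $r$.
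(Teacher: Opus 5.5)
Your main argument has a genuine gap. It rests on the relative bound $\mathbb{P}(A_{\zeta_2})\le\mathbb{P}(A_{\zeta_1})\exp(-C(\zeta_2-\zeta_1)^2/\gamma)$. After taking $\gamma\log$ and using Proposition \ref{P: large deviations}, this bound is exactly the statement to be proved, and neither tool you cite delivers it.

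\emph{Talagrand.} Convex-distance concentration only separates a rare event from the typical set. It gives $\mathbb{P}(A_{\zeta_2})\,\mathbb{P}\{\sigma_{1,\gamma}(1;[-r,r])\ge\bar\sigma-\zeta_1\}\le e^{-c(\zeta_2-\zeta_1)^2/\gamma}$. The second factor is of order one, so this yields only $J_r(\zeta_2)\gtrsim(\zeta_2-\zeta_1)^2$. It gives no increment over $J_r(\zeta_1)$ when $J_r(\zeta_1)>0$.

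\emph{G\"{a}rtner--Ellis.} The proof of Proposition \ref{P: large deviations} does not represent $J_r$ as a single Legendre transform. Since $\sigma_{1,\gamma}$ is a minimum over $u$ of functionals linear in $\theta$, what it gives is $J_r=\inf_{u\in\mathcal{U}_r}J_u$. An infimum of convex functions carries no convexity from which to extract an increment.

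\emph{Change of measure.} Raising $\Theta_z$ on $\sim\gamma^{-1}\ell$ cells is not even defined for general laws, e.g.\ two-valued $\Theta_z$, so there is no density ratio to control.

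The missing idea is to prove the increment for each fixed $u$, with a constant independent of $u$, and only then take infima. Unlike convexity, the inequality $J_u(\zeta_2)\ge J_u(\zeta_1)+c(\zeta_2-\zeta_1)^2$ passes to $\inf_u$ and then to $\inf_r$.

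\emph{The paper's route (dual side).} $J_u$ is the Legendre transform of $f(\xi)=\xi(\bar{\mathscr{F}}(u;\mathbb{R})-\bar\sigma)+\int\Lambda(\xi W(u))\,dy$. Hoeffding's bound $\Lambda''\le K:=(\frac{\theta^*-\theta_*}{2})^2$ (Lemma \ref{lem:properties-lambda}) gives $f''\le K\int W(u)^2\,dy$, so $J_u$ is strongly convex. Together with monotonicity of $J_u$, this gives the increment with constant $(2K\int W(u)^2\,dy)^{-1}$. Uniformity needs no transition-layer analysis: $J_u\equiv\infty$ unless $\mathscr{F}_{\theta_*}(u;\mathbb{R})\le\bar\sigma$, and in that case $\int W(u)^2\,dy\le\theta_*^{-1}\|W\|_{L^\infty}\bar\sigma$.

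\emph{Your alternative.} The $\int\Lambda^*$ route is the primal form of this argument and can be completed, but not with the perturbation you propose. Raising $\mu$ by $\eta\mathbf{1}_{\{\text{transition region}\}}$ with $\eta\sim(\zeta_2-\zeta_1)/\int_{\text{region}}W(u)$ can overshoot $\bar\theta$. There $\Lambda^*$ increases again, so strong convexity alone gives no decrease in cost. A version that works:
\begin{itemize}
\item Fix $u$ and reduce to $\mu\le\bar\theta$.
\item Set $\delta\mu=\min\{\eta W(u),\bar\theta-\mu\}$, choosing $\eta$ so that $\int\delta\mu\,W(u)\,dy=\zeta_2-\zeta_1$. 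This is possible because $\int(\bar\theta-\mu)W(u)\,dy\ge\zeta_2$.
\item Strong convexity of $\Lambda^*$ (curvature at least $K^{-1}$) and Cauchy--Schwarz then lower the cost by at least $(\zeta_2-\zeta_1)^2/(2K\int W(u)^2\,dy)$, the same constant as in the paper.
\end{itemize}
You would also need to justify the primal formula itself, for instance by Lagrange duality for each fixed $u$, starting from $J_r=\inf_u J_u$.
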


\subsection{The Large Deviations Principle and Properties of $J$} \label{S: large deviations explained} The large deviations principle, Proposition \ref{P: large deviations}, follows from standard techniques from the theory of large deviations.  The main argument is sketched next.  First, let $\Lambda$ be the (centered) logarithmic moment generating function of $\theta(x)$ for any given $x$, that is,
	\begin{align} \label{E: moment generating function}
		\Lambda(\xi) = \log \mathbb{E} [ \exp ( \xi \{ \theta(0) - \bar{\theta} \} ) ],
	\end{align}
where we can fix $x = 0$ by stationarity. To motivate the definition of $J_{r}$, we recall the role of $\Lambda$ in the large deviations principle for spatial averages of $\theta$, called Cram\'{e}r's Theorem (see \cite{dembo_zeitouni,Rassoul-Agha,varadhan}): For any $\zeta > 0$ and any $r > 0$,
	\begin{align*}
		\lim_{ \gamma \downarrow 0 } \gamma \log \mathbb{P} \left\{ \fint_{-r}^{r} \theta( \gamma^{-1} y) \, dy \leq \bar{\theta} - \zeta \right\} = -\sup \left\{ \xi \zeta - \Lambda(\xi) \, \mid \, \xi \leq 0 \right\}.
	\end{align*}

Similarly, for any sufficiently nice $u : \mathbb{R} \to [-1,1]$, it is possible to prove that
	\begin{align*}
		\lim_{ \gamma \downarrow 0 } \gamma \log \mathbb{P} \left\{ \mathscr{F}_{1,\gamma}(u; \mathbb{R}) \leq \bar{\sigma} - \zeta \right\} = -J_{u}(\zeta),
	\end{align*}
where $J_{u} : [0,\infty) \to [0,\infty]$ is the function given by 
	\begin{align} \label{E: Ju function}
		J_{u}(\zeta) = \sup \left\{ - \xi ( \zeta + \bar{\mathscr{F}}(u;\mathbb{R}) - \bar{\sigma} ) - \int_{-\infty}^{\infty} \Lambda( \xi W(u) ) \, dy \, \mid \, \xi \leq 0 \right\}.
	\end{align}
On the other hand, if $r \geq 1$ and $u$ is chosen so that $u(x) = -1$ for $x \leq -r$ and $u(x) = 1$ for $x \geq r$, then the assumption \eqref{A: triviality} on the boundary condition $q$ implies that $u$ is a competitor for $\sigma_{1,\gamma}$, and, thus,
	\begin{align*}
		\liminf_{ \gamma \downarrow 0 } \gamma \log \mathbb{P} \left\{ \sigma_{1,\gamma}(1,[-r,r]) \leq \bar{\sigma} - \zeta \right \} \geq \lim_{ \gamma \downarrow 0 } \gamma \log \mathbb{P} \left\{ \mathscr{F}_{1,\gamma}(u;\mathbb{R}) \leq \bar{\sigma} - \zeta \right\} = - J_{u}(\zeta).
	\end{align*}
In particular, after optimizing over $u$, this implies
	\begin{gather}
		\liminf_{ \gamma \downarrow 0 } \gamma \log \mathbb{P} \left\{ \sigma_{1,\gamma}(1,[-r,r]) \leq \bar{\sigma} - \zeta \right \} \geq - J_{r}(\zeta), \nonumber 
	\end{gather}
	where	%
	\begin{gather}
		J_{r}(\zeta) = \min \{ J_{u}(\zeta) \, \mid \, -1 \leq u \leq 1, \, \, \mathscr{F}_{\theta_{*}}(u;\mathbb{R}) < \infty, \, \, u(x) = \text{sgn}(x) \, \, \text{if} \, \, |x| \geq r \}. \label{E: definition of Jr}
	\end{gather}
If the set of functions $u$ we were minimizing over were finite, it would not be hard to see that the corresponding upper bound on the $\limsup$ also holds.  The proof of Proposition \ref{P: large deviations} uses compactness  to show that this is indeed the case.

The arguments that follow will require some properties of the moment generating function $\Lambda$ and the rate functions $\{ J_{r} \}$.  First, we use the fact that the boundedness of $\theta$ implies that $\Lambda$ has a bounded second derivative.

\begin{lemma}\label{lem:properties-lambda}
	If $ \Lambda $ is given by \eqref{E: moment generating function}, then
	\begin{align*}
		\xi \mapsto \Lambda ( \xi ) ~ \text{is strictly convex},
		\quad \text{and} \quad
		\xi \mapsto \Lambda ( \xi ) - \Big( \frac{ \theta^* - \theta_* }{ 2 } \Big)^2 \frac{ \xi^2 }{ 2 } ~ \text{is concave}.
	\end{align*}
	Moreover, we have $ \Lambda ( 0 ) = \Lambda ' ( 0 ) = 0 $ and thus
	\begin{align*}
		\Lambda ( \xi ) \leq \Big( \frac{ \theta^* - \theta_* }{ 2 } \Big)^2 \frac{ \xi^2 }{ 2 } .
	\end{align*}
\end{lemma}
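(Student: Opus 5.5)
We regard $\Lambda$ as the cumulant generating function of the bounded random variable $X = \theta(0) - \bar{\theta}$, and we argue through its derivatives. Because $X$ is bounded, $\mathbb{E}[e^{\xi X}]$ is finite for every real $\xi$, and we may differentiate under the expectation. This shows $\Lambda \in C^{\infty}(\mathbb{R})$. Clearly $\Lambda(0) = \log 1 = 0$, and $\Lambda'(0) = \mathbb{E}[X] = 0$ because $\bar{\theta} = \mathbb{E}[\theta(0)]$.

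For the convexity statements we introduce, for each $\xi$, the tilted probability measure $d\mathbb{P}_{\xi} = e^{\xi X - \Lambda(\xi)} \, d\mathbb{P}$. A direct computation gives $\Lambda'(\xi) = \mathbb{E}_{\xi}[X]$ and $\Lambda''(\xi) = \mathbb{E}_{\xi}[X^{2}] - \mathbb{E}_{\xi}[X]^{2} = \mathrm{Var}_{\xi}(X)$. Since $\mathbb{P}_{\xi}$ and $\mathbb{P}$ are mutually absolutely continuous, $X$ is nonconstant under $\mathbb{P}_{\xi}$ exactly when it is nonconstant under $\mathbb{P}$. In the setting of Theorem \ref{T: main deviations} the $\Theta_{z}$ are nonconstant, so $\Lambda'' > 0$ everywhere and $\Lambda$ is strictly convex.

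For the upper bound on the curvature we use that $X$ takes values, $\mathbb{P}_{\xi}$-a.s., in an interval of length $\theta^{*} - \theta_{*}$. For any random variable $Y$ with values in $[m, M]$ we have
\[
\mathrm{Var}(Y) \leq \mathbb{E}\Big[\Big(Y - \tfrac{m+M}{2}\Big)^{2}\Big] \leq \Big(\tfrac{M-m}{2}\Big)^{2}.
\]
This is Popoviciu's inequality; the first step holds because the mean minimizes the quadratic deviation. It follows that $\Lambda''(\xi) \leq \big(\tfrac{\theta^{*} - \theta_{*}}{2}\big)^{2}$ for all $\xi$, so the second derivative of $\Lambda(\xi) - \big(\tfrac{\theta^{*}-\theta_{*}}{2}\big)^{2} \tfrac{\xi^{2}}{2}$ is nonpositive and this function is concave.

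For the final inequality, set $g(\xi) = \Lambda(\xi) - \big(\tfrac{\theta^{*}-\theta_{*}}{2}\big)^{2} \tfrac{\xi^{2}}{2}$. Then $g$ is concave with $g(0) = g'(0) = 0$, so it lies below its tangent line at $0$, which is identically zero. Hence $g \leq 0$, and this is the claimed bound; it is Hoeffding's lemma. The only step requiring care is justifying differentiation under the expectation, and boundedness of $\theta$ makes this routine.
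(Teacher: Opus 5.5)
Your proof is correct and follows essentially the same route as the paper: you identify $\Lambda''$ as the variance of $\theta(0)-\bar\theta$ under the tilted measure, use nonconstancy to get $\Lambda''>0$, and use the variational characterization of the variance (centering at the midpoint) to get $\Lambda'' \le \big(\frac{\theta^*-\theta_*}{2}\big)^2$, then conclude via the tangent line at $0$ of the concave function. Your observation that the tilted measure is mutually absolutely continuous with $\mathbb{P}$ makes explicit the nondegeneracy step that the paper leaves implicit.
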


\begin{proof}  This is all standard; we give some details for the reader's convenience.  Since $\theta(0)$ is bounded, we can differentiate to find
	\begin{align}
		\Lambda ' (\xi) &= \frac{ \mathbb{E} [ ( \theta(0) - \bar{\theta}  ) \exp ( \xi \{ \theta(0) - \bar{\theta} \} ) ] }{ \mathbb{E} [ \exp ( \xi \{ \theta(0) - \bar{\theta} \} ) ] } , \label{E: derivative formula} \\
		\Lambda '' (\xi) &= \frac{ \mathbb{E} [ ( \theta(0) - \bar{\theta}  )^2 \exp ( \xi \{ \theta(0) - \bar{\theta} \} ) ] }{ \mathbb{E} [ \exp ( \xi \{ \theta(0) - \bar{\theta} \} ) ] } - \Big( \frac{ \mathbb{E} [ ( \theta(0) - \bar{\theta}  ) \exp ( \xi \{ \theta(0) - \bar{\theta} \} ) ]  }{ \mathbb{E} [ \exp ( \xi \{ \theta(0) - \bar{\theta} \} ) ] } \Big)^2 . \nonumber
	\end{align}
	Notice that $ \Lambda '' ( \xi ) $ is the variance of $ \theta(0) - \bar\theta $ w.r.t.~to the tilted probability measure with density proportional to $ \exp ( \xi \{ \theta(0) - \bar{\theta} \} ) $. In view of the fact that $\theta$ is nonconstant, the bound $ \theta_* \leq \theta( 0 ) \leq \theta^* $, and the variational characterization of the variance $ { \rm Var } [ X ] = \inf_{ c } \E [ ( X - c )^2 ] $, we have	
	\begin{align*}
		0 < \Lambda '' ( \xi ) \leq \Big( \frac{ \theta^* - \theta_* }{ 2 } \Big)^2 .
	\end{align*}
The upper bound, sometimes called Hoeffding's Lemma, is well-known in the literature; see \cite[Lemma 2.2]{boucheron_lugosi_massart} or \cite[Section 2.4.1]{dembo_zeitouni}.

	In particular, we learn from the lower bound that $ \Lambda $ is strictly convex, and from the upper bound, that $ \Lambda ( \xi ) -  \big( \frac{ \theta^* - \theta_* }{ 2 } \big)^2 \frac{ \xi^2 }{ 2 } $ is concave. Moreover, the formulas \eqref{E: moment generating function} and \eqref{E: derivative formula} imply that $ \Lambda ( 0 ) = \Lambda ' ( 0 ) = 0  $ and thus by concavity
	\begin{align*}
		\Lambda ( \xi ) - \Big( \frac{ \theta^* - \theta_* }{ 2 } \Big)^2 \frac{ \xi^2 }{ 2 } \leq \Lambda ( 0 ) + \Lambda ' ( 0 ) \xi = 0 ,
	\end{align*}
	as claimed.
\end{proof}

As a consequence of the Laplace principle (in the terminology of statistical mechanics), we learn that the measure with density $ \exp ( \xi \{ \theta(0) - \bar{\theta} \} ) $ concentrates near the extremes $ \theta^* - \bar{\theta} $ and $ \theta_{*} - \bar{\theta} $ so that, by \eqref{E: derivative formula},
	\begin{align} \label{E: asymptotics derivative lambda}
		\Lambda ' ( \xi ) \rightarrow \theta^* - \bar{\theta} \quad \text{as} \quad \xi \nearrow \infty
		\quad \text{and} \quad
		\Lambda ' ( \xi ) \rightarrow \theta_{*} - \bar{\theta} \quad \text{as} \quad \xi \searrow -\infty .
	\end{align}
In particular, by monotonicity of $\Lambda'$ (recall that $ \Lambda $ is convex),
	\begin{align*}
		\theta_{*} - \bar{\theta} \leq \Lambda'(\xi) \leq \theta^{*} - \bar{\theta}.
	\end{align*}

Finally, the next lemma gives a qualitative description of the set on which $J_{u}$ is finite for a given $u$.
	
\begin{lemma}[Properties of $ J_{ u } $] \label{L: finiteness trick} Fix a $u : \mathbb{R} \to [-1,1]$ such that $\mathscr{F}_{\theta_{*}}(u;\mathbb{R}) < \infty$ and for which the function $x \mapsto u(x) \text{sgn}(x) - 1$ has compact support.  Let $ J_{ u } $ be the function defined by \eqref{E: definition of Ju}.  Then for any $\zeta > 0$, $J_{u}$ is finite in the set $[0,\zeta)$ if and only if
		\begin{align*}
			\mathscr{F}_{\theta_{*}}(u;\mathbb{R}) \leq \bar{\sigma} - \zeta.
		\end{align*}
	\end{lemma}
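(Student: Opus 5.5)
We study the function $\xi \mapsto g(\xi) := -\xi(\zeta + \bar{\mathscr{F}}(u;\mathbb{R}) - \bar{\sigma}) - \int \Lambda(\xi W(u))\,dy$ on $\xi \leq 0$, since $J_u(\zeta) = \sup_{\xi \leq 0} g(\xi)$. Finiteness of $J_u$ is governed by the behavior of $g$ as $\xi \to -\infty$, because $g$ is continuous on $(-\infty,0]$. Write $\xi = -t$ with $t \geq 0$.

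\textbf{Asymptotics of the integral term.} The integrand is supported on the compact set where $W(u) \neq 0$; there $u \neq \pm 1$. Outside a compact set $u(x)\,\text{sgn}(x) = 1$, so $u = \pm 1$ and $W(u)=0$. By \eqref{E: asymptotics derivative lambda} and $\Lambda(0)=0$, we have $\Lambda(s) = (\theta_* - \bar\theta)s + o(|s|)$ as $s \to -\infty$. More precisely, $s \mapsto \Lambda(s) - (\theta_*-\bar\theta)s$ is nonincreasing for $s \le 0$, because $\Lambda' \geq \theta_* - \bar\theta$. This function is $\geq 0$, and its value divided by $|s|$ tends to $0$.

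Dominated convergence then gives
\[
\frac{1}{t}\int \Lambda(-tW(u))\,dy \to -(\theta_*-\bar\theta)\int W(u)\,dy ,
\]
with domination by $|\Lambda(s)| \leq C|s|$, using $\int W(u) < \infty$ from $\mathscr{F}_{\theta_*}(u) < \infty$. We also get the one-sided bound $\int \Lambda(-tW(u)) \geq t(\bar\theta-\theta_*)\int W(u)$.

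Using $\bar{\mathscr{F}}(u) - (\bar\theta-\theta_*)\int W(u) = \mathscr{F}_{\theta_*}(u)$, these combine to
\[
g(-t) = t\bigl(\zeta + \mathscr{F}_{\theta_*}(u) - \bar\sigma\bigr) - \Bigl[\int \Lambda(-tW(u)) - t(\bar\theta-\theta_*)\int W(u)\Bigr],
\]
where the bracket is $\geq 0$ and is $o(t)$.

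\textbf{The two directions.}

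First suppose $\mathscr{F}_{\theta_*}(u) \leq \bar\sigma - \zeta$, and take $\zeta' < \zeta$. Then $g(-t) \leq -t(\zeta - \zeta') - 0 \to -\infty$. Since $g$ is continuous, the supremum over $t \ge 0$ is finite, so $J_u(\zeta') < \infty$. This gives finiteness on $[0,\zeta)$.

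Conversely, suppose $\mathscr{F}_{\theta_*}(u) > \bar\sigma - \zeta$. Choose $\zeta' < \zeta$ close enough to $\zeta$ that $\zeta' + \mathscr{F}_{\theta_*}(u) - \bar\sigma = \eta > 0$. Then $g(-t) \geq t\eta - o(t) \to \infty$, so $J_u(\zeta') = \infty$ with $\zeta' \in [0,\zeta)$.

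\textbf{Main obstacle.} The delicate point is the $o(t)$ control of the bracket, uniformly over the integral. This is handled by the monotone and dominated convergence argument above, together with integrability of $W(u)$. One should also note the degenerate case where $W(u) = 0$ almost everywhere, which cannot occur given $u(\pm\infty) = \pm1$ and continuity.
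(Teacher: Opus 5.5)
Your strategy works, but one auxiliary claim has the wrong sign. The bracket $B(t)=\int \Lambda(-tW(u))\,dy - t(\bar\theta-\theta_*)\int W(u)\,dy$ is $\le 0$, not $\ge 0$. For $s\le 0$ we have $s(\theta(0)-\bar\theta)\le s(\theta_*-\bar\theta)$, hence $\Lambda(s)\le(\theta_*-\bar\theta)s$. Equivalently, $h(s)=\Lambda(s)-(\theta_*-\bar\theta)s$ satisfies $h'=\Lambda'-(\theta_*-\bar\theta)\ge 0$ and $h(0)=0$. So $h$ is nondecreasing and nonpositive on $(-\infty,0]$, not nonincreasing and nonnegative. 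For example, if $\theta(0)-\bar\theta=\pm1$ with equal probability, then $h(s)=\log\cosh s+s\to-\log 2$.

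Consequently your ``one-sided bound'' is reversed. The step $g(-t)\le -t(\zeta-\zeta')-0$ in the sufficiency direction is therefore unjustified as written. The repair uses what you already proved: $g(-t)\le -t(\zeta-\zeta')+|B(t)|$, and $|B(t)|=o(t)$ by your dominated-convergence limit. So $g(-t)\to-\infty$ anyway, and continuity gives $J_u(\zeta')<\infty$. In the necessity direction the correct sign only helps, since it gives $g(-t)\ge t\eta$ outright.

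With that fix, your proof is correct, and it takes a somewhat different route from the paper.

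The paper differentiates, writing $g_u'(\xi)=\bar\sigma-\zeta-\tilde\sigma_u(\xi)$. Here $\tilde\sigma_u$ is nondecreasing and continuous, with $\tilde\sigma_u(0)=\bar{\mathscr F}(u;\mathbb R)\ge\bar\sigma$ and $\tilde\sigma_u(-\infty)=\mathscr F_{\theta_*}(u;\mathbb R)$. The infinite case follows from the negative slope at $-\infty$. The finite case uses the intermediate value theorem to produce $\xi_*$ with $g_u'(\xi_*)=0$, which is a maximizer by concavity.

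You instead read off the same asymptotic slope, $\lim_{t\to\infty}g(-t)/t=\zeta+\mathscr F_{\theta_*}(u;\mathbb R)-\bar\sigma$, directly from $\Lambda(s)/s\to\theta_*-\bar\theta$. This needs neither concavity nor the comparison $\bar{\mathscr F}(u;\mathbb R)\ge\bar\sigma$. The price is that you get no attained maximizer satisfying the first-order condition $t+\bar{\mathscr F}(u;\mathbb R)-\bar\sigma=-\int\Lambda'(\xi_*W(u))W(u)\,dy$. The paper uses exactly this (``by Lemma \ref{L: finiteness trick} and its proof'') when proving property (i) of Proposition \ref{P: large deviations}. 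So that later step would still need the paper's intermediate-value argument.
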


\begin{proof} Recall that, for any $\zeta \geq 0$, $J_{u}(\zeta) = \sup_{\xi \leq 0} g_{u}(\xi)$, where $g_{u}$ is given by
    \begin{equation*}
        g_{u}(\xi) = - \xi ( \zeta + \bar{\mathscr{F}}(u;\mathbb{R}) - \bar{\sigma} ) - \int_{-\infty}^{\infty} \Lambda(\xi W(u) ) \, dy.
    \end{equation*}
Since $\Lambda$ is convex and $W$ is nonnegative, $g_{u}$ is concave.
Notice that 
    \begin{align} \label{E: derivative}
       \frac{d}{d\xi} g_{u}(\xi) &= \bar{\sigma} - \zeta - \bar{\mathscr{F}}(u;\mathbb{R}) - \int_{-\infty}^{\infty} \Lambda'(\xi W(u)) W(u) \, dy \\
        &= \bar{\sigma} - \zeta - \tilde{\sigma}_{u}(\xi), \nonumber
    \end{align}
provided $\tilde{\sigma}_{u}$ is defined by
    \begin{align*}
        \tilde{\sigma}_{u}(\xi) = \int_{-\infty}^{\infty} \left( \frac{1}{2} |u'|^{2} + \{ \bar{\theta} + \Lambda'(\xi W(u) ) \} W(u) \right) \, dy.
    \end{align*}   
By the monotonicity of $\Lambda'$, $\tilde{\sigma}_{u}$ is nondecreasing.  Further, \eqref{E: asymptotics derivative lambda} implies
			\begin{align*}
				 \tilde{\sigma}(0) = \bar{\mathscr{F}}(u;\mathbb{R}) \geq \bar{\sigma}, \quad \lim_{\xi \to -\infty} \tilde{\sigma}(\xi) = \mathscr{F}_{\theta_{*}}(u;\mathbb{R}).
			\end{align*}
        Thus, if $\mathscr{F}_{\theta_{*}}(u;\mathbb{R}) > \bar{\sigma} - \zeta$, then \eqref{E: derivative} implies that $g_{u}(\xi) \to \infty$ as $\xi \to -\infty$, so $J_{u}(\zeta) = \infty$ in that case.  Conversely, suppose that $\mathscr{F}_{\theta_{*}}(u;\mathbb{R}) \leq \bar{\sigma} - \zeta$.  Let $t \in [0,\zeta)$.  Since $\mathscr{F}_{\theta_{*}}(u;\mathbb{R}) < \bar{\sigma} - t$, by continuity of $\tilde{\sigma}$, there is a $\xi_{*} \in (-\infty,0]$ such that $\tilde{\sigma}_{u}(\xi_{*}) = \bar{\sigma} - t$.  In view of \eqref{E: derivative} and concavity, this implies $\xi_{*}$ maximizes $g_{u}$.  In particular, $J_{u}(t) = g_{u}(\xi_{*}) < \infty$.
\end{proof}

\subsection{Proof of the Large Deviations Principle.} Before going into the proof, the next lemma gives the precise statement of the large deviations principle for $\mathscr{F}_{1,\gamma}(u;\mathbb{R})$ for a fixed $u$.  Here and henceforth, we use $\mathcal{U}_{r}$ to denote the relevant set of functions:
	\begin{align*}
		\mathcal{U}_{r} = \{ u : \mathbb{R} \to [-1,1] \, \mid \, \mathscr{F}_{\theta_{*}}(u;\mathbb{R}) <\infty, \quad u(x) = \text{sgn}(x) \quad \text{if} \, \, |x| \geq r \}
	\end{align*}

\begin{lemma} \label{L: basic LDP} For any $r > 0$, any $u \in \mathcal{U}_{r}$, and any $\zeta \geq 0$, 
		\begin{align} \label{E: LDP most basic form}
			\lim_{\gamma \downarrow 0} \gamma \log \mathbb{P} \{ \mathscr{F}_{1,\gamma}(u; \mathbb{R}) \leq \bar{\sigma} - \zeta \} = -J_{u}(\zeta),
		\end{align}
	where $J_{u} : [0,\infty) \to [0,\infty]$ is the nondecreasing function given by 
		\begin{align}\label{E: definition of Ju}
			J_{u}(\zeta) = \sup \left\{ - \xi ( \zeta + \bar{\mathscr{F}}(u; \mathbb{R}) - \bar{\sigma}) - \int_{-\infty}^{\infty} \Lambda ( \xi W(u) ) \, dy \, \mid \, \xi \leq 0 \right\}.
		\end{align}
	In particular, for any finite subset $\mathcal{U}_{ \rm f } $ of $ \cup_{ r > 0 } \, \mathcal{U}_{r}$,
		\begin{align*}
			\lim_{ \gamma \downarrow 0 } \gamma \log \mathbb{P} \{ \mathscr{F}_{1,\gamma} ( u ; \mathbb{R} ) \leq \bar{\sigma} - \zeta \, \, \text{for some} \, \, u \in \mathcal{U}_{ \rm f } \} = - \min \{ J_{u} \, \mid \, u \in \mathcal{U}_{ \rm f } \}.
		\end{align*}
	\end{lemma}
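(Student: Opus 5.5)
The approach is to reduce the statement to Cramér's theorem for a weighted sum of i.i.d.\ bounded random variables. Fix $u \in \mathcal{U}_{r}$. Since $a \equiv 1$, the gradient term is deterministic, and we can write
\begin{align*}
	\mathscr{F}_{1,\gamma}(u;\mathbb{R}) - \bar{\mathscr{F}}(u;\mathbb{R}) = \sum_{z \in \mathbb{Z}} (\Theta_{z} - \bar{\theta}) w_{z}^{\gamma}, \qquad w_{z}^{\gamma} = \int_{\gamma(z - 1/2)}^{\gamma(z+1/2)} W(u(y)) \, dy .
\end{align*}
This sum is finite, because $W(u) = 0$ outside $[-r,r]$. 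The event in question is therefore $\{ S_{\gamma} \leq -(\zeta + \bar{\mathscr{F}}(u;\mathbb{R}) - \bar{\sigma}) \}$, where $S_{\gamma} = \sum_{z} (\Theta_{z} - \bar{\theta}) w^{\gamma}_{z}$. I would apply the Gärtner--Ellis theorem at speed $\gamma^{-1}$. By independence, the scaled cumulant generating function is
\begin{align*}
	\gamma \log \mathbb{E}[\exp(\gamma^{-1}\xi S_{\gamma})] = \gamma \sum_{z} \Lambda(\gamma^{-1} \xi w_{z}^{\gamma}).
\end{align*}
Since $\gamma^{-1} w_{z}^{\gamma}$ is the average of $W(u)$ over the cell $\gamma(z + (-1/2,1/2))$, this expression is a Riemann-type sum for $\int \Lambda(\xi W(u)) \, dy$. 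Because $W(u)$ is bounded and supported in $[-r,r]$, $\Lambda$ is locally Lipschitz, and averages of $W \circ u$ over shrinking cells converge to $W \circ u$ in $L^{1}$ (Lebesgue differentiation, or just continuity of $u$, which follows from $\mathscr{F}_{\theta_{*}}(u;\mathbb{R}) < \infty$), the limit exists for every $\xi \in \mathbb{R}$. It equals $\Phi(\xi) := \int \Lambda(\xi W(u)) \, dy$, which is finite and differentiable.

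For the upper bound I would use the Chebyshev/Chernoff estimate directly. For $\xi \leq 0$,
\begin{align*}
	\mathbb{P}\{ S_{\gamma} \leq -c \} \leq \exp\big(\gamma^{-1}\xi c + \textstyle\sum_{z} \Lambda(\gamma^{-1}\xi w_{z}^{\gamma})\big), \qquad c := \zeta + \bar{\mathscr{F}}(u;\mathbb{R}) - \bar{\sigma}.
\end{align*}
Taking $\gamma \log$, then $\limsup$, then optimizing over $\xi \leq 0$ gives $\limsup_{\gamma \downarrow 0} \gamma \log \mathbb{P}\{\cdots\} \leq -J_{u}(\zeta)$. This bound holds even when $J_{u}(\zeta) = \infty$.

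The lower bound is the main obstacle, because the event is a closed half-line and $J_{u}$ may be infinite or discontinuous at the relevant point. When $\zeta$ lies in the interior of the finiteness region, I would use the standard tilting argument. Let $\xi_{*}$ be the maximizer given by Lemma~\ref{L: finiteness trick}, so that $\tilde{\sigma}_{u}(\xi_{*}) = \bar{\sigma} - \zeta'$ for a $\zeta'$ slightly larger than $\zeta$. Tilt each $\Theta_{z}$ by $\gamma^{-1}\xi_{*} w_{z}^{\gamma}$. Under the tilted measure, $S_{\gamma}$ has mean $\approx -(\zeta' + \bar{\mathscr{F}}(u;\mathbb{R}) - \bar{\sigma})$ and variance $O(\gamma)$, since $\sum_{z} (w_{z}^{\gamma})^{2} \lesssim \gamma$. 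Chebyshev's inequality then shows that the tilted probability of $\{ S_{\gamma} \leq -c\}$ tends to $1$. Undoing the tilt costs $\exp(-\gamma^{-1} J_{u}(\zeta') + o(\gamma^{-1}))$. Letting $\zeta' \downarrow \zeta$ uses continuity of the concave finite function $J_{u}$ on the interior of its domain.

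The boundary cases need separate treatment.
\begin{itemize}
	\item If $J_{u}(\zeta) = \infty$, the lower bound is trivial.
	\item If $\zeta = 0$ and $\bar{\mathscr{F}}(u;\mathbb{R}) \leq \bar{\sigma}$, then $c \leq 0$ and the probability is bounded below by a constant (a CLT-type argument); if $\bar{\mathscr{F}}(u;\mathbb{R}) > \bar{\sigma}$, then $\tilde{\sigma}_{u}(0) > \bar{\sigma}$ and the interior argument already applies.
	\item At the right endpoint $\mathscr{F}_{\theta_{*}}(u;\mathbb{R}) = \bar{\sigma} - \zeta$, I would bound the probability directly from below. Note that $J_{u}(\zeta) = \lim_{\xi \to -\infty} g_{u}(\xi) = -\int \log \mathbb{P}\{\theta(0) = \theta_{*}\} \, \mathbf{1}_{\{W(u) > 0\}} \, dy$, which is finite only if $\theta_{*}$ is an atom. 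In that case the event contains $\{\Theta_{z} = \theta_{*} \text{ for all } z \text{ with } w_{z}^{\gamma} > 0\}$, whose probability is $\mathbb{P}\{\theta(0) = \theta_{*}\}^{\approx \gamma^{-1}|\{W(u)>0\}|}$, and this matches.
\end{itemize}

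Finally, the statement for a finite family follows from the single-$u$ case by the principle of the largest term. The probability of the union lies between the maximum of the individual probabilities and the number of elements times that maximum. Applying $\gamma \log$ and using the single-$u$ limits yields $-\min_{u \in \mathcal{U}_{\rm f}} J_{u}(\zeta)$.
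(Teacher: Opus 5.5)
Your argument follows the paper's route. The paper computes $\gamma\log\mathbb{E}[\exp(\xi\gamma^{-1}(\mathscr{F}_{1,\gamma}(u;\mathbb{R})-\bar{\mathscr{F}}(u;\mathbb{R})))]\to\int\Lambda(\xi W(u))\,dy$, then says to ``mimic Cram\'er'' (or use G\"artner--Ellis), and handles $\mathcal{U}_{\rm f}$ by the largest-term principle. Your Chernoff upper bound, your tilting lower bound for $\zeta<\bar{\sigma}-\mathscr{F}_{\theta_{*}}(u;\mathbb{R})$, and your treatment of $\zeta=0$ are a correct way of carrying this out. One slip: $J_u$ is convex in $\zeta$, since it is a supremum of affine functions of $\zeta$, not concave. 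Convexity is what gives the continuity you use.

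The step that fails is the right endpoint $\zeta_0=\bar{\sigma}-\mathscr{F}_{\theta_{*}}(u;\mathbb{R})$ when $p_*:=\mathbb{P}\{\theta(0)=\theta_*\}\in(0,1)$.
\begin{itemize}
\item There the event is exactly $\{\Theta_z=\theta_* \text{ for every } z \text{ with } w_z^\gamma>0\}$, so its probability is $p_*^{N_\gamma}$, where $N_\gamma$ counts the cells meeting the open set $O=\{W(u)>0\}=\{|u|<1\}$.
\item However, $\gamma N_\gamma\to|\overline{O}|$, not $|O|$. Every open cell containing a point of $\overline{O}$ meets $O$, and every cell meeting $O$ lies in the $\gamma$-neighbourhood of $O$.
\item Hence $\lim_{\gamma\downarrow 0}\gamma\log\mathbb{P}\{\mathscr{F}_{1,\gamma}(u;\mathbb{R})\leq\bar{\sigma}-\zeta_0\}=-|\overline{O}|\,|\log p_*|$. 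Your (correct) formula gives $-J_u(\zeta_0)=-|O|\,|\log p_*|$.
\end{itemize}
These two values differ as soon as $|\partial O|>0$, and this is compatible with $u\in\mathcal{U}_r$. Take a good transition profile in $\mathcal{U}_r$ with $\mathscr{F}_{\theta_*}(u;\mathbb{R})<\bar{\sigma}$ that equals $-1$ on some interval $[-r,-r+\ell]$. On that interval, replace it by $-1+\eta\,\mathrm{dist}(x,K)^2$, where $K\subseteq[-r,-r+\ell]$ is a fat Cantor set containing the endpoints and $\eta$ is small. The result is Lipschitz, still lies in $\mathcal{U}_r$, and keeps $\mathscr{F}_{\theta_*}(u;\mathbb{R})<\bar{\sigma}$.

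So at this single point the claimed equality is false, and no argument can rescue it; only the upper bound $\leq -J_u(\zeta_0)$ survives. You should either exclude the endpoint or add a hypothesis such as $|\partial\{|u|<1\}|=0$, for example $\{|u|<1\}$ a finite union of intervals. Under that hypothesis your counting argument is correct. The paper's ``mimic Cram\'er'' sketch does not address this case either.
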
 

Note that the lemma implies that $J_{u}(\zeta) \geq 0$, which can also be deduced from the definition \eqref{E: definition of Ju} upon recalling that $\Lambda(0) = 0$.  

		\begin{proof} As sketched in Section \ref{S: large deviations explained} above, the large deviations principle for $\mathscr{F}_{1,\gamma}(u;\mathbb{R})$ is an extension of Cram\'{e}r's Theorem, which can be proved by mimicking the proof of that theorem. (Alternatively, it can be deduced by applying the G\"{a}rtner-Ellis Theorem, see, e.g., \cite[Section 12.2]{Rassoul-Agha}.) The key observation is that, by independence, for any $\xi \in \mathbb{R}$,
        \begin{align*}
            & \log \mathbb{E} [ \exp ( \xi \gamma^{-1} ( \mathscr{F}_{1,\gamma}(u;\mathbb{R})  - \bar{\mathscr{F}}(u;\mathbb{R}))) ] \\
            & \qquad = \sum_{y' \in \gamma \mathbb{Z}} \log \mathbb{E} [ \exp ( \xi ( \theta(0) - \bar{\theta} ) \fint_{y'}^{y' + \gamma} W(u) \, dy ) ].
        \end{align*}
Upon multiplying by $\gamma$, sending $\gamma \downarrow 0$, and recalling the definition of $\Lambda$, this becomes
        \begin{align*}
        \lim_{ \gamma \downarrow 0 } \gamma  \log \mathbb{E} [ \exp ( \xi \gamma^{-1} ( \mathscr{F}_{1,\gamma}(u;\mathbb{R})  - \bar{\mathscr{F}}(u;\mathbb{R})) ) ] = \int_{-\infty}^{\infty} \Lambda ( \xi W(u) ) \, dy.
        \end{align*}
       Since $ \E [ \mathscr{F}_{ 1 , \gamma } ( u ; \mathbb{R} ) ] = \bar{\mathscr{F}}(u;\mathbb{R}) $, we can now conclude, for instance, by mimicking the standard proof of Cram\'{e}r's Theorem (see \cite[Section 5.11]{grimmett_stirzaker} or \cite[Section 2.4]{varadhan}). This completes the proof of \eqref{E: LDP most basic form}.
        
        Finally, here is the standard argument extending to a finite subset $\mathcal{U}_{ \rm f } \subseteq \cup_{r > 0} \mathcal{U}_{r}$: On the one hand, for any given $u_{*} \in \mathcal{U}_{ \rm f }$,
            \begin{align*}
                &\liminf_{\gamma \downarrow 0} \gamma \log \mathbb{P} \{ \mathscr{F}_{1,\gamma} ( u ; \mathbb{R} ) \leq \bar{\sigma} - \zeta \, \, \text{for some} \, \, u \in \mathcal{U}_{ \rm f } \} \\
                &\qquad\geq \liminf_{\gamma \downarrow 0} \gamma \log \mathbb{P} \{ \mathscr{F}_{1,\gamma} ( u_{*} ; \mathbb{R} ) \leq \bar{\sigma} - \zeta \} = -J_{u_{*}}(\zeta).
            \end{align*}  
        Thus, the liminf is at least as large as $-\min\{J_{u}(\zeta) \, \mid \, u \in \mathcal{U}_{ \rm f }\}$.   On the other hand, by the union bound,
            \begin{align*}
                &\limsup_{\gamma \downarrow 0} \gamma \log \mathbb{P} \{ \mathscr{F}_{1,\gamma} ( u ; \mathbb{R} ) \leq \bar{\sigma} - \zeta \, \, \text{for some} \, \, u \in \mathcal{U}_{ \rm f }  \}\\
                &\qquad \leq \lim_{\gamma \downarrow 0} \gamma \log \sum_{u \in \mathcal{U}_{ \rm f }} \mathbb{P} \{ \mathscr{F}_{1,\gamma}(u;\mathbb{R}) \leq \bar{\sigma} - \zeta \} = -\min\{J_{u}(\zeta) \, \mid \, u \in \mathcal{U}_{ \rm f }\} ,
            \end{align*}
            \end{proof} 
        
In view of the definition of $\mathcal{U}_{r}$, we can rewrite the definition \eqref{E: definition of Jr} of $J_{r}$ as follows
\begin{equation*}
    J_{r}(\zeta) = \inf \{ J_{u}(\zeta) \, \mid \, u \in \mathcal{U}_{r} \} .
\end{equation*}
In the proof of Proposition \ref{P: large deviations}, it will be useful to know that $J_{r}$ is left-continuous, as proved next.
 
	\begin{prop} \label{P: left-continuity}
		The function $J_{r}$ defined by \eqref{E: definition of Jr} is left-continuous in $[0,\infty)$.
	 \end{prop}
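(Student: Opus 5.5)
Since each $J_{u}$ is nondecreasing, so is $J_{r} = \inf_{u \in \mathcal{U}_{r}} J_{u}$. Left-continuity at $\zeta_{0} > 0$ therefore amounts to the inequality
\begin{align*}
\lim_{\zeta \uparrow \zeta_{0}} J_{r}(\zeta) \geq J_{r}(\zeta_{0}).
\end{align*}
We may assume the left limit $L$ is finite; otherwise there is nothing to prove.

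The plan is to realize $J_{r}$ as a sup over $\xi$ of an inf over $u$. First, each $J_{u}$ is individually left-continuous. Indeed, $J_{u}(\zeta) = \sup_{\xi \leq 0} g_{u}(\xi;\zeta)$, and each $g_{u}(\xi;\cdot)$ is affine and nondecreasing in $\zeta$ (the coefficient is $-\xi \geq 0$). A supremum of continuous nondecreasing functions is lower semicontinuous and nondecreasing, hence left-continuous. The difficulty is to pass this through the infimum over $u$, which is where compactness of $\mathcal{U}_{r}$ enters.

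Take $\zeta_{n} \uparrow \zeta_{0}$ and $u_{n} \in \mathcal{U}_{r}$ with $J_{u_{n}}(\zeta_{n}) \leq J_{r}(\zeta_{n}) + 1/n \leq L + 1$. We want an a priori energy bound on $u_{n}$. Choosing $\xi = -1$ in the definition of $J_{u_n}$ gives
\begin{align*}
\zeta_{n} + \bar{\mathscr{F}}(u_{n};\mathbb{R}) - \bar{\sigma} \leq L + 1 + \int \Lambda(-W(u_{n})) \, dy.
\end{align*}
The last integral is bounded because $u_{n} = \pm 1$ outside $[-r,r]$, $\Lambda(0) = 0$, and $W$ is bounded on $[-1,1]$. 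Hence $\sup_{n} \int_{-r}^{r} |u_{n}'|^{2} \, dy < \infty$.

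By Arzel\`a--Ascoli, a subsequence converges uniformly on $\mathbb{R}$ (all $u_{n}$ equal $\operatorname{sgn}$ outside $[-r,r]$) and weakly in $H^{1}(-r,r)$ to some $u \in \mathcal{U}_{r}$. Then:
\begin{itemize}
\item by weak lower semicontinuity, $\bar{\mathscr{F}}(u) \leq \liminf_{n} \bar{\mathscr{F}}(u_{n})$;
\item by uniform convergence, continuity of $W$ and $\Lambda$, and dominated convergence on $[-r,r]$, $\int \Lambda(\xi W(u_{n})) \to \int \Lambda(\xi W(u))$ for each fixed $\xi$.
\end{itemize}
Consequently, for each fixed $\xi \leq 0$, using $-\xi \geq 0$,
\begin{align*}
-\xi(\zeta_{0} + \bar{\mathscr{F}}(u) - \bar{\sigma}) - \int \Lambda(\xi W(u)) \, dy \leq \liminf_{n} g_{u_{n}}(\xi;\zeta_{n}) \leq \liminf_{n} J_{u_{n}}(\zeta_{n}) \leq L.
\end{align*}
Taking the sup over $\xi \leq 0$ gives $J_{r}(\zeta_{0}) \leq J_{u}(\zeta_{0}) \leq L$, which is the claim. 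This shows in addition that the infimum defining $J_{r}$ is attained whenever it is finite, consistent with the $\min$ in \eqref{E: definition of Jr}.

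The main obstacle is the compactness step: extracting a uniform $H^{1}$ bound from the finiteness of $J_{u_{n}}$ alone. The test choice $\xi = -1$ handles this, using the fixed support $[-r,r]$ to control the $\Lambda$ term. The second delicate point is the sign bookkeeping, since the factor $-\xi \geq 0$ is what makes lower semicontinuity of the gradient term go the right way. At $\zeta_{0} = 0$ there is nothing to prove, because left-continuity on $[0,\infty)$ is vacuous there.
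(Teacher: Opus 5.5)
Your proof is correct and follows the paper's argument: you use monotonicity to reduce left-continuity to lower semicontinuity, then run the direct method in $H^{1}((-r,r))$, using that $(u,\zeta) \mapsto -\xi(\zeta + \bar{\mathscr{F}}(u;\mathbb{R}) - \bar\sigma) - \int \Lambda(\xi W(u))\,dy$ is lower semicontinuous for each fixed $\xi \le 0$ and then taking the supremum over $\xi$. The only difference is where compactness comes from: the paper invokes Lemma \ref{L: finiteness trick} to get the uniform bound $\mathscr{F}_{\theta_{*}}(u;\mathbb{R}) \le \bar\sigma$ whenever $J_u(\zeta)$ is finite, whereas you obtain an $L$-dependent $H^{1}$ bound directly by testing with $\xi = -1$, which is equally valid and slightly more self-contained.
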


		\begin{proof} Since $J_{r}$ is increasing, it is enough to prove that $ J_{ r } $ is lower semicontinuous.
		
		From now on, consider $\mathcal{U}_{r}$ as a subset of $H^{1}((-r,r))$ with the weak topology.  First, notice that $ u \mapsto \bar{\mathscr{F}} ( u ; \mathbb{R} ) $ is lower semicontinuous,~and $ u \mapsto \int_{ -\infty }^{ \infty} \Lambda ( \xi W ( u ) ) \, dy $ is continuous. Therefore for any $ \xi \leq 0 $ the functions
		\begin{align*}
			( u , \zeta ) \mapsto - \xi \zeta - \xi (\bar{\mathscr{F}} ( u ; \mathbb{R} ) - \bar \sigma ) -  \int_{ -\infty }^{ \infty} \Lambda ( \xi W ( u ) ) \, dy
			\quad \text{are~lower semicontinuous}.
		\end{align*}
		By Lemma 2, if $u \in \mathcal{U}_{r}$ satisfies $J_{u}(\zeta) < \infty$ for some $\zeta > 0$, then $\mathscr{F}_{\theta_{*}} \leq \bar{\sigma}$.  Therefore, the set of such functions is weakly compact in $H^{1}((-r,r))$, so we can apply the direct method of the calculus of variations to see that
		\begin{align*}
			\zeta \mapsto J_{r} ( \zeta ) \stackrel{ \eqref{E: definition of Jr} }{ = } \inf \{ J_{ u } ( \zeta ) \, \mid \, u \in \mathcal{U}_{r} \}
			\quad \text{is lower semicontinuous}.
		\end{align*}
    \end{proof}
   
The large deviations principle for $\sigma_{1,\gamma}(1;[-r,r])$ now follows by using Lemma \ref{L: basic LDP} in conjunction with an approximation argument and left-continuity.

	\begin{proof}[Proof of Proposition \ref{P: large deviations}] First, we establish the large deviations principle: The discussion in Section \ref{S: large deviations explained} shows that 
    \begin{equation*}
        \liminf_{\gamma \downarrow 0 } \gamma \log \mathbb{P} \{ \sigma_{1,\gamma}(1;[-r,r]) \leq \bar{\sigma} - \zeta \} \geq - J_{r}(\zeta).
    \end{equation*}
    It remains to establish that
    \begin{equation*}
        \limsup_{\gamma \downarrow 0} \gamma \log \mathbb{P} \{ \sigma_{1,\gamma}(1;[-r,r]) \leq \bar{\sigma} - \zeta \} \leq - J_{r}(\zeta).
    \end{equation*}
   We claim this follows from the second half of Lemma \ref{L: basic LDP} and a deterministic approximation argument.
    
    Indeed, on the one hand, if $\mathcal{M} \subseteq \mathcal{U}_{r}$ is the set of all $u \in \mathcal{U}_{r}$ such that $\mathscr{F}_{1,\gamma}(u;\mathbb{R}) = \sigma_{1,\gamma}(1;[-r,r])$ for some realization of $\theta$ and some $\gamma > 0$, then $\mathcal{M}$ is precompact in $H^{1}((-r,r))$ with the norm topology.  (This follows from the Euler-Lagrange equation satisfied by minimizers, which implies a deterministic $L^{\infty}$ bound on the second derivative.)  On the other hand, due to the continuity of $W$ and the estimate 
        \begin{align*}
            | \mathscr{F}_{1,\gamma}(u;(-r,r)) - \mathscr{F}_{1,\gamma}(v;(-r,r)) | &\leq 4 ( \| u' \|_{L^{2}((-r,r))} + \|v'\|_{L^{2}((-r,r))} ) \| u' - v' \|_{L^{2}((-r,r))} \\
            &\qquad + \theta^{*} \int_{-r}^{r} | W(u) - W(v) | \, dy,
        \end{align*}
        $\mathscr{F}_{1,\gamma}$ is uniformly continuous on $\mathcal{M}$ with a deterministic modulus of continuity.  Therefore, for any $\nu > 0$, it is possible to find an $N \in \mathbb{N}$ and $\gamma$-independent (deterministic) functions $u_{1},\dots,u_{N} \in \mathcal{U}_{r}$ such that 
			\begin{align*}
				\sigma_{1,\gamma}(1; [-r,r]) \geq \min \left\{ \mathscr{F}_{1,\gamma}(u_{1};\mathbb{R}), \dots, \mathscr{F}_{1,\gamma}(u_{N};\mathbb{R}) \right\} - \nu.
			\end{align*}
        Thus, by Lemma \ref{L: basic LDP},
			\begin{align*}
				\limsup_{ \gamma \downarrow 0 } \gamma \log \mathbb{P} \{ \sigma_{\epsilon,\delta}(1;[-r,r]) \leq \bar{\sigma} - \zeta \} &\leq - \min \{ J_{u_{1}}(\zeta - \nu),\dots,J_{u_{N}}(\zeta - \nu) \} \\
                &\leq - J_{r}(\zeta - \nu).
			\end{align*}
        Since $J_{r}$ is left-continuous, see Proposition \ref{P: left-continuity}, we conclude upon sending $\nu \downarrow 0$.

        Next, we gather properties of $ J_r $ and the limit $ J $: By definition, if $s \geq r \geq 1$, then $\mathcal{U}_{r} \subseteq \mathcal{U}_{s}$, so $J_{r}(\zeta) \geq J_{s}(\zeta)$ holds pointwise.  As in the statement, let $J : [0,\infty) \to [0,\infty]$ be the infimum
        \begin{equation}\label{eqn:defn-j-alternative}
            J(\zeta) = \inf \left\{ J_{r}(\zeta) \, \mid \, r \geq 1 \right\} = \lim_{ r \to \infty } J_{r}(\zeta).
        \end{equation}
       	Proposition \ref{cor:monotonicity-jr} immediately implies property (iii), i.e.~that the limit $ J $ is strictly increasing in $[0,\bar{\sigma}-\sigma_{*}]$.
	
        Property (ii) of the limit $ J $ follows from the first half of Lemma \ref{L: finiteness trick}. If $\zeta \in (0,\bar{\sigma} - \sigma_{*})$, then $\sigma_{*} <\bar{\sigma} - \zeta$ so the formula \eqref{E: derivative} characterizing $\sigma_{*}$ implies there is an $r > 0$ and a $u \in \mathcal{U}_{r}$ such that $\mathscr{F}_{\theta_{*}}(u;\mathbb{R}) \leq \bar{\sigma} - \zeta$.  By Lemma \ref{L: finiteness trick}, for any $\zeta' \in [0,\zeta)$, we have $J(\zeta') \leq J_{u}(\zeta') < \infty$.  This proves $J$ is finite in $[0,\bar{\sigma} - \sigma_{*})$.  
        Conversely, for any $r > 0$ and $u \in \mathcal{U}_{r}$, we know that $\mathscr{F}_{\theta_{*}}(u;\mathbb{R}) \geq \sigma_{*}$ and, thus, $J_{u}(\zeta) = \infty$ for any $\zeta > \bar{\sigma} - \sigma_{*}$ by Lemma \ref{L: finiteness trick}.  This proves $J = \infty$ in $(\bar{\sigma}-\sigma_{*},\infty)$.

        Finally, we prove property (i) of $J$, that is, $J(0) = 0$ and $J$ is continuous at zero.  To this end, since $J$ is nonnegative and nondecreasing, it suffices to prove that if $\{\zeta_{n}\}_{n \in \mathbb{N}}$ is any sequence of positive numbers such that $\zeta_{n} \to 0$ as $n \to \infty$, then $J(\zeta_{n}) \to 0$. By the formula \eqref{eqn:bar-sigma} determining $\bar{\sigma}$ and a standard compactness argument, we can fix a function $\bar{q} : \mathbb{R} \to [-1,1]$ such that $\bar{q}(\pm \infty) = \pm 1$ and $\bar{\mathscr{F}}(\bar{q};\mathbb{R}) = \bar{\sigma}$.  Let $\{u_{n}\}_{n \in \mathbb{N}} \subseteq \bigcup_{r > 0} \mathcal{U}_{r}$ be a sequence such that $u_{n} \to \bar{q}$ in $L^{\infty}(\mathbb{R})$, $u'_{n} \to \bar{q}'$ in $L^{2}(\mathbb{R})$, and $W(u_{n}) \to W(\bar{q})$ in $L^{1}(\mathbb{R})$.  Since $\mathscr{F}_{\theta_{*}}(u_{n};\mathbb{R}) \to \mathscr{F}_{\theta_{*}}(\bar{q};\mathbb{R})$ as $n \to \infty$ and $\mathscr{F}_{\theta_{*}}(\bar{q};\mathbb{R}) < \bar{\mathscr{F}}(\bar{q};\mathbb{R}) = \bar{\sigma}$, there is an $N \in \mathbb{N}$ such that 
            \begin{align*}
                \mathscr{F}_{\theta_{*}}(u_{n};\mathbb{R}) \leq \bar{\sigma} - 2\zeta_{n} \quad \text{for each} \, \, n \geq N. 
            \end{align*}
        Therefore, by Lemma \ref{L: finiteness trick} and its proof, for each $n \geq N$, $J_{u_{n}}(\zeta_{n}) < \infty$ and there is a $\xi_{n} \in (-\infty,0]$ such that 
        	\begin{align}
		J_{u_{n}}(\zeta_{n}) = - \xi_{n} ( \zeta_{n} + \bar{\mathscr{F}}(u_{n}; \mathbb{R}) - \bar{\sigma} ) - \int_{-\infty}^{\infty} \Lambda ( \xi_{n} W(u_{n}) ) \, dy  \label{E: analyze J at zero} \\
		\text{and} \quad \zeta_{n} + \bar{\mathscr{F}}(u_{n};\mathbb{R}) - \bar{\sigma} = - \int_{-\infty}^{\infty} \Lambda'( \xi_{n} W(u_{n}) ) W(u_{n}) \, dy. \nonumber
	\end{align}
Up to extracting a subsequence, there is no loss of generality in assuming that $\xi_{n} \to \xi_{*}$ for some $\xi_{*} \in [-\infty,0]$, which implies, by the dominated convergence theorem (and an application of \eqref{E: asymptotics derivative lambda} if $ \xi_* = - \infty $),
	\begin{align*}
		0 = \bar{\mathscr{F}}(\bar{q}; \mathbb{R}) - \bar{\sigma} &= \lim_{n \to \infty} \left( \zeta_{n} + \bar{\mathscr{F}}(u_{n};\mathbb{R}) - \bar{\sigma} \right) \\
		&= - \lim_{ n \to \infty } \int_{-\infty}^{\infty} \Lambda'(\xi_{n} W(u_{n}) ) W(u_{n}) \, dy = - \int_{-\infty}^{\infty} \Lambda'(\xi_{*} W(\bar{q}) ) W(\bar{q}) \, dy.
	\end{align*}
Since $\xi_{*} \leq 0$, $\Lambda'(0) = 0$, and $\Lambda'$ is strictly increasing by Lemma \ref{lem:properties-lambda}, this implies $\xi_{*} = 0$.  Upon invoking \eqref{E: analyze J at zero} and recalling that $\Lambda$ is nonnegative, we conclude
	\begin{align*}
		\limsup_{n \to \infty} J(\zeta_{n}) \leq \limsup_{n \to \infty} J_{u_{n}}(\zeta_{n}) \leq \limsup_{n \to \infty} \left( - \xi_{n} ( \zeta_{n} + \bar{\mathscr{F}}(u_{n};\mathbb{R}) - \bar{\sigma} ) \right) = 0.
	\end{align*}
       \end{proof}  

\begin{remark} The argument used in the previous proof can also be used to prove the following formula for $J$:
	\begin{align*}
		J(\zeta) = \inf \left\{ J_{u}(\zeta) \, \mid \, u : \mathbb{R} \to [-1,1], \, \, \bar{\mathscr{F}}(u;\mathbb{R}) < \infty, \, \, u(\pm \infty) = \pm 1 \right\}.
	\end{align*}\end{remark}
        
\subsection{Proof of Proposition \ref{P: lower deviations}} We now show that the limiting surface tension is no greater than $\bar{\sigma} - \zeta$ provided the scaling $\epsilon \mapsto \delta(\epsilon)$ is chosen in such a way that \eqref{E: assumption about delta} holds. 
	
\begin{proof}[Proof of Proposition \ref{P: lower deviations}]
	Since \eqref{E: assumption about delta} is stable under changing $ \delta( \epsilon ) $ and $ \epsilon $ by a multiplicative constant, we can assume w.l.o.g.~that $ \varrho = 1 $.
	Thus under the change of variables \eqref{eqn:change-of-variables} we have 
	\begin{align*}
		\sigma_{ \epsilon, \delta ( \epsilon ) } ( 1; [- 1, 1 ] )
		&= \sigma_{ 1 , \gamma ( R ) } ( 1  ; [ - R , R ] ) .
	\end{align*}
	Moreover, due to Remark \ref{rmk: triviality} we can assume that the boundary datum satisfies \eqref{A: triviality}, i.e.~$ q ( y ) = \pm 1 $ if $ \pm y \geq 1 $. Thus we have the monotonicity property
	\begin{gather}
		\sigma_{ 1 , \gamma } ( 1; [ - R , R ] ) \leq \sigma_{ 1 , \gamma } ( 1; I ) \nonumber \\
		 \text{for any interval } I \subseteq [ - R , R ]  ~ \text{of width at least } 2 . \nonumber
	\end{gather}
	Therefore it is enough to establish the following: There exists an $ r_0 = r_0 ( \zeta ) \geq 1 $ such that for any $ r \geq r_0 $ we have
	\begin{align}\label{varm02}
		\lim_{ R \uparrow \infty } \mathbb{P} \big\{  \sigma_{ 1 , \gamma(R) } ( 1;  [ y - r , y + r ] ) \leq \bar\sigma - \zeta ~ \text{for some} ~ y \in [ - R + r , R - r  ] \big\} = 1
	\end{align}
	provided $ \gamma(R), R $ are defined by \eqref{eqn:change-of-variables} with $ \epsilon $, $ \delta $ satisfying \eqref{E: assumption about delta}.
	
	To this end, it is convenient to consider the Bernoulli random variables
	\begin{align*}
		X_{ k } = \boldsymbol{1}_{ \{ \sigma_{ 1 , \gamma(R) } ( 1 ; [ k - r , k + r ] ) \leq \bar\sigma - \zeta \} }
		\quad \text{and their sum} \quad
		S = \sum_{ k \in r \mathbb{Z} \cap [ - R + r , R - r  ] } X_k .
	\end{align*}
	For ease of notation we do not indicate their dependence on $ R $ explicitly. The idea is that, up to direct neighbours, the sum is taken over independent variables, and therefore to leading order behaves like its expectation. More specifically we establish \eqref{varm02} by using assumption \eqref{E: assumption about delta} to verify that
	\begin{align}\label{varm01}
		\mathbb{E} [ S ] \rightarrow \infty ~ \text{as} ~ R \nearrow \infty
		\quad \text{alongside} \quad
		{ \rm Var } [ S ] \lesssim \mathbb{E} [ S ].
	\end{align}
	This suffices to establish \eqref{varm01} due to the elementary Chebyshev-type bound
	\begin{align*}
		\mathbb{P} \big\{ S \leq 1 \big\} 
		\leq \mathbb{P} \big\{  | S - \mathbb{E} [ S ] | \geq \frac{ 1 }{ 2 } \mathbb{E} [ S ] \big\} 
		\lesssim \frac{ { \rm Var } [ S ] }{ ( \mathbb{E} [ S ] )^2 }
		\stackrel{ \eqref{varm01} }{ \rightarrow } 0 
		~ ( \text{as} ~ R \nearrow \infty ) ,
	\end{align*}
which, in view of the definition of $ S $, is equivalent to \eqref{varm02}.
        
To establish the first item in \eqref{varm01} notice that the random variables $ X_k $ have the same law and thus
	\begin{align*}
		\mathbb{E} [ S ]
		\sim \frac{ R }{ r } \, \mathbb{P} \big\{ \sigma_{ 1 , \gamma } ( 1 ; [ - r , r ] ) \leq \bar\sigma - \zeta \big\}   .
	\end{align*}
	To lower bound the probability on the right-hand side, we notice that by \eqref{eqn:defn-j-alternative} and due to assumption \eqref{E: assumption about delta}, we can find $ r_0 = r_0 ( \zeta ) \geq 1 $ and $ \nu = \nu ( \zeta ) \ll 1 $ such that
	\begin{align*}
		J_r ( \zeta ) 
		< ( 1 + \nu )  J_r ( \zeta ) < \liminf_{ \epsilon \downarrow 0} \frac{ \delta(\epsilon) }{ \epsilon } | \log \epsilon |
		\quad \text{for all} \quad
		r \geq r_0 .
	\end{align*}
	Due to the first inequality we may use the large deviation principle in Proposition \ref{P: large deviations} to obtain the lower bound
	\begin{align*}
		\mathbb{E} [ S ] \gtrsim \frac{ R }{ r } \exp \Big( - ( 1 + \nu )  J_{ r } ( \zeta ) \frac{ 1 }{ \gamma(R) } \Big) 
		\quad \text{provided} \quad
		R \gg_{ \nu, r } 1 .
	\end{align*}
	Thus by the choice of $ \nu $, \eqref{E: assumption about delta} and \eqref{eqn:limit-condition-ed-to-gr} imply the claimed \eqref{varm01}.
	
	Let us now focus on the second item in \eqref{smm02}. We start from the observation that the random variable $ X_k $ depends only on the rescaled field $ \theta ( \gamma^{-1} \cdot )$ restricted to the interval $  [ k - r , k + r ]  $. Therefore, since $ \theta$ has range of dependence equal to $1$, we learn that
	\begin{align*}
		X_{ j } ~ \text{and} ~ X_k ~ \text{are independent if~} | k - j | > 2 ( r + \gamma ) .
	\end{align*}
	Using Cauchy-Schwarz and Young, and also $ \gamma(R) \leq r $ (for $R$ large enough), we thus learn
	\begin{align*}
		{ \rm Var } [ S ]
		= \sum_{ \substack{ k , j  \in r \mathbb{Z} \cap [ - R + r , R -  r  ] \\ | k - j | \leq 2 ( r + \gamma(R) ) } } { \rm Cov } [ X_k , X_j ]
		\lesssim  \sum_{ k \in r \mathbb{Z} \cap [ - R + r , R - r  ] } { \rm Var } [ X_k ]
	\end{align*}
	Since for any Bernoulli variable we have $ { \rm Var } [ X_k ] \leq \mathbb{E} [ X_k ] $, we have shown the second item in \eqref{smm02}.
\end{proof}

\subsection{Proof of Proposition \ref{P: deviations 1d checkerboard}}\label{S: deviations 1d checkerboard}

As announced in the introduction it is easy to adapt the proof of Theorem \ref{T: main deviations} to the situation of Proposition \ref{P: deviations 1d checkerboard}. More specifically, we follow the strategy of Proposition \ref{P: lower deviations} presented in the previous subsection.

Beforehand, recall that for one-dimensional media we have
\begin{align*}
    \bar a = ( \E[ a^{ - 1 } ( 0 ) ] )^{ - 1 } .
\end{align*}
In particular, by Jensen's inequality, if $ a $ is nonconstant, then $ \bar a > \lambda $.

\begin{proof}[Proof of Proposition \ref{P: deviations 1d checkerboard}]
	As in the proof of Proposition \ref{P: lower deviations} we assume w.l.o.g.~that $ \varrho = 1 $ and $ q ( y ) = \pm 1 $ if $ \pm y \geq 1 $. Since we can always lower bound
	\begin{align*}
		\min \Big\{ \int_{-R}^{R} \Big( \frac{1}{2}\lambda |u'|^{2} + \theta_{*} W(u) \Big) \, dy \, \mid \, u(\pm R) = q(\pm R) \Big\}
		\leq \sigma_{ \epsilon, \delta ( \epsilon ) } ( 1; [ - 1, 1 ] ) ,
	\end{align*}
	and the lower bound converges to $  \sigma_{W} \sqrt{ \theta_{*} \lambda } $ as $ R \rightarrow \infty $, cf.~\eqref{E: lower energy variational formula}, the claim of Proposition \ref{P: deviations 1d checkerboard} reduces to proving
	\begin{align*}
		\lim_{ \epsilon \downarrow 0 } \mathbb{P} \big\{ \sigma_{ \epsilon, \delta ( \epsilon ) } ( 1 ; [ - 1, 1 ] ) \leq ( 1 + \nu ) \sigma_{W} \sqrt{ \theta_{*} \lambda } \big\} = 1
		\quad \text{for any } \nu > 0 .
	\end{align*}
	Now the difference compared to the proof of Proposition \ref{P: lower deviations} is simply to replace the sufficient condition \eqref{varm02} by the statement
	\begin{align}
		\lim_{ R \uparrow \infty } \mathbb{P} \big( \bigcup_{ y \in [ -R + r, R - r ] } \Omega_y^{ \gamma(R) }  \big) = 1 ,
	\end{align}
    where $ \Omega_y^{ \gamma } $ denotes the event
    \begin{align*}
        \Omega_{ y }^{ \gamma } = \{  a ( \gamma^{-1} \cdot ) |_ { [ y - r , y + r ] } \leq ( 1 + \nu ) \lambda ~ \text{and} ~ \theta ( \gamma^{-1} \cdot ) |_ { [ y - r , y + r ] } \leq ( 1 + \nu )  \theta_* \}
    \end{align*}
	This is simpler to prove than \eqref{varm02} since it does not rely on a large deviations principle but merely the elementary
	\begin{align}
		\mathbb{P} (  \Omega_0^{ \gamma } )
		\sim p^{ \frac{ r }{ \gamma } }
		\quad \text{for~some} ~ p = p(\nu) > 0 ~ \text{and} ~ r \gg 1,
	\end{align}
	which follows from the fact that $\lambda$ and $\theta_{*}$ are the essential infima of $a(y)$ and $\theta(y)$, respectively,
	together with the observation that
	\begin{align}
		\lim_{ R \uparrow \infty } R p^{ \frac{ r }{ \gamma(R) } } = \infty 
		\quad \text{for~any } p > 0,
	\end{align}
	by our assumption and the change-of-variables \eqref{eqn:change-of-variables}.
\end{proof}

\subsection{Proof of Proposition \ref{P: upper deviations}} Next, we obtain a lower bound on the limiting surface tension under assumption \eqref{eqn:lb-helper-minus-two}. In the proof, we use the following lemma about the shape of minimizers.
	
	\begin{lemma} \label{L: topology lemma} Fix $ E_{0} > 0$. For any $\nu \in (0,1)$, there is an $ r = r (\nu, E_{0}) > 0$ with the following property: If $u : [-R,R] \to [-1,1]$ has $u(R) = 1$, $u(-R) = -1$, and $\mathscr{F}_{\theta^{*}}(u; [-R,R]) \leq E_{0}$, then there is an interval $[a,b] \subseteq [ -R, R ]$ such that 
		\begin{align*}
			u(a) = -1 + \nu, \quad u(b) = 1 - \nu, \quad \text{and} \, \, \quad |b - a| \leq r.
		\end{align*}
	\end{lemma}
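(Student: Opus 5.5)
The argument is a one-dimensional Modica--Mortola type estimate. Let $c_{0} = \sqrt{2\theta^{*}}$. By Young's inequality, on any subinterval $[s,t] \subseteq [-R,R]$,
\begin{align*}
	\mathscr{F}_{\theta^{*}}(u;[s,t]) \geq \int_{s}^{t} \sqrt{2\theta^{*} W(u)}\,|u'| \, dy \geq c_{0} \Big| \int_{u(s)}^{u(t)} \sqrt{W(v)}\,dv \Big| .
\end{align*}
Here we read $\mathscr{F}_{\theta^{*}}$ on $[-R,R]$ as the integral of $\frac{1}{2}|u'|^{2} + \theta^{*}W(u)$. We may take $u \in H^{1}$, hence continuous, since otherwise the energy is infinite.

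The plan has two parts. The first is to choose the interval. Since $u(-R)=-1$ and $u(R)=1$, continuity lets us take $a = \sup\{y : u(y) \leq -1+\nu\}$. Then $u(a) = -1+\nu$ and $u > -1+\nu$ on $(a,R]$. Next set $b = \inf\{y \geq a : u(y) \geq 1-\nu\}$. Then $u(b) = 1-\nu$ and
\begin{align*}
	u \in (-1+\nu, 1-\nu) \quad \text{on } (a,b).
\end{align*}
So on $(a,b)$ the function $u$ stays inside the compact set $K_{\nu} = [-1+\nu, 1-\nu]$, which lies away from the zeros $\pm 1$ of $W$.

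The second part is to bound the length. Continuity and positivity of $W$ on $K_{\nu}$ give
\begin{align*}
	m_{\nu} = \min_{K_{\nu}} W > 0 .
\end{align*}
Dropping the gradient term, we get
\begin{align*}
	E_{0} \geq \mathscr{F}_{\theta^{*}}(u;[a,b]) \geq \theta^{*} m_{\nu}\,(b-a),
\end{align*}
hence
\begin{align*}
	b - a \leq r := \frac{E_{0}}{\theta^{*} m_{\nu}} .
\end{align*}
This $r$ depends only on $\nu$, $E_{0}$, $W$ and $\theta^{*}$, as required.

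The only point needing care is the choice of $a$ as the last exit from $\{u \leq -1+\nu\}$ and $b$ as the first hitting time of $\{u \geq 1-\nu\}$ after $a$. This ordering guarantees $u(a) = -1+\nu$, $u(b) = 1-\nu$, and that $u$ stays in $K_{\nu}$ on $(a,b)$, rather than merely having these endpoint values. No other step poses a difficulty. In particular, the Young inequality bound above is not needed, since the potential term alone controls the length.
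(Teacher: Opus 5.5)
Your proof is correct and is essentially the paper's argument: choose $a<b$ with $u(a)=-1+\nu$, $u(b)=1-\nu$ and $u$ confined to $(-1+\nu,1-\nu)$ on $(a,b)$, then bound $b-a$ using only the potential term and $\min_{[-1+\nu,1-\nu]} W>0$. Your last-exit/first-hitting choice of $a,b$ and your constant $r = E_{0}/(\theta^{*} m_{\nu})$ are slightly more careful than the paper's version, whose displayed inequality omits the factor $\theta^{*}$ and so is only justified as written when $\theta^{*}\geq 1$.
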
 
	
		\begin{proof} By continuity, since $u(-R) = -1$ and $u(R) = 1$, there are points $a < b$ such that 
			\begin{gather*}
				-1 + \nu < u(x) < 1 - \nu \quad \text{for each} \, \, x \in (a,b), \\
				u(a) = -1 + \nu, \quad u(b) = 1 - \nu.
			\end{gather*}
		This implies
			\begin{align*}
				|b - a| \min \left\{ W(u) \, \mid \, -1 + \nu \leq u \leq 1 - \nu \right\} \leq \mathscr{F}_{\theta^{*}}(u;[-R,R]) \leq E_{0}.
			\end{align*}
		So if $ r =  E_{0}  / \min \{ W(u) \, \mid \, -1 + \nu \leq u \leq 1 - \nu \} $, we learn that $ |b - a| \leq r $ as claimed.
	\end{proof}
		
\begin{proof}[Proof of Proposition \ref{P: upper deviations}]
		
			As in the proof of Proposition \ref{P: lower deviations} we assume w.l.o.g.~that $ \varrho = 1 $ and that the boundary datum satisfies $ q ( y ) = \pm 1 $ if $ \pm y \geq 1 $, see Remark \ref{rmk: triviality}.
			
			Our first goal is to prove a converse to the monotonicity property of $  \sigma_{ 1 , \gamma } ( 1;  \cdot )  $ used in the proof of Proposition \ref{P: lower deviations}, namely, for every $ \nu_{0} > 0 $ there exists an $ r_{*} = r_{*} ( \nu_{0} )  \geq 2 $ such that for any $ R \geq 1 $
			\begin{gather}
				\sigma_{ 1 , \gamma } ( 1; I ) \leq \sigma_{ 1 , \gamma } ( 1; [ - R , R ] ) + \nu_{0} \label{eqn:lb-helper} \\
				\text{for some interval } I \subseteq [ -R, R ] ~ \text{with width between } 2 \text{ and }  r_{*} . \nonumber
			\end{gather}
			Our second goal is to establish that under our assumption \eqref{eqn:lb-helper-minus-two} there exists an $ r_0 = r_0 ( \zeta ) \geq 1 $ and a $\nu_{0} = \nu_{0} ( \zeta ) > 0$ such that for any $ r \geq r_0 $ we have
			\begin{align}\label{smm01}
				\lim_{ R \uparrow \infty } \mathbb{P} \big\{  \sigma_{ 1 , \gamma(R) } ( 1;  [ y - r , y + r ] ) \geq \bar{\sigma} - (\zeta - \nu_{0}) ~ \text{for all} \, \, y \in r \mathbb{Z} \cap  [- R  , R ] \big\} = 1 .
			\end{align}
			provided $ R, \gamma(R) $ are defined by \eqref{eqn:change-of-variables} with $ \epsilon $, $ \delta $ satisfying \eqref{eqn:lb-helper-minus-two}. 
			
			We complete the proof using \eqref{eqn:lb-helper} and \eqref{smm01} as follows: If $R \geq r/2$ and if $I$ is as in \eqref{eqn:lb-helper}, then we can fix a $y \in r \mathbb{Z} \cap [-R,R]$ such that $ I \subseteq [ y - r , y + r ] $, so that, by the monotonicity argument from Proposition \ref{P: lower deviations},
				\begin{align*}
					\sigma_{1,\gamma(R)}(1;[y-r,y+r]) \leq \sigma_{1,\gamma(R)}(1;I) \leq \sigma_{1,\gamma(R)}(1;[-R,R]) + \nu_{0}
				\end{align*}
		Therefore, \eqref{smm01} implies that $\sigma_{1,\gamma(R)}(1;[-R,R]) \geq \bar{\sigma} - \zeta$ with probability tending to one as $R \nearrow \infty$.
			
			\textit{Argument for \eqref{eqn:lb-helper}}: To establish \eqref{eqn:lb-helper} let us first pick some small $ \nu > 0 $. Due to the boundary datum of $  \sigma_{ 1 , \gamma } ( 1;  [ - R , R ] ) $ we know from Lemma \ref{L: topology lemma} that there exists an $ r_{*} > 0 $ such that for any minimizer $ u $ of $  \sigma_{ 1 , \gamma } ( 1; [ - R , R ] ) $ there exists an interval $ I = [ a, b ] $ of width at most $r_{*}$ such that $ u ( a ) = - 1 + \nu $ and $ u ( b )= 1 - \nu $. We use $ u $ to build a competitor $ w $ by setting
			\begin{align*}
				w ( x ) &= \left\{ \begin{aligned}
					- 1 \quad & \text{if} ~ x \leq \max \{ -R , a - 1 \} , \\
					u ( x ) \quad & \text{if} ~ a \leq x \leq b , \\
					+1 \quad & \text{if} ~ \min \{ R, b + 1 \}  \leq x .
				\end{aligned}\right.
			\end{align*}
			In either of the remaining two intervals $  ( a - 1 , a ) \cap ( - R , R )  $ and $ ( b , b + 1 ) \cap ( - R , R ) $ we set $ w = u $ if the interval has width less than $1$, and otherwise extend $ w $ by linear interpolation. Thus, from estimating the linear interpolation part, we learn that
			\begin{align*}
				\mathscr{F}_{ \gamma } ( w ; ( - R , R ) )
				- \mathscr{F}_{ \gamma } ( u ; \{ u = w \} )
				\lesssim \nu^2 +  \sup_{ 1 - \nu \leq | u | \leq 1 } W ( u ) .
			\end{align*}
			Using the boundary data of  $ w $ to lower bound $ \mathscr{F}_{ \gamma } ( w ; ( - R , R ) ) $ and the estimate $  \mathscr{F}_{ \gamma } ( u ; \{ u = w \} ) \leq \sigma_{ 1 , \gamma } ( 1 ; [ - R , R ] ) $ (by the choice of $ u $), we find
			\begin{align*}
				 \sigma_{ 1 , \gamma } ( 1; ( a - 1 , b + 1 ) ) \cap [ -R , R ] ) 
				 -  \sigma_{ 1 , \gamma } ( 1; [ - R , R ] )
				 \lesssim \nu^2 +  \sup_{ 1 - \nu \leq | u | \leq 1 } W ( u ) ,
			\end{align*}
			which establishes \eqref{eqn:lb-helper} upon choosing $ \nu $ such that the error is at most $ \nu_{0} $.
			
			\textit{Argument for \eqref{smm01}}: To establish \eqref{smm01}, we first choose $\nu_{0}$ and then proceed as in the argument for \eqref{varm02}.  Since $J$ is left-continuous by Proposition \ref{P: left-continuity}, the present assumption on $J(\zeta)$ implies there is a $\nu_{0} \in (0, \zeta)$ such that 
				\begin{align} \label{eqn:lb-helper-minus-two-again}
					\limsup_{ \epsilon \downarrow 0 } \frac{ \delta(\epsilon) |\log \epsilon| }{ \epsilon } < J ( \zeta - \nu_{0} ).
				\end{align}
			Next, we consider the complementary event, and as before, the Bernoulli random variables
			\begin{align*}
				X_{ k } = \boldsymbol{1}_{ \{  \sigma_{ 1 , \gamma(R) } ( 1; [ k - r , k + r ] ) \leq \bar{\sigma} - ( \zeta - \nu_{0} ) \} } 
				\quad \text{and their sum} \quad
				S = \sum_{ k \in r \mathbb{Z} \cap [ - R , R  ] } X_k .
			\end{align*}
			We establish \eqref{smm01} by showing that
			\begin{align}\label{smm02}
				 \mathbb{E} [ S ] \rightarrow 0 ~ ( \text{as} ~ R \nearrow \infty )
				 \quad \text{and} \quad
				 { \rm Var } [ S ] \lesssim \mathbb{E} [ S ] .
			\end{align}
			Indeed, \eqref{smm02} shows that $ S \rightarrow 0 $ in probability as $ R \nearrow 0 $, which is equivalent to \eqref{smm01}.
			
			We conclude similarly to the argument of \eqref{varm02} in the proof of Proposition \ref{P: lower deviations}: As above the first item in \eqref{smm02} follows from the observation that all $ X_k $ have the same law, so that by the large deviation principle in Proposition \ref{P: large deviations} we know that, for any $\nu \in (0,1)$,
			\begin{align*}
				\mathbb{E} [ S ] 
				\sim \frac{ R }{ r } \, \mathbb{P} \big\{  \sigma_{ 1 , \gamma } ( 1;  [ - r , r ] ) \leq \bar{\sigma} - ( \zeta - \nu_{0} ) \big\} 
				\leq  \frac{ R }{ r } \exp \Big( - ( 1 - \nu ) J_{ r } ( \zeta - \nu_{0} ) \frac{ 1 }{ \gamma } \Big) 
			\end{align*}
			provided $R \gg_{r,\nu} 1$.  Since by \eqref{eqn:lb-helper-minus-two-again} we can choose $\nu = \nu(\zeta - \nu_{0}) \in (0,1)$ and $r_{0} = r_{0}(\zeta-\nu_{0}) > 0$ such that 
				\begin{align*}
					\limsup_{ \epsilon \downarrow 0 } \frac{ \delta(\epsilon) |\log \epsilon| }{ \epsilon } < ( 1 - \nu ) J_{r} ( \zeta - \nu_{0} ) \quad \text{for each} \, \, r \geq r_{0},
				\end{align*}
			we can again use \eqref{eqn:limit-condition-ed-to-gr} to deduce the first item of \eqref{smm02}. The second item of \eqref{smm02} follows by the same reasoning as the second item of \eqref{varm01} earlier on.
		\end{proof}

\subsection{Strict Monotonicity of $J_{r}$} The proofs of Proposition \ref{P: lower deviations} and \ref{P: upper deviations} used the fact that $J_{r}$ is strictly increasing.  This is proved next using the strict convexity of the functions $\{ J_{u} \}$.

	\begin{lemma} For any $r > 0$ and any $u \in \mathcal{U}_{r}$ with $\mathscr{F}_{\theta_{*}}(u;\mathbb{R}) \leq \bar{\sigma}$, the function
		\begin{gather} \label{eqn:convexity-ju}
			\zeta \mapsto J_{ u } ( \zeta ) - \frac{ 1 }{ 2 C } \zeta^2
		~ \text{is~convex~in~}[0,\infty),
		\end{gather}
	where $C = \theta_{*}^{-1} \left( \frac{ \theta^{*} - \theta_{*} }{ 2 } \right)^{2} \|W\|_{L^{\infty}([-1,1])} \bar{\sigma}$. \end{lemma}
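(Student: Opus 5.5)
The idea is to read $J_{u}$ as a Legendre transform and use the duality between an upper bound on the second derivative of a convex function and strong convexity of its conjugate. Write $\eta = -\xi \geq 0$ and set
\begin{align*}
	h(\eta) = -\eta \big( \bar{\mathscr{F}}(u;\mathbb{R}) - \bar{\sigma} \big) + \int_{-\infty}^{\infty} \Lambda( -\eta W(u) ) \, dy , \qquad \eta \geq 0 ,
\end{align*}
so that \eqref{E: definition of Ju} reads $J_{u}(\zeta) = \sup_{\eta \geq 0} \{ \eta \zeta - h(\eta) \}$. The claim is that $J_u'$ grows at rate at least $1/C$ wherever $J_u$ is finite. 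This in turn follows from the bound $h'' \leq C$ together with the first-order condition for the maximizer.

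\emph{Step 1: the curvature bound.} Since $u \in \mathcal{U}_{r}$ makes $W(u)$ integrable, we may differentiate under the integral:
\begin{align*}
	h''(\eta) = \int_{-\infty}^{\infty} \Lambda''(-\eta W(u)) W(u)^{2} \, dy .
\end{align*}
By Lemma \ref{lem:properties-lambda}, $0 < \Lambda'' \leq \big(\frac{\theta^{*}-\theta_{*}}{2}\big)^{2}$. Hence
\begin{align*}
	h''(\eta) \leq \Big(\frac{\theta^{*}-\theta_{*}}{2}\Big)^{2} \|W\|_{L^{\infty}([-1,1])} \int W(u) \, dy ,
\end{align*}
where we used $W(u)^2 \leq \|W\|_{L^\infty} W(u)$. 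Next, $\theta_{*} \int W(u) \leq \mathscr{F}_{\theta_{*}}(u;\mathbb{R}) \leq \bar{\sigma}$. Combining these gives $h'' \leq C$ with exactly the stated constant. We also get $h'' > 0$ whenever $W(u) \not\equiv 0$, which holds since $u$ passes from $-1$ to $1$. So $h$ is strictly convex and $C^{2}$ on $[0,\infty)$.

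\emph{Step 2: identify $J_u'$.} We have $h'(0) = \bar{\sigma} - \bar{\mathscr{F}}(u;\mathbb{R}) \leq 0$ by \eqref{eqn:bar-sigma}. Moreover, $h'$ is strictly increasing, with limit $\bar{\sigma} - \mathscr{F}_{\theta_{*}}(u;\mathbb{R})$ as $\eta \to \infty$ by \eqref{E: asymptotics derivative lambda]}. This is precisely the computation in the proof of Lemma \ref{L: finiteness trick}. Thus, on the interval $D$ of $\zeta \geq 0$ where $\zeta < \bar\sigma - \mathscr{F}_{\theta_*}(u;\mathbb{R})$, the supremum is attained at a unique $\eta^{*}(\zeta) \geq 0$ with $h'(\eta^{*}(\zeta)) = \zeta$. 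Outside $D$, $J_u = \infty$, apart from a possible endpoint which is handled by monotone limits and lower semicontinuity.

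On $D$, the inverse function theorem (available since $h''>0$) shows $\eta^{*}$ is $C^{1}$ with $(\eta^{*})'(\zeta) = 1/h''(\eta^{*}(\zeta)) \geq 1/C$. The envelope theorem gives $J_{u}'(\zeta) = \eta^{*}(\zeta)$. Therefore $\zeta \mapsto J_{u}'(\zeta) - \zeta/C$ is nondecreasing on $D$, so $J_{u} - \zeta^{2}/(2C)$ is convex on $D$.

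\emph{Step 3: the extended-valued function.} The function equals $+\infty$ to the right of the interval $D$. We must check that convexity survives at the endpoint $\zeta_{\max} = \bar\sigma - \mathscr{F}_{\theta_*}(u;\mathbb{R})$. There $J_u$ may be finite, with the supremum attained only as $\eta\to\infty$. Since $J_{u}$ is a supremum of affine functions, it is lower semicontinuous. A convex function on $[0,\zeta_{\max})$ extended by its lower semicontinuous limit value at $\zeta_{\max}$, and by $+\infty$ beyond, remains convex. Similarly, at $\zeta = 0$ the maximizer may be $\eta^* = 0$ (when $\bar{\mathscr{F}}(u;\mathbb{R}) = \bar\sigma$). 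This causes no problem because we only work on $[0,\infty)$ and the one-sided derivative formula still holds.

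I expect the main care to be needed not in the curvature bound but in this bookkeeping. That means justifying differentiation under the integral sign, the envelope formula with the constraint $\eta \geq 0$, and convexity up to the boundary of the effective domain.
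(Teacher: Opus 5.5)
Your proof is correct, but it converts the curvature bound into strong convexity of $J_u$ differently from the paper. The curvature bound itself is the same. The paper notes that $f(\xi)=\xi(\bar{\mathscr{F}}(u;\mathbb{R})-\bar\sigma)+\int\Lambda(\xi W(u))\,dy$ satisfies $f''\le C_u=(\frac{\theta^*-\theta_*}{2})^2\int W(u)^2\,dy$, and it bounds $C_u\le C$ exactly as you do. It then completes the square,
$J_u(\zeta)-\frac{\zeta^2}{2C_u}=\sup_{\xi\le 0}\{-\frac12(\zeta/\sqrt{C_u}+\sqrt{C_u}\,\xi)^2-(f(\xi)-\frac{C_u}{2}\xi^2)\}$,
and concludes because ``the supremum is taken over convex functions.'' You instead invert $h'$ and compute $J_u'=\eta^*$ with $(\eta^*)'=1/h''(\eta^*)\ge 1/C$.

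Your route is actually the more careful one, for two reasons.
\begin{itemize}
\item For fixed $\xi$, the terms in the paper's supremum are concave quadratics in $\zeta$, so its final step does not follow as written. It needs an extra argument, for instance writing the convex function $\frac{C_u}{2}\xi^2-f$ as a supremum of affine functions and exchanging the two suprema, which yields a supremum of functions affine in $\zeta$.
\item The paper never checks that the constraint $\xi\le 0$ is harmless. Your observation $h'(0)=\bar\sigma-\bar{\mathscr{F}}(u;\mathbb{R})\le 0$ is exactly what makes the constraint inactive for $\zeta\ge 0$. This matters: for $\zeta<h'(0)$ the same formula gives $J_u\equiv 0$, where $J_u-\frac{\zeta^2}{2C}$ is concave.
\end{itemize}

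What your approach costs is reliance on $h\in C^2$ with $h''>0$. This is available here because $\Lambda''>0$ and $W(u)\not\equiv 0$. It also needs the endpoint bookkeeping at $\zeta_{\max}=\bar\sigma-\mathscr{F}_{\theta_*}(u;\mathbb{R})$, which you handle correctly through the lower semicontinuity and monotonicity of $J_u$. A repaired duality argument, after first removing the constraint as you do, would need neither strict convexity nor the inverse function theorem.
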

	
		\begin{proof} Here it is convenient to define $ f ( \xi ) =  \xi ( \bar{\mathscr{F}}(u; \mathbb{R}) - \bar{\sigma}) + \int_{-\infty}^{\infty} \Lambda ( \xi W(u) ) \, dy  $, so that by \eqref{E: definition of Ju} the function $ J_{ u } ( - \zeta ) $ is the Fenchel-Legendre transform of $ f $. In particular $ J_{ u } $ is convex, and a standard argument shows that the semiconcavity of $ f $ implies strict convexity of the transform: From Lemma \ref{lem:properties-lambda} we know that
		\begin{align*}
			f ( \xi ) - \frac{ C_{ u } }{ 2 } \xi^2
			\quad \text{is concave for} \quad
			C_{ u } = \Big( \frac{ \theta^* - \theta_* }{ 2 } \Big)^2 \int_{ - \infty }^{ \infty } W( u )^2  \, dy .
		\end{align*}
		With this constant $ C_{ u } $ we use definition \eqref{E: definition of Ju} to compute
		\begin{align*}
			J_{ u } ( \zeta ) - \frac{ 1 }{ 2  C_{ u } } \zeta^2 
			& \stackrel{ \eqref{E: definition of Ju} }{ = } \sup_{ \xi \leq 0 } \Big\{ - \xi \zeta - f ( \xi ) - \frac{ 1 }{ 2 C_{ u } } \zeta^2  \Big\} \\
			&=  \sup_{ \xi \leq 0 } \Big\{ - \frac{ 1 }{ 2 } \big( \frac{ \zeta }{ \sqrt{ C_{ u } } } + \sqrt{ C_{ u } } \xi \big)^2 - \big( f ( \xi ) - \frac{  C_{ u } }{ 2 } \xi^2 \big)  \Big\} .
		\end{align*}
		Therefore, since the supremum is taken over convex functions, we learn that
		\begin{align*}
			J_{ u } ( \zeta ) - \frac{ 1 }{ 2 C } \zeta^2 
			\quad \text{is convex for any} \quad 
			C \geq C_{ u } = \Big( \frac{ \theta^* - \theta_* }{ 2 } \Big)^2 \int_{ - \infty }^{ \infty } W( u )^2  \, dy .
		\end{align*}
		Since $\int_{-\infty}^{\infty} W(u)^{2} \, dy \leq \theta_{*}^{-1} \|W\|_{L^{\infty}([-1,1])} \mathscr{F}_{\theta_{*}}(u;\mathbb{R})$, we can take $C$ to be the constant $C = \theta_{*}^{-1} \left( \frac{ \theta^{*} - \theta_{*} }{ 2 } \right)^{2} \|W\|_{L^{\infty}([-1,1])} \bar{\sigma}$, as claimed.
	\end{proof}

\begin{proof}[Proof of Proposition \ref{cor:monotonicity-jr}]
	First, recall from the proof of Lemma \ref{L: finiteness trick} that if $\mathscr{F}_{\theta_{*}}(u;\mathbb{R}) > \bar{\sigma}$, then $J_{u} \equiv \infty$.  Therefore, we can add the energy constraint to $J_{r}$ as follows:
		\begin{equation*}
			J_{r}(\zeta) = \inf \{ J_{u}(\zeta) \, \mid \, u \in \mathcal{U}_{r}, \, \, \mathscr{F}_{\theta_{*}}(u;\mathbb{R}) \leq \bar{\sigma} \} \quad \text{for any} \, \, \zeta \geq 0.
		\end{equation*}
	Next, notice that, for such a $u$, the convexity of $ J_{ u } ( \zeta ) - \frac{ 1 }{ 2 C } \zeta^2 $ is equivalent to
	\begin{align*}
		J_{ u } ( ( 1 - t ) \zeta_1 + t \zeta_2 ) + \frac{ 1 }{ 2 C } t ( 1 - t ) ( \zeta_2 - \zeta_1 )^2  
		\leq (1 - t ) J_{ u } ( \zeta_1 ) + t J_{ u } ( \zeta_2 ) 
		~ \text{for} ~ 0 < t < 1 .
	\end{align*}
	Moreover, since the term $ - \xi \zeta $ in definition \eqref{E: definition of Ju} is nondecreasing in $ \zeta $ the function $ J_{ u } $ is nondecreasing, so that $ J_{ u } ( \min \{ \zeta_1, \zeta_2 \} ) \leq J_{ u } ( ( 1 - t ) \zeta_1 + t \zeta_2 ) $. Together with the above inequality this implies
	\begin{align*}
		\frac{ 1 }{ 2 C } ( 1 - t ) ( \zeta_2 - \zeta_1 )^2 + J_{ u } ( \zeta_1 ) 
		\leq J_{ u } ( \zeta_2 ) 
		\quad \text{for} ~ 0 < t < 1  ~ \text{and} ~ \zeta_1 < \zeta_2 .
	\end{align*}
	Optimizing in $ u $ and $ t $ yields the first claim by definition of $ J_{ r } $ via \eqref{E: definition of Jr}. Taking the limit $ r \rightarrow \infty $ and recalling \eqref{eqn:defn-j-alternative} we see that $ J $ has the same property. 
\end{proof}

\subsection{Proof of Theorem \ref{T: main deviations}} Finally, we combine Propositions \ref{P: large deviations}, \ref{P: lower deviations}, and \ref{P: upper deviations} to prove our main theorem and its corollary.  To this end, define the function $\check{\sigma} : [0,\infty) \to [\sigma_{*},\bar{\sigma}]$ by 
	\begin{align}\label{eqn:defn-sigma-check}
		\check{\sigma}(\kappa) = \bar{\sigma} - \inf \left\{ \zeta \geq 0 \, \mid \, J(\zeta) \geq \kappa \right\}.
	\end{align}
Since $J$ is strictly increasing in the set $ [ 0 , \bar\sigma - \sigma_{ * } ] $ and infinite in $(\bar{\sigma} - \sigma_{*},\infty)$, see Proposition \ref{P: large deviations}, $\check{\sigma}$ is well-defined, continuous, and nonincreasing.
        
        \begin{proof}[Proof of Theorem \ref{T: main deviations}]
Let $I \subseteq \mathbb{R}$ be an open interval. By stationarity, we can assume that $I$ is centered at the origin.  Notice also that by reflection symmetry the limits of $ \sigma_{\epsilon,\delta(\epsilon)}(1; I ) $ and $ \sigma_{\epsilon,\delta(\epsilon)} (-1; I ) $ agree. Thus, we only compute the former. 

Let us first handle the nondegenerate case $ \kappa \in ( 0, \infty ) $: Combining Proposition \ref{P: lower deviations} and \ref{P: upper deviations} from above, we learn that due to condition \eqref{E: log condition}, for any $ \zeta_{ - } < \zeta_{ + } $,
\begin{align*}
	J ( \zeta_{ - } ) < \kappa < J ( \zeta_{ + } ) 	
	\quad \text{implies} \quad
	\lim_{ \epsilon \downarrow 0 } \mathbb{P} \{  \bar{\sigma} - \zeta_{ + } \leq \sigma_{\epsilon,\delta(\epsilon)}(1; I ) \leq \bar{\sigma} - \zeta_{ - } \} 
	= 1.
\end{align*}
Since $J$ is strictly increasing (Proposition \ref{P: large deviations}), this readily implies that
\begin{align*}
	 \lim_{ \epsilon \downarrow 0 } \sigma_{\epsilon,\delta(\epsilon)}(1; I )
	 = \bar \sigma - \inf \left\{ \zeta \geq 0 \, \mid \, J(\zeta) \geq \kappa \right\}
	 \stackrel{ (\ref{eqn:defn-sigma-check}) }{ = }  \check\sigma ( \kappa )
	\quad \text{in probability}.
\end{align*}

The two degenerate cases $ \kappa \in \{ 0 , \infty \} $ follow by similar reasoning.  First, if $\kappa = 0$, then $J(\zeta) > J(0) = 0$ for any $\zeta > 0$, hence Proposition \ref{P: upper deviations} gives
	 
	\begin{align*}
		\lim_{ \epsilon \downarrow 0 } \mathbb{P} \{  \bar{\sigma} - \zeta \leq \sigma_{\epsilon,\delta(\epsilon)}(1; I ) \} = 1 \quad \text{for each} \, \, \zeta > 0.
	\end{align*}
At the same time, by \eqref{E: what we want to prove} (see \cite[Theorem 2]{part1}), the $\limsup$ of $\sigma_{\epsilon,\delta(\epsilon)}(1;I)$ is almost surely no larger than $\bar{\sigma}$, so $\sigma_{\epsilon,\delta(\epsilon)}$ converges in probability to $\bar{\sigma}$ in this case.
Second, if $\kappa = \infty$, then since $J(\zeta) < \infty$ for each $\zeta \in (0,\bar{\sigma} - \sigma_{*})$ by Proposition \ref{P: lower deviations}, 
	\begin{align*}
		\lim_{ \epsilon \downarrow 0 } \mathbb{P} \{  \sigma_{\epsilon,\delta(\epsilon)}(1; I ) \leq \sigma_{*} + \nu \} = 1 \quad \text{for each} \, \, \nu \in (0,\infty).
	\end{align*}
Since $\theta \geq \theta_{*}$ pointwise in $\mathbb{R}$, the formula \eqref{E: lower energy variational formula} implies that the $\liminf$ of $\sigma_{\epsilon,\delta(\epsilon)}(1;I)$ is almost surely larger than $\sigma_{*}$.  Therefore, $\sigma_{\epsilon,\delta(\epsilon)}(1;I) \to \sigma_{*}$ in probability in this case.  To see that these limits are consistent with the definition of $\check{\sigma}$, notice that since $J$ is nonnegative, we have $\check{\sigma}(0) = \bar{\sigma}$, and item (ii) of Proposition \ref{P: large deviations} implies that $\check{\sigma}(\infty) = \sigma_{*}$. Finally, since $J(0) = 0$ and $J$ is continuous at zero by item (i) of Proposition \ref{P: large deviations}, it follows from the definition of $\check{\sigma}$ that $\check{\sigma}(\kappa) < \bar{\sigma}$ for each $\kappa > 0$.
\end{proof}

Next, we show how the theorem determines the possible $\Gamma$-limits of $\mathscr{F}_{\epsilon,\delta}$, that is, we prove Corollaries \ref{C: gamma limits} and \ref{C: characterize gamma limits}.

Let us recall the notion of $\Gamma$-convergence in probability from \cite{part1}.  Suppose $I \subseteq \mathbb{R}$ is a bounded open interval.  In \cite[Appendix B]{part1}, we prove that there is a metric space $(\mathcal{M},d_{\Gamma})$ such that $\mathcal{M}$ is a set of l.s.c.\ functionals on $L^{1}(I)$;  convergence in $(\mathcal{M},d_{\Gamma})$ is equivalent to $\Gamma$-convergence; and $\mathbb{P} \{ \mathscr{F}_{\epsilon,\delta}(\cdot \, ; I ) \in \mathcal{M} \, \, \text{for each} \, \, \epsilon \leq 1 \, \, \text{and} \, \, \delta > 0 \} = 1$.  When we say that a sequence $\{ \mathscr{G}_{k} \}$ of random elements of $\mathcal{M}$ $\Gamma$-converges in probability to some limit $\mathscr{G}$, we mean that $d_{\Gamma}(\mathscr{G}_{k},\mathscr{G}) \to 0$ in probability as $k \to \infty$. 

\begin{proof}[Proof of Corollary \ref{C: gamma limits}] If $\epsilon^{-1} \delta(\epsilon) \to 0$ and $\epsilon^{-1} \delta(\epsilon) |\log \epsilon| \to \kappa$ as $\epsilon \to 0$, then Theorem \ref{T: main deviations} shows that, for each $e \in \{-1,1\}$ and any bounded open interval $I \subseteq \mathbb{R}$,  $\sigma_{\epsilon,\delta(\epsilon)}(e;I) \to \check{\sigma}(\kappa)$ in probability as $\epsilon \to 0$. Therefore, by \cite[Theorem 4]{part1}, for any such interval $I$, the functional $\mathscr{F}_{\epsilon,\delta(\epsilon)}(\cdot \, ;I)$ $\Gamma$-converges in probability to $\check{\mathscr{E}}_{\kappa}(\cdot \, ;I)$, that is, $d_{\Gamma} ( \mathscr{F}_{\epsilon,\delta(\epsilon)}(\cdot \,;I), \check{\mathscr{E}}_{\kappa}(\cdot \,;I)) \to 0$ in probability as $\epsilon \to 0$.\end{proof}

\begin{proof}[Proof of Corollary \ref{C: characterize gamma limits}] Let $\epsilon \mapsto \delta(\epsilon)$ be any scaling such that $\epsilon^{-1}\delta(\epsilon) \to 0$ as $\epsilon \to 0$.  Let $I \subseteq \mathbb{R}$ be a bounded open interval and $\mathscr{E}(\cdot \, ; I )$ be a l.s.c.\ functional on $L^{1}(I)$.

Suppose there is a sequence $\{\epsilon_{j}\}_{j \in \mathbb{N}}$ such that 
    \begin{align*}
    \epsilon_{j} \to 0 \quad \text{and} \quad d_{\Gamma}( \mathscr{F}_{\epsilon_{j},\delta(\epsilon_{j})}( \cdot \, ; I ) , \mathscr{E} ( \cdot \, ; I ) ) \to 0 \quad \text{in probability} \quad \text{as} \, \, j \to \infty.
    \end{align*}
    We show that $ \check{\sigma} ( \epsilon_{j}^{-1} \delta(\epsilon_{j}) | \log \epsilon_{j} | ) $ has a limit, by showing that any subsequence has a further subsequence that converges to a limit that is independent of the original subsequence. To this end, consider a subsequence $\{\epsilon_{j_{k}}\}_{k \in \mathbb{N}}$.  After passing to yet another subsequence, there is no loss of generality in assuming that there is a $\kappa \in [0,\infty]$ such that 
    \begin{align*}
        \lim_{ k \to \infty } \epsilon_{j_{k}}^{-1} \delta(\epsilon_{j_{k}}) | \log \epsilon_{j_{k}} | = \kappa . 
    \end{align*}
By the previous corollary, $d_{\Gamma}( \mathscr{F}_{\epsilon_{j_{k}},\delta(\epsilon_{j_{k}})} ( \cdot \, ; I ) , \check{\mathscr{E}}_{\kappa}(\cdot \, ; I ) ) \to 0$ in probability as $k \to \infty$.  This implies $\mathscr{E}( \cdot \, ; I ) = \check{\mathscr{E}}_{\kappa}(\cdot \, ; I )$.  Further, by continuity, $\check{\sigma}(\epsilon_{j_{k}}^{-1} \delta(\epsilon_{j_{k}}) | \log \epsilon_{j_{k}} |) \to \check{\sigma}(\kappa)$. Since $ \check{\sigma}(\kappa) | I | = \mathscr{E} ( \boldsymbol{1}_{ ( - \infty, x ) } - \boldsymbol{1}_{ ( x, \infty ) }; I ) $ for any $x \in I$, the limit only depends on $ \mathscr{E} $ and is thus independent of the choice of subsequences. Therefore, it holds for the original sequence, that is, $\check{\sigma}(\epsilon_{j}^{-1} \delta(\epsilon_{j}) | \log \epsilon_{j} | )$ converges to $\check{\sigma}(\kappa)$ as $j \to \infty$.

Conversely, suppose that $\{\epsilon_{j}\}$ is a sequence such that $\epsilon_{j} \to 0$ as $j \to \infty$ and  $\check{\sigma}(\epsilon_{j}^{-1} \delta(\epsilon_{j}) | \log \epsilon_{j} | ) \to \check{\sigma}(\kappa)$ for some $\kappa \in [0,\infty]$.  We claim that this implies that $d_{\Gamma}( \mathscr{F}_{\epsilon_{j},\delta(\epsilon_{j})}( \cdot \, ; I ) , \check{\mathscr{E}}_{\kappa}(\cdot \, ; I ) ) \to 0$ in probability as $j \to \infty$.  Indeed, let $\{ \epsilon_{j_{k}} \}$ be a subsequence such that $\epsilon^{-1}_{j_{k}} \delta(\epsilon_{j_{k}}) | \log \epsilon_{j_{k}} | \to \kappa_{0}$ as $k \to \infty$ for some $\kappa_{0} \in [0,\infty]$.  By assumption, this constrains $\kappa_{0}$ via the equation $\check{\sigma}(\kappa_{0}) = \check{\sigma}(\kappa)$.  As in the previous paragraph, this implies $\check{\mathscr{E}}_{\kappa_{0}}(\cdot \, ; I ) = \check{\mathscr{E}}_{\kappa}( \cdot \, ; I )$.  Therefore, by the previous corollary, $d_{\Gamma}( \mathscr{F}_{\epsilon_{j_{k}},\delta(\epsilon_{j_{k}})} ( \cdot \, ; I ) , \check{\mathscr{E}}_{\kappa}( \cdot \, ; I ) ) \to 0$ in probability as $k \to \infty$.  Since the subsequence was chosen arbitrarily, we conclude that the full sequence $\{\mathscr{F}_{\epsilon_{j},\delta(\epsilon_{j})}(\cdot \, ; I ) \}$ $\Gamma$-converges in probability to $\check{\mathscr{E}}_{\kappa}(\cdot \, ; I )$ as $j \to \infty$.    \end{proof}

\section{Generalities on Rare Events}

In this section, to prepare for the proofs of Theorems \ref{T: algebraic correlation counterexample} and \ref{T: almost periodic counterexample}, we establish an intermediate result.  Roughly speaking, it shows that if $\theta$ has the form $\theta(x) = \theta^{\text{stripe}}(x_{1}) \tilde{\theta}(x)$ for some one-dimensional profile $\theta^{\text{stripe}}$ and a nice-enough function $\tilde{\theta}$, then a sufficient condition for the limit of $\sigma_{\epsilon,\delta(\epsilon)} ( e_1 ; \cdot ) $ to lie below the homogenized value is for $\theta^{\text{stripe}}$ to attain its minimum value on arbitrarily long intervals. 

	\begin{theorem} \label{T: general rare events} Fix $\theta_{*}, \theta^{*} \in (0,\infty)$ with $\theta_{*} < \theta^{*}$.  Suppose there is a scaling $\epsilon \mapsto \delta(\epsilon)$ and a  deterministic function $\theta^{\text{stripe}} : \mathbb{R} \to [\theta_{*},\theta^{*}]$ such that $\epsilon^{-1} \delta(\epsilon) \to 0$ as $\epsilon \to 0$ and the following property holds: For any $M \geq 1$ and any $\varrho' > 0$, for all sufficiently small $\epsilon > 0$, there is a $T_{\epsilon} \in \mathbb{R}$ such that 
		\begin{align} \label{E: stripe condition}
			\theta^{\text{stripe}}( \delta(\epsilon)^{-1} s ) = \theta_{*} \quad \text{for each} \, \, s \in [ T_{\epsilon} - M \epsilon, T_{\epsilon} + M \epsilon ] \subseteq [-\varrho', \varrho'].
		\end{align}
	Assume that $\tilde{\theta} : \mathbb{R}^{d} \to [0,\infty)$ satisfies either
		\begin{itemize}
			\item[(i)] $\tilde{\theta}$ is the random checkerboard of the form
				\begin{align*}
					\tilde{\theta}(x) = \sum_{ z \in \mathbb{Z}^{d} } \tilde{\Theta}_{z} \mathbf{1}_{z + (-1/2,1/2)^{d} }(x)
				\end{align*}
			for some i.i.d.\ positive, bounded random variables $\{ \tilde{\Theta}_{z} \}_{z \in \mathbb{Z}^{d}}$ with $\mathbb{E} [ \tilde{\Theta}_{z} ] = 1$, or 
			\item[(ii)] $\tilde{\theta}$ is a deterministic $\mathbb{Z}^{d}$-periodic function with $\int_{\mathbb{T}^{d}} \tilde{\theta} = 1$.
		\end{itemize}
	Then for any $\varrho > 0$,
		\begin{align*}
			\limsup_{ \epsilon \downarrow 0 } \sigma_{ \epsilon, \delta(\epsilon) } ( e_{1} ; [-\varrho,\varrho]^{d} ) \leq \sigma_{W} \sqrt{ \theta_{*} } (2\varrho)^{d-1},
		\end{align*}
	where the $\limsup$ is interpreted in terms of convergence in probability in case (i).
	\end{theorem}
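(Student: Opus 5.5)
We build an explicit competitor for $\sigma_{\epsilon,\delta(\epsilon)}(e_1;[-\varrho,\varrho]^d)$ whose interface sits inside the stripe $\{x : |x_1 - T_\epsilon| \leq M\epsilon\}$. There $\theta(\delta^{-1}x) = \theta_*\,\tilde\theta(\delta^{-1}x)$. By Remark \ref{rmk: triviality} (whose arguments carry over to $d$ dimensions), we may assume the boundary datum $q$ satisfies \eqref{A: triviality}. Fix $M \geq 1$ and a small $\varrho' > 0$, and let $T_\epsilon$ be given by \eqref{E: stripe condition}.

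\textbf{Step 1 (a one-dimensional competitor).} Let $q_M : \mathbb{R} \to [-1,1]$ be a near-optimal profile for $\mathscr{F}_{\theta_*}$, with $q_M(s) = \mathrm{sgn}(s)$ for $|s| \geq M$ and
\[
\int \Big(\tfrac12 |q_M'|^2 + \theta_* W(q_M)\Big)\,ds \leq \sigma_W\sqrt{\theta_*} + \eta(M), \qquad \eta(M) \to 0 .
\]
Set $v(x) = q_M(\epsilon^{-1}(x_1 - T_\epsilon))$ on a slightly smaller box. The boundary condition of $\sigma_{\epsilon,\delta}$ places the interface at $x_1 = 0$, not at $T_\epsilon$, so $v$ must be corrected near the lateral faces $\{|x_j| = \varrho\}$, $j \geq 2$. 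Inside a boundary layer of width $\varrho'$ we interpolate between the two planar profiles at heights $T_\epsilon$ and $0$. Since $|T_\epsilon| \leq \varrho'$, this costs $O(\varrho'\varrho^{d-2})$, uniformly in $\epsilon$. Alternatively, we compare with a monotonicity argument as in the proof of Proposition \ref{P: lower deviations}, using a sub-box centered at $(T_\epsilon,0,\dots,0)$ of side $2\varrho - 2\varrho'$. The total error is $O(\varrho')$, and at the end we send $\varrho' \to 0$.

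\textbf{Step 2 (averaging $\tilde\theta$).} On the stripe the energy of $v$ equals
\[
\int_{[-\varrho,\varrho]^{d-1}} \int \Big(\tfrac{\epsilon}{2}|\partial_1 v|^2 + \tfrac{1}{\epsilon}\theta_*\,\tilde\theta(\delta^{-1}x)\, W(v)\Big)\,dx_1\,dx' .
\]
The gradient term contributes exactly $(2\varrho)^{d-1}\int \frac12 |q_M'|^2$. In the potential term, after the change of variables $s = \epsilon^{-1}(x_1 - T_\epsilon)$, we need
\[
\int_{[-\varrho,\varrho]^{d-1}}\int_{-M}^{M} \tilde\theta\big(\delta^{-1}(T_\epsilon + \epsilon s, x')\big)\, W(q_M(s))\,ds\,dx' \;\longrightarrow\; (2\varrho)^{d-1}\int W(q_M)\,ds .
\]
In case (ii), $\tilde\theta$ is periodic and $\delta^{-1}$ is the fine scale. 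The integrand $W(q_M(s))$ is a fixed function of $(s,x')$, even in the variables $y = (\epsilon s, x')$ at scale $\delta \ll \epsilon$, so the weak convergence of $\tilde\theta(\delta^{-1}\cdot)$ to its mean $1$ applies. The shift by $T_\epsilon$ is harmless because the Riemann–Lebesgue-type averaging is uniform in translations for periodic functions; we can first approximate $W\circ q_M$ by step functions on cells much larger than $\delta$.

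\textbf{Step 3 (the random case and the main obstacle).} In case (i), the shift $T_\epsilon$ is deterministic, since $\theta^{\text{stripe}}$ is deterministic. So the random variable
\[
Z_\epsilon = \int \tilde\theta(\delta^{-1}\cdot)\, W(v)\,\epsilon^{-1}\,dx
\]
has mean exactly $(2\varrho)^{d-1}\int W(q_M)$ by stationarity and $\mathbb{E}[\tilde\Theta_z] = 1$. Its variance is a sum over the $O(\epsilon\,\delta^{-d})$ cells meeting the stripe, with independent contributions each of size $O(\delta^d\epsilon^{-1})$. Hence
\[
\mathrm{Var}(Z_\epsilon) \lesssim \epsilon\,\delta^{-d}\,(\delta^d\epsilon^{-1})^2 = \delta^d\epsilon^{-1} \to 0 ,
\]
since $\delta \ll \epsilon$ and $d \geq 1$. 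Chebyshev then gives convergence in probability. The key point, and the one I expect to require care, is that the stripe location is chosen deterministically and not adapted to the randomness; this is what makes a plain second-moment bound suffice without any union bound. Getting the lateral boundary correction of Step 1 uniform in $\epsilon$ is the other delicate point.

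\textbf{Conclusion.} Combining the steps, with high probability (or deterministically in case (ii)),
\[
\sigma_{\epsilon,\delta}\big(e_1;[-\varrho,\varrho]^d\big) \leq (2\varrho)^{d-1}\big(\sigma_W\sqrt{\theta_*} + \eta(M)\big) + C\varrho' + o(1) .
\]
Sending $\epsilon \to 0$, then $M \to \infty$ and $\varrho' \to 0$, yields the claim.
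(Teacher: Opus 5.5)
Your proof is correct. It follows the paper's outline: a planar profile centred at $T_\epsilon$ in the stripe where $\theta^{\text{stripe}}(\delta^{-1}\cdot)=\theta_*$, an $O(\varrho')$ boundary correction, and averaging of $\tilde\theta$ across the stripe, done by Riemann--Lebesgue in case (ii) just as in the paper. The difference is that you replace the paper's two cited tools by elementary arguments.

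For the boundary data, the paper only records $\|u_\epsilon-\bar u_{e_1}\|_{L^1}\lesssim\varrho'\varrho^{d-1}$. It then invokes the fundamental estimate of $\Gamma$-convergence \cite[Theorem 6]{morfe}, which requires no construction and no assumption on $q$. Your hands-on version works provided the interpolation is a genuine tilt $q_M(\epsilon^{-1}(x_1-\phi(x')))$, with $\phi$ going from $T_\epsilon$ to $0$ across a frame of width $\varrho'$ (slope at most $1$). This must be followed by a thin layer that changes the shape $q_M$ into $q$ at height $0$. A pointwise convex combination of the profiles at heights $T_\epsilon$ and $0$ would not do: it would cost of order $\epsilon^{-1}|T_\epsilon|\varrho'\varrho^{d-2}$. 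Your ``monotonicity'' alternative is really the same construction. In $d\ge 2$, shrinking the box laterally gives no inequality by itself, and filling the frame around the sub-box requires exactly this tilt. Using a compactly supported $q_M$ also removes the tail term $II_\epsilon$, which the paper has to bound by $\eta(M)$ for the exact minimizer $q_*$.

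In case (i), the paper uses a large-deviations estimate $\exp(-C\epsilon/\delta^d)$; you use a Chebyshev bound ${\rm Var}(Z_\epsilon)\lesssim\delta^d/\epsilon$ over the same $O(\epsilon\delta^{-d})$ independent cells. Since $T_\epsilon$ is deterministic and only convergence in probability is claimed, the second moment suffices. The exponential rate would matter only for a union bound over many stripe positions or for almost sure statements. Your direct computation of mean and variance also avoids the paper's step of translating away the shift $\delta^{-1}T_\epsilon e_1$ ``by stationarity''. For a checkerboard that is only $\mathbb{Z}^d$-stationary, that step is not literally an identity in law when $\delta^{-1}T_\epsilon\notin\mathbb{Z}$, although the concentration bound itself is unaffected.
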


	\begin{proof} There are three steps of the proof.  We first argue that the limit of $\sigma_{\epsilon,\delta}$ can be estimated using a certain natural class of boundary condition-unconstrained competitors.  In the second step, we show that the energy of the competitors converges to the desired upper bound $\sigma_{W} \sqrt{ \theta_{*} } ( 2 \varrho )^{d-1}$ conditional on some averaging results involving $\tilde{\theta}$.  In the final step, we prove these averaging results.

    \textit{Step 1: Reformulating the Limit.} Given a unit vector $n \in S^{d-1}$,  define $\bar{u}_{n} : \mathbb{R}^{d} \to \{-1,1\}$ by setting $\bar{u}_{n}(x) = 1$ if $x \cdot n \geq 0$ and $\bar{u}_{n}(x) = -1$ if $x \cdot n < 0$.  Fix a function $q_{*} : \mathbb{R} \to [-1,1]$ such that $q_{*}(\pm \infty) = \pm 1$ and
		\begin{align*}
			&\int_{-\infty}^{\infty} \left( \frac{1}{2} | q_{*}' |^{2} + \theta_{*} W ( q_{*} ) \right) \, dy \\
			&\quad= \min \left\{ \int_{-\infty}^{\infty} \left( \frac{1}{2} | v' |^{2} + \theta_{*} W ( v ) \right) \, dy \, \mid \, v : \mathbb{R} \to [-1,1], \, \, v(\pm \infty) = \pm1 \right\} = \sigma_{W} \sqrt{ \theta_{*} }.
		\end{align*}
    To obtain the desired bound on the $\limsup$ of $\sigma_{\epsilon,\delta(\epsilon)}(e_{1};[-\varrho,\varrho]^{d})$, it suffices to prove that there is a deterministic function $M \mapsto \eta(M)$ such that $\eta(M) \to 0$ as $M \to \infty$ and, for each $M \geq 1$, 
        \begin{align} \label{E: limsup bound generalities}
            \limsup_{ \epsilon \downarrow 0 } \sigma_{ \epsilon, \delta(\epsilon) } ( e_{1} ; [-\varrho,\varrho]^{d} ) \leq (2 \varrho)^{d-1} \int_{-M}^{M} \left( \frac{1}{2} |q_{*}'(t)|^{2} + \theta_{*} W(q_{*}(t)) \right) \, dt + \eta(M),
        \end{align}
    or, more precisely, in case (i), for each $M \geq 1$ and $\nu > \eta(M)$,        
        \begin{align*}
            \lim_{ \epsilon \downarrow 0 } \mathbb{P} \left\{ \sigma_{ \epsilon, \delta(\epsilon) } ( e_{1} ; [-\varrho,\varrho]^{d} ) - (2 \varrho)^{d-1} \int_{-M}^{M} \left( \frac{1}{2} |q_{*}'(t)|^{2} + \theta_{*} W(q_{*}(t)) \right) \, dt \geq \nu \right\} = 0.
        \end{align*}
        
    Let $M \geq 1$ and $\varrho' \in (0,\varrho)$.  By assumption, for sufficiently small $\epsilon$, there is a $T_{\epsilon} \in \mathbb{R}$ such that \eqref{E: stripe condition} holds.  Define $u_{\epsilon} : \mathbb{R}^{d} \to [-1,1]$ by
		\begin{align*}
			u_{\epsilon}(x) = q_{*} ( \epsilon^{-1} ( x \cdot e_{1} - T_{\epsilon} ) ).
		\end{align*}
	Since $T_{\epsilon} \in [-\varrho',\varrho']$ by hypothesis,
		\begin{align} \label{E: limiting useful estimate}
			\limsup_{ \epsilon \downarrow 0 } \| u_{\epsilon} - \bar{u}_{e_{1}} \|_{L^{1}([-\varrho,\varrho]^{d})} \leq (2 \varrho') (2\varrho)^{d-1} .
		\end{align}
	We will show below that there is a deterministic function $M \mapsto \eta(M)$ such that $\eta(M) \to 0$ as $M \to \infty$ and
		\begin{align} \label{E: energy striped construction}
			\limsup_{ \epsilon \downarrow 0 } \mathscr{F}_{\epsilon,\delta(\epsilon)} ( u_{\epsilon} ; [-\varrho,\varrho]^{d} ) \leq ( 2 \varrho )^{d-1} \int_{-M}^{M} \left( \frac{1}{2} | q_{*}'(t) |^{2} + W(q_{*}(t)) \right) \, dt + \eta(M),
		\end{align}
	where the limit above holds in the sense of convergence in probability in case (i).  This only becomes useful, however, once we modify $u_{\epsilon}$ so that it satisfies the needed boundary conditions in the definition of $\sigma_{\epsilon,\delta(\epsilon)}$.

    This can be done via an application of the fundamental estimate of $\Gamma$-convergence, specifically \cite[Theorem 6]{morfe}.  Applying that result in conjunction with \eqref{E: limiting useful estimate}, we deduce that  there is a deterministic constant $C_{0} > 0$, which depends on $\theta_{*}$, $\theta^{*}$, and $W$ but not $\rho$ or $\rho'$, such that, for any $\epsilon > 0$ small enough, there is a function $\tilde{u}_{\epsilon} \in H^{1}([-\varrho,\varrho]^{d};[-1,1])$ satisfying $\tilde{u}_{\epsilon}(x) = q(\epsilon^{-1} x \cdot e_{1} )$ for $x \in \partial [-\varrho,\varrho]^{d}$ and such that
        \begin{align*}
            \limsup_{ \epsilon \downarrow 0 } \mathscr{F}_{\epsilon,\delta(\epsilon)} (\tilde{u}_{\epsilon}; [-\varrho,\varrho]^{d}) \leq \limsup_{ \epsilon \downarrow 0 } \mathscr{F}_{\epsilon,\delta(\epsilon)}(u_{\epsilon}; [-\varrho,\varrho]^{d}) + C_{0} ( 2 \rho' ) ( 2 \rho )^{d-1} .  
        \end{align*}
    Due to the choice of boundary conditions, this implies
        \begin{align*}
            \limsup_{ \epsilon \downarrow 0 } \sigma_{\epsilon,\delta(\epsilon)}(e_{1};[-\varrho,\varrho]^{d}) \leq \limsup_{ \epsilon \downarrow 0 } \mathscr{F}_{\epsilon,\delta(\epsilon)}(u_{\epsilon}; [-\varrho,\varrho]^{d}) + C_{0} ( 2 \rho' ) ( 2 \rho )^{d-1},
        \end{align*}
     where the inequality holds with probability one in case (i).  Combined with \eqref{E: energy striped construction}, since $\rho'$ can be taken arbitrarily small, this implies \eqref{E: limsup bound generalities}.
	
	\textit{Step 2: Main Argument.} We now determine the limit of $\mathscr{F}_{\epsilon,\delta(\epsilon)}(u_{\epsilon} ; [-\varrho,\varrho]^{d})$.  To that end, let us decompose the energy as follows:
		\begin{align*}
			\mathscr{F}_{\epsilon,\delta(\epsilon)} &(u_{\epsilon}; [-\varrho,\varrho]^{d}) = I_{\epsilon} + II_{\epsilon}, 
		\end{align*}
	where
		\begin{align*}
			I_{\epsilon} &= \epsilon^{-1} \int_{U_{\epsilon}} \left( \frac{1}{2} q_* '(\epsilon^{-1}(x_{1} - T_{\epsilon}))^{2} + \theta_{*} \tilde{\theta}(\delta(\epsilon)^{-1} x) W(q_* (\epsilon^{-1}(x_{1} - T_{\epsilon}))) \right) \, dx, \\
			U_{\epsilon} &= [T_{\epsilon} - M \epsilon, T_{\epsilon} + M \epsilon] \times [- \varrho, \varrho]^{d - 1}, \\
			0 \leq II_{\epsilon} &\leq \left( 1 + \Lambda \|\tilde{\theta}\|_{L^{\infty}(\mathbb{R}^{d})} \right) \frac{ (2\varrho)^{d-1} }{ \epsilon } \int_{\mathbb{R} \setminus [- M \epsilon, M \epsilon]} \left( \frac{1}{2} q_{*} '( \epsilon^{-1} s )^{2} +W(q_{*} ( \epsilon^{-1} s )) \right) \,ds.
		\end{align*}
	If we define $\eta(M)$ by 
		\begin{equation*}
			\eta(M)  = \left( 1 + \Lambda \|\tilde{\theta}\|_{L^{\infty}(\mathbb{R}^{d})} \right) (2 \varrho)^{d-1} \int_{\mathbb{R} \setminus [-M,M]} \left(\frac{1}{2} q_{*} '(t)^{2} + W(q_{*} (t)) \right) \, dt,
		\end{equation*}
	then, upon changing variables according to $t = \epsilon^{-1} s$, we observe that 
		\begin{align*}
			II_{\epsilon} \leq \eta(M),
		\end{align*}
	where the inequality holds with probability one in case (i), and $\eta(M) \to 0$ as $M \to \infty$.  Thus, it only remains to study the asymptotics of $I_{\epsilon}$.\
	
	\textit{Step 3: Asymptotics of $I_{\epsilon}$.} Concerning $I_{\epsilon}$, after changing variables in the potential term and recalling the notation $e_{1} = (1,0,0,\dots,0)$, we observe that 
		\begin{align*}
			 &\epsilon^{-1} \int_{U_{\epsilon}} \tilde{\theta}(\delta(\epsilon)^{-1} x) W(q_* (\epsilon^{-1}(x_{1} - T_{\epsilon}))) \, dx \\
			 &\quad = \epsilon^{d-1} \int_{\epsilon^{-1} (U_{\epsilon} - T_{\epsilon} e_{1})} \tilde{\theta}(\delta(\epsilon)^{-1} (\epsilon y + T_{\epsilon} e_{1})) W(q_*(y_{1})) \, dy.
		\end{align*}
	The rest of the analysis involves using the assumptions on $\tilde{\theta}$ to prove that
		\begin{align} \label{E: stripe condition need to prove}
			\lim_{ \epsilon \downarrow 0 } \epsilon^{d-1} \int_{\epsilon^{-1} (U_{\epsilon} - T_{\epsilon} e_{1})} \tilde{\theta}(\delta(\epsilon)^{-1} (\epsilon y + T_{\epsilon} e_{1})) W(q_*(y_{1})) \, dy = (2\varrho)^{d-1} \int_{-M}^{M} W(q_{*}(t)) \, dt,
		\end{align}
	where the limit is interpreted in terms of convergence in probability in case (i).  Note that the gradient term similarly converges
		\begin{align*}
			\lim_{ \epsilon \downarrow 0 }\epsilon^{-1} \int_{U_{\epsilon}} \frac{1}{2} q_{*} '(\epsilon^{-1}(x_{1} - T_{\epsilon}))^{2} \, dx = (2\varrho)^{d-1} \int_{-M}^{M} \frac{1}{2} q_{*}'(t)^{2} \, dt.
		\end{align*}
	Therefore, once \eqref{E: stripe condition need to prove} is established, we will have proved \eqref{E: energy striped construction} as claimed.

	\textit{Step 3(a): Random Checkerboard.} In this case, since $\tilde{\theta}$ is stationary and $\mathbb{E}[\tilde{\theta}(0)] = 1$, 
		\begin{align*}
			&\mathbb{P} \left\{ \left| \epsilon^{d-1} \int_{\epsilon^{-1} (U_{\epsilon} - T_{\epsilon}e_{1})} \{\tilde{\theta}(\delta(\epsilon)^{-1} (\epsilon y + T_{\epsilon}e_{1})) - 1 \} W(q_*(y_{1})) \, dy \right| > \mu \right\} \\
			&\quad = \mathbb{P} \left\{ \left| \epsilon^{d-1} \int_{ [-M,M] \times [-\epsilon^{-1} \varrho, \epsilon^{-1} \varrho]^{d-1} } \{ \tilde{\theta}(\delta(\epsilon)^{-1}\epsilon y) - \mathbb{E} [ \tilde{\theta}(0) ] \} W(q_*(y_{1})) \, dy \right| > \mu \right\}.
		\end{align*}
	Thus, a large deviations argument analogous to the one in the proof of Lemma \ref{L: basic LDP} (the difference being that --- in the language of that proof --- we see on the order of $ R^{d-1} / \gamma^d  = \epsilon / \delta^d $ instead of $ 1 / \gamma = \epsilon / \delta $ random variables) implies that  there is a constant $C(\mu,M,\varrho) > 0$ such that, for sufficiently small $\epsilon$,
		\begin{align*}
			&\mathbb{P} \left\{ \left| \epsilon^{d-1} \int_{ [-M,M] \times [-\epsilon^{-1} \varrho, \epsilon^{-1} \varrho]^{d-1} } \{ \tilde{\theta}(\delta(\epsilon)^{-1}\epsilon y) - \mathbb{E}[\tilde{\theta}(0)] \} W(q_*(y_{1})) \, dy \right| > \mu \right\} \\
			&\qquad \qquad \leq \exp \left( - C(\mu,M,\varrho) \frac{ \epsilon }{ \delta(\epsilon)^d } \right) .
		\end{align*}
		Notice that the right-hand side vanishes as $ \epsilon \downarrow 0 $ since $ \epsilon \delta(\epsilon)^{-d} = \epsilon^{ 1 - d } ( \epsilon^{-1} \delta(\epsilon) )^{-d} \rightarrow \infty $.  In particular, for any $\mu > 0$, the probability
		\begin{align*}
			\mathbb{P} \left\{ \left| \epsilon^{d-1} \int_{\epsilon^{-1}(U_{\epsilon} - T_{\epsilon} e_1 )} \tilde{\theta}(\delta(\epsilon)^{-1} ( \epsilon y + T_{\epsilon} e_{1} ) )  W(q_*(y_{1})) \, dy - (2\varrho)^{d-1} \int_{-M}^{M} W(q_{*}) \, dt \right| > \mu \right\}
		\end{align*}
	converges to zero as $\epsilon \downarrow 0$.
	
	\textit{Step 3(b): Periodic Medium.} Since in this case $\tilde{\theta}$ is periodic and $\int_{\mathbb{T}^{d}} \tilde{\theta}(y) \, dy = 1$, one readily deduces by trigonometric approximation that
	\begin{align*}
		&\lim_{\epsilon \downarrow 0} \epsilon^{d-1} \int_{[-M,M] \times [-\epsilon^{-1}\varrho,\epsilon^{-1}\varrho]^{d-1}} \left[ \tilde{\theta}(\delta(\epsilon)^{-1}(\epsilon y + T_{\epsilon} e_{1} )) - 1 \right] W(q_{*}(y_{1})) \, dy = 0.
	\end{align*}
Indeed, if we replace $\tilde{\theta} - 1$ by the plane wave $\exp ( \iota 2 \pi k \cdot y )$ for some nonzero $k = (k_{1},k_{2},\dots,k_{d}) \in \mathbb{Z}^{d} \setminus \{0\}$, then we can invoke the Riemann-Lebesgue Lemma to find
	\begin{align*}
		&\lim_{\epsilon \downarrow 0} \epsilon^{d-1} \int_{[-\epsilon^{-1}\varrho,\epsilon^{-1}\varrho]^{d-1}} e^{\iota 2 \pi \delta(\epsilon)^{-1} \epsilon (k_{2},\dots,k_{d}) \cdot y'} \, dy'  \int_{-M}^{M} e^{ \iota 2 \pi k_{1} \delta(\epsilon)^{-1} ( \epsilon y_{1} + T_{\epsilon} ) } W(q_{*}(y_{1})) \, dy \\
		&\quad = \mathbf{1}_{\{0\}} (k_{2},\dots,k_{d})  \lim_{ \epsilon \downarrow 0 } e^{ \iota 2 \pi k_{1} \delta(\epsilon)^{-1} \epsilon T_{\epsilon} } \int_{-M}^{M} e^{ \iota 2 \pi k_{1} \delta(\epsilon)^{-1} \epsilon y_{1}  } W(q_{*}(y_{1})) \, dy_{1} = 0.
	\end{align*}
Due to the normalizing factor $\epsilon^{d-1}$ appearing in front of the integral, this remains true for any mean-zero $\mathbb{Z}^{d}$-periodic function, hence, in particular, to $\tilde{\theta} - 1$.\end{proof}
	
\section{Random Media with Long-Range Correlations} \label{S: long range correlations}

This section describes examples of random media in which the rare events regime occurs above an algebraic scale $\delta \approx \epsilon^{\gamma}$, that is, we prove Theorem \ref{T: algebraic correlation counterexample}.  For simplicity, we take $a \equiv \text{Id}$ and focus on $\theta$.  As in the introduction, $\theta$ will have the form
	\begin{equation*}
		\theta(y) = \theta^{\text{stripe}}(y_{1}) \tilde{\theta}(y),
	\end{equation*}
where $\tilde{\theta}$ is a $d$-dimensional random checkerboard with mean $\mathbb{E}[\tilde{\theta}] = 1$ and $\theta^{\text{stripe}}$, which is taken to be statistically independent of $\tilde{\theta}$, has the properties stated next:

	\begin{prop} \label{P: random stripes} For any $\gamma > 1$, there is a probability space $(\Omega,\mathcal{F},\mathbb{P})$ supporting a nonconstant stationary random field $\theta^{\text{stripe}} : \mathbb{R} \to \{1,2\}$ such that if $\epsilon \mapsto \delta(\epsilon)$ is any deterministic scaling such that
		\begin{equation} \label{E: liminf condition}
			\liminf_{\epsilon \downarrow 0} \delta(\epsilon)\epsilon^{-\gamma} > 0,
		\end{equation}
	then, for any (deterministic) $M \geq 1$ and $\varrho > 0$, with probability one, for all sufficiently small $\epsilon > 0$, there is a $T_{\epsilon} \in [- \varrho, \varrho]$ such that 
		\begin{equation} \label{E: long correlations need this}
			\theta^{\text{stripe}}(\delta(\epsilon)^{-1}s) = 1 \quad \text{for each} \quad s \in [T_{\epsilon} - M \epsilon, T_{\epsilon} + M\epsilon] \subseteq [- \varrho,\varrho].
		\end{equation}
	\end{prop}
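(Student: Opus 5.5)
The plan is to take $\theta^{\text{stripe}}$ to be a stationary alternating renewal process. Its runs of the value $1$ will have i.i.d.\ lengths with a heavy, but integrable, power-law tail. Heavy enough that a window of size $N$ contains a run of $1$'s of length much greater than $N^{1-1/\gamma}$; integrable so that stationarity is available.

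\emph{Reduction.} In the variable $y=\delta^{-1}s$, the window $[-\varrho,\varrho]$ becomes $[-N,N]$ with $N=\varrho/\delta(\epsilon)$. The condition \eqref{E: long correlations need this} asks for a run of $1$'s of length $2M\epsilon/\delta(\epsilon)$ inside $[-N,N]$. By \eqref{E: liminf condition} we have $\delta\ge c\,\epsilon^{\gamma}$ for small $\epsilon$, so
\begin{align*}
\frac{M\epsilon}{\delta}=\frac{M}{\varrho}\,\epsilon N\le C(M,\varrho,c)\,N^{1-1/\gamma}.
\end{align*}
It therefore suffices to show: for every $K\ge1$, almost surely, for all sufficiently large $N$, the interval $[0,N/2]$ contains a run of $1$'s of length at least $K N^{1-1/\gamma}$. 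By monotonicity, enlarging $N$ by a factor $2$ only changes $K$ by a constant factor. So it is enough to check this along dyadic $N=2^{j}$, and then take a countable intersection over $K\in\mathbb N$.

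\emph{Construction.} Fix $\beta$ with $1<\beta<\gamma/(\gamma-1)$; this is possible since $\gamma>1$. Let $L_1,L_2,\dots$ be i.i.d.\ with $\mathbb P\{L>t\}=t^{-\beta}$ for $t\ge1$, so $\mathbb E L<\infty$. The cycles consist of a run of $1$'s of length $L_i$ followed by a run of $2$'s of length $1$. Since the cycle length $L_i+1$ has finite mean, the standard delayed (equilibrium) renewal construction gives a stationary process on $\mathbb R$, which is clearly nonconstant. The first renewal point $\tau_0\ge0$ is a.s.\ finite, and the cycles after $\tau_0$ are the i.i.d.\ cycles above. By the strong law, a.s.\ for large $N$,
\begin{align*}
\tau_0+\sum_{i\le n}(L_i+1)\le N/2 \quad \text{with} \quad n=\lfloor N/(4(\mathbb E L+1))\rfloor .
\end{align*}
Hence the runs $L_1,\dots,L_n$ all lie in $[0,N/2]$.

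\emph{Probability estimate and Borel--Cantelli.} Independence gives
\begin{align*}
\mathbb P\{\max_{i\le n}L_i< KN^{1-1/\gamma}\}=\bigl(1-K^{-\beta}N^{-\beta(1-1/\gamma)}\bigr)^{n}\le \exp\bigl(-c_K N^{1-\beta(1-1/\gamma)}\bigr).
\end{align*}
The exponent $1-\beta(1-1/\gamma)$ is positive by the choice of $\beta$. The bound is therefore summable over $N=2^{j}$, and Borel--Cantelli yields the dyadic statement. Combined with the strong-law event and the reduction, this gives \eqref{E: long correlations need this}. The midpoint $T_\epsilon$ of the run, scaled by $\delta$, lies in $[-\varrho,\varrho]$ with the whole interval $[T_\epsilon-M\epsilon,T_\epsilon+M\epsilon]$ inside.

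The main obstacle is bookkeeping rather than substance. The stationary renewal process must be set up carefully so that the i.i.d.\ structure is usable on $[0,N/2]$. The continuum of scales $\epsilon$ must be passed to dyadic $N$ uniformly in $\epsilon$, using that $\delta(\epsilon)$ is deterministic. The balance $\beta<\gamma/(\gamma-1)$ is the one genuinely quantitative point.
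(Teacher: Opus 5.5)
Your proof is correct. It rests on the same quantitative mechanism as the paper's, but uses a different random field.

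The paper takes $\theta^{\text{stripe}}$ to be a factor of i.i.d.\ ``lamp radii'' $X_m$ with $\mathbb{P}\{X_m\ge k\}\asymp k^{-(1+\alpha)}$; site $k$ gets the value $1$ if some lamp reaches it. It then sets $\beta=(\gamma-1)^{-1}$ and $0<\alpha<\beta$, and shows that among the $\sim N^{1+\beta}$ lamps in a window, one has radius at least $\ell N$ except with probability $C\exp(-C^{-1}N^{\beta-\alpha})$. Borel--Cantelli and rescaling with $N=\lceil \epsilon/\delta\rceil$ finish the argument. Your admissible range $1<\beta<\gamma/(\gamma-1)$ for the Pareto exponent is exactly the paper's range $1<1+\alpha<1+(\gamma-1)^{-1}$ for the lamp tail. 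Your macroscale parametrization (runs of length $N^{1-1/\gamma}$ in a window of size $N=\varrho/\delta$) is equivalent to the paper's mesoscale one, since $(N^{1-1/\gamma})^{\gamma/(\gamma-1)}=N$.

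The two constructions differ in where the condition that the exponent exceed $1$ is spent, and in what comes for free. In the lamp model, stationarity is immediate because the field is an i.i.d.\ factor. A lamp at $j$ illuminates a deterministic interval around $j$, so no control of positions is needed. The work goes instead into nondegeneracy, $\mathbb{P}\{\Theta^{\text{stripe}}_0=2\}>0$, via the infinite product. In your renewal model, nondegeneracy is trivial. The exponent condition buys a finite-mean cycle and hence the two-sided equilibrium construction. You then need the strong law to place $\sim N$ complete cycles inside $[0,N/2]$. In exchange, your field is $\mathbb{R}$-stationary rather than only $\mathbb{Z}$-stationary.

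One point you should make explicit: you use $\delta(\epsilon)\to 0$, so that $N=\varrho/\delta(\epsilon)\to\infty$. This does not follow from the $\liminf$ hypothesis, but it is genuinely needed: for $\delta\equiv 1$ and small $\varrho$, the conclusion fails with positive probability for either construction. The paper's proof likewise uses $\epsilon/\delta(\epsilon)\to\infty$, which holds in the application because $\epsilon^{-1}\delta(\epsilon)\to 0$.
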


Next, we use the previous proposition together with the main result of the previous section (Theorem \ref{T: general rare events}) to prove Theorem \ref{T: algebraic correlation counterexample}.

\begin{proof}[Proof of Theorem \ref{T: algebraic correlation counterexample}] Let $\gamma > 1$ and choose a scaling $\epsilon \mapsto \delta(\epsilon)$ such that $\epsilon^{-1} \delta(\epsilon) \to 0$ as $\epsilon \downarrow 0$ and \eqref{E: liminf condition} holds.  Let $a \equiv \text{Id}$ and let $\theta(y) = \theta^{\text{stripe}}(y_{1}) \tilde{\theta}(y)$ for $y = (y_{1},y_{2},\dots,y_{d}) \in \mathbb{R}^{d}$, where $\theta^{\text{stripe}}$ and $\tilde{\theta}$ are independent random fields; $\theta^{\text{stripe}} : \mathbb{R} \to \{1,2\}$ is the stationary random field described in Proposition \ref{P: random stripes}; and, as in case (i) of  Theorem \ref{T: general rare events}, $\tilde{\theta}$ is a positive random checkerboard with mean $\mathbb{E} [ \tilde{\theta}(x) ] = 1$.

In what follows, denote by $\mathbb{P}(A \, \mid \, \theta^{\text{stripe}} )$ the conditional probability of an event $A$ given $\theta^{\text{stripe}}$.

Fix $\nu > 0$.  Let $\Omega_{0}$ denote the event that, for any $M \geq 1$ and any $\rho > 0$, there is an $\epsilon_{*}(M,\rho) > 0$ such that, for each $\epsilon < \epsilon_{*}(M,\rho)$, there is a $T_{\epsilon}$ such that \eqref{E: long correlations need this} holds.
Since conditioning on $\theta^{\text{stripe}}$ does not change the law of $\tilde{\theta}$ (due to independence), we can invoke Theorem \ref{T: general rare events} to find
\begin{align*}
\lim_{ \epsilon \downarrow 0 } \mathbb{P} \{ \sigma_{\epsilon,\delta(\epsilon)}(e_{1}; [-\varrho,\varrho]^{d} )  \leq \sigma_{W} ( 2 \rho )^{d-1} + \nu \, \mid \, \theta^{\text{stripe}} \} = 1 \quad \text{almost surely on} \, \, \Omega_{0}.
\end{align*}
Since $\mathbb{P}(\Omega_{0}) = 1$ and $\mathbb{E}[ \mathbb{P}(A \, \mid \, \theta^{\text{stripe}} ) ] = \mathbb{P}(A)$ for any event $A$, we can invoke Lebesgue's dominated convergence theorem to conclude that
\begin{align*}
\lim_{ \epsilon \downarrow 0 } \mathbb{P} \{ \sigma_{\epsilon,\delta(\epsilon)}(e_{1}; [-\varrho,\varrho]^{d} ) \leq \sigma_{W} ( 2 \rho )^{d-1} + \nu \} = 1 .
\end{align*}
This completes the proof of \eqref{E: algebraic correlations lim inf} since, by construction, $\theta^{\text{stripe}}_{*} = 1$ and 
	\begin{align*}
		\theta^{\text{stripe}}_{*} < \mathbb{E} [ \theta^{\text{stripe}}(0) ] = \mathbb{E} [ \theta(0) ] = \bar{\theta}
	\end{align*}
by independence, $\mathbb{E}[\tilde{\theta}(0)] = 1$, and the fact that $\theta^{\text{stripe}}$ is nonconstant.
\end{proof}

The remainder of this section is devoted to the proof of Proposition \ref{P: random stripes}.

\subsection{Idea of the Construction} The construction of the field $\theta^{\text{stripe}}$ of Proposition \ref{P: random stripes} is based on what amounts to a long-range site percolation model.  Suppose a lamp sits at each point of $\mathbb{Z}$.  The brightness of the lamp is determined by an integer-valued random variable $X_{m}$.  For each $k \in \mathbb{Z}$, if $|m - k| \leq X_{m}$, then the lamp at $m$ illuminates $k$.  (If $X_{m} < 0$, then the lamp at $m$ is off.)  In this way, the set of all illuminated sites is a random subset of $\mathbb{Z}$ determined by $\{X_{m}\}_{m \in \mathbb{Z}}$.  

We define $\theta^{\text{stripe}}$ as a random step function
	\begin{equation} \label{E: random step correlations}
		\theta^{\text{stripe}}(y) = \sum_{k \in \mathbb{Z}} \Theta^{\text{stripe}}_{k} \mathbf{1}_{[k,k+1)}(y),
	\end{equation}
where $\Theta^{\text{stripe}}_{k} = 1$ if $k$ is illuminated and $\Theta^{\text{stripe}}_{k} = 2$, otherwise.  More precisely,
	\begin{equation} \label{E: markers}
		\Theta^{\text{stripe}}_{k} = 1 \quad \text{if} \quad \sum_{m \in \mathbb{Z}} \mathbf{1}_{\{|m - k| \leq X_{m}\}} \geq 1 , \quad \Theta^{\text{stripe}}_{k} = 2 \quad \text{otherwise.}	
	\end{equation}

In the remainder of the construction, the law of $\{X_{m}\}_{m \in \mathbb{Z}}$ is chosen so that $\theta^{\text{stripe}}$ has the desired properties laid out in Proposition \ref{P: random stripes}.
	
\subsection{Construction of $\theta^{\text{stripe}}$} \label{S: algebraic correlation construction} To specify the law of the random variables $\{X_{m}\}_{m \in \mathbb{Z}}$, fix an $\alpha > 0$ and define a probability measure $\mu_{\alpha}$ on $\mathbb{Z}$ by setting
	\begin{equation*}
		\mu_{\alpha}(k) = Z_{\alpha}^{-1} |k|^{-(2 + \alpha)} \quad \text{for each} \quad k \in \mathbb{Z} \setminus \{0\}, \quad \mu_{\alpha}(0) = 0.
	\end{equation*}
The constant $Z_{\alpha} > 0$ is fixed so that $\sum_{k \in \mathbb{Z}} \mu_{\alpha}(k) = 1$.

Let $(\Omega,\mathcal{F},\mathbb{P}_{\alpha})$ be a probability space supporting i.i.d.\ random variables $\{X_{m}\}_{m \in \mathbb{Z}}$, each with law $\mu_{\alpha}$.  Define $\{\Theta^{\text{stripe}}_{k}\}_{k \in \mathbb{Z}}$ by \eqref{E: markers} as before.  

	The choice of the decay $\mu_{ \alpha } (k) \sim k^{-(2 + \alpha)}$ with exponent $2 + \alpha > 2$ is explained by the next result.

	\begin{prop} $\{\Theta^{\text{stripe}}_{k}\}_{k \in \mathbb{Z}}$ is a stationary sequence with a nontrivial law.  In particular, for any choice of $\alpha > 0$, we have $0 < \mathbb{P}_{\alpha} \{\Theta^{\text{stripe}}_{0} = 2\} < 1$. \end{prop}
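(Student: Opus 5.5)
Stationarity comes first: the sequence $\{\Theta^{\text{stripe}}_{k}\}$ is a fixed measurable function of the i.i.d.\ family $\{X_{m}\}_{m \in \mathbb{Z}}$ that commutes with shifts. If $\tau$ denotes the shift $(\tau X)_{m} = X_{m+1}$, then \eqref{E: markers} gives $\Theta^{\text{stripe}}_{k}(\tau X) = \Theta^{\text{stripe}}_{k+1}(X)$, because reindexing $m \mapsto m+1$ in the sum leaves the condition $|m-k| \leq X_{m}$ invariant. The law of $\{X_{m}\}$ is shift invariant, so the law of $\{\Theta^{\text{stripe}}_{k}\}$ is as well. Before this, we check that the sum in \eqref{E: markers} is almost surely finite, so that $\Theta^{\text{stripe}}_{k}$ is well defined; it is enough to know that the indicator is $\geq 1$ or not, which is always defined, but finiteness is what the nontriviality step below uses.

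For $0 < \mathbb{P}_{\alpha}\{\Theta^{\text{stripe}}_{0} = 2\} < 1$, the upper bound is immediate. The lamp at $m=0$ alone gives
\begin{align*}
\mathbb{P}_{\alpha}\{\Theta^{\text{stripe}}_{0} = 1\} \geq \mathbb{P}_{\alpha}\{X_{0} \geq 0\} = \sum_{k \geq 1} \mu_{\alpha}(k) = \tfrac12 > 0.
\end{align*}
For the lower bound, independence gives
\begin{align*}
\mathbb{P}_{\alpha}\{\Theta^{\text{stripe}}_{0} = 2\} = \prod_{m \in \mathbb{Z}} \mathbb{P}_{\alpha}\{X_{m} < |m|\} = \prod_{m \in \mathbb{Z}} \bigl(1 - p_{|m|}\bigr), \qquad p_{j} := \mu_{\alpha}([j,\infty)).
\end{align*}
An infinite product of factors in $[0,1]$ is positive if and only if no factor vanishes and $\sum_{m} p_{|m|} < \infty$. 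This is where the exponent $2+\alpha$ enters: $p_{j} \asymp j^{-(1+\alpha)}$ is summable precisely because $\alpha > 0$. Each factor is positive. For $m \neq 0$ we have $p_{|m|} \leq \tfrac12 < 1$, and the factor for $m=0$ is $1 - p_{0} = \mathbb{P}\{X_{0}<0\} = \tfrac12$.

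The only step needing care is this summability and the infinite product. I would make it precise with the comparison $\sum_{k \geq j} k^{-(2+\alpha)} \leq C_{\alpha} j^{-(1+\alpha)}$ together with $\log(1-p) \geq -2p$ for $p \leq \tfrac12$. By Borel--Cantelli, the same summability also shows that almost surely only finitely many lamps illuminate $0$. Altogether this gives a strictly positive probability strictly less than one, i.e.\ a nontrivial law.
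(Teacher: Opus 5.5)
Your proof is correct and follows essentially the same route as the paper. The paper also bounds $\mathbb{P}_{\alpha}\{\Theta^{\text{stripe}}_{0}=2\}\le\mathbb{P}_{\alpha}\{X_{0}<0\}=\tfrac12$, factors the event over the independent events $\{X_{m}<|m|\}$, and gets positivity of the product from the tail comparison $\sum_{j\ge m}j^{-(2+\alpha)}\lesssim m^{-(1+\alpha)}$. You additionally spell out stationarity via $\Theta^{\text{stripe}}_{k}(\tau X)=\Theta^{\text{stripe}}_{k+1}(X)$, which the paper leaves implicit; your aside about almost sure finiteness of the sum is not actually needed, since only the summability of $p_{|m|}$ enters.
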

	
	The proof below also shows that if the exponent $2 + \alpha$ is replaced by $1 + \alpha$ with $\alpha \leq 1$, then $\mathbb{P}_{\alpha}\{\Theta^{\text{stripe}}_{0} = 1\} = 1$ and then $\Theta^{\text{stripe}}$ is simply a constant sequence.  Thus, we restrict to exponents above two.
	
		\begin{proof} It is straightforward to check that $\mathbb{P}_{\alpha}\{\Theta_{0}^{\text{stripe}} = 2\} \leq \mathbb{P}_{\alpha}\{X_{0} < 0\} = \frac{1}{2}$.  Thus, it only remains to check that the probability is positive. 
		
		Toward that end, observe that 
			\begin{align*}
				\{\Theta^{\text{stripe}}_{0} = 2\} = \bigcap_{m \in \mathbb{Z}} \{X_{m} < |m|\}.
			\end{align*}
		Thus, since $\{X_{m}\}_{m \in \mathbb{Z}}$ is i.i.d.,
			\begin{align*}
				\mathbb{P}_{\alpha}\{\Theta^{\text{stripe}}_{0} = 2\}^{\frac{1}{2}}
				&= \mathbb{P}_{\alpha}\{X_{0} < 0\}^{\frac{1}{2}} \prod_{m = 1}^{\infty} \mathbb{P}_{\alpha}\{X_{m} < m\} \\
				&= \frac{1}{\sqrt{2}} \prod_{m = 1}^{\infty} \left(1 - Z_{\alpha}^{-1} \sum_{j = m}^{\infty} j^{-(2 + \alpha)} \right)
			\end{align*}
		It only remains to prove that 
			\begin{align*}
				\prod_{m = 1}^{\infty} \left(1 - Z_{\alpha}^{-1} \sum_{j = m}^{\infty} j^{-(2 + \alpha)} \right) > 0.
			\end{align*}
		To this end, recall that there is a constant $c_{\alpha} > 1$ such that
			\begin{align} \label{E: useful integral comparison algebraic}
				c_{\alpha}^{-1} m^{-(1 + \alpha)} \leq \sum_{j = m}^{\infty} j^{-(2 + \alpha)} \leq c_{\alpha} m^{-(1 + \alpha)} \quad \text{for each} \quad m \geq 1.
			\end{align}
		Therefore, upon fixing $m_{*} \in \mathbb{N}$ with $Z_{\alpha}^{-1} c_{\alpha} m_{*}^{-(1 + \alpha)} < 1$, we find
			\begin{align*}
			\prod_{m = m_{*}}^{\infty} \left(1 - Z_{\alpha}^{-1} \sum_{j = m}^{\infty} j^{-(2 + \alpha)} \right) \geq \prod_{m = m_{*}}^{\infty} \left( 1 - Z_{\alpha}^{-1} c_{\alpha} m^{-(1 + \alpha)} \right).
			\end{align*}
		In view of the fact that $\sum_{m = 1}^{\infty} m^{-(1 + \alpha)} < \infty$, we know that 
			\begin{align*}
				\sum_{m = m_{*}}^{\infty} \log \left( 1 - Z_{\alpha}^{-1} c_{\alpha} m^{-(1 + \alpha)} \right) > - \infty ,
			\end{align*}
		and, thus, $\prod_{m = m_{*}}^{\infty} \left( 1 - Z_{\alpha}^{-1} c_{\alpha} m^{-(1 + \alpha)} \right) > 0$.
		\end{proof}

Because of the long tails of the random variables $\{X_{m}\}_{m \in \mathbb{Z}}$, there are long, random intervals on which the field $\theta^{\text{stripe}}$ is identically equal to one.  The next proposition quantifies this.

	\begin{prop} \label{P: algebraic scale selection} Fix $0 < \alpha < \beta$, $c > 0$, and $\ell \in \mathbb{N}$. For any $N \in \mathbb{N}$, let $\tilde{E}_{N}$ be the event
		\begin{equation*}
			\tilde{E}_{N} = \bigcup_{j = \lfloor -cN^{1 + \beta} \rfloor}^{\lceil cN^{1 + \beta} \rceil} \{\theta^{\text{stripe}} \equiv 1 \, \, \text{in} \, \, [j - \ell N, j + \ell N]\}
		\end{equation*}
	and let $\tilde{E} = \liminf_{N \to \infty} \tilde{E}_{N}$.  There is a constant $C = C(\alpha,\beta,c) \geq 1$ such that
		\begin{equation} \label{E: what we want long range}
			\mathbb{P}_{\alpha}(\tilde{E}_{N}) \geq 1 - C \exp \left( - C^{-1} N^{\beta - \alpha} \right).
		\end{equation}
	In particular, $\mathbb{P}_{\alpha}(\tilde{E}) = 1$. \end{prop}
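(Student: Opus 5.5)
The idea is that a single lamp $m$ with $X_m \geq 2\ell N$, sitting inside the window $[-cN^{1+\beta}, cN^{1+\beta}]$, already illuminates the whole interval $[m - \ell N, m + \ell N]$. In particular it forces $\theta^{\text{stripe}} \equiv 1$ on $[j - \ell N, j + \ell N]$ for $j = m$. This is slightly off at the right endpoint, since $\theta^{\text{stripe}}$ is the step function on $[k,k+1)$. To cover it, I would require $X_m \geq \ell N + 1$ and take $j = m$. The index range for $j$ contains every integer $m$ with $|m| \leq cN^{1+\beta}$. So $\tilde{E}_N$ contains the event that some $m$ with $|m| \leq cN^{1+\beta}$ satisfies $X_m \geq \ell N + 1$. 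Its complement is contained in
\begin{equation*}
\bigcap_{|m| \leq cN^{1+\beta}} \{ X_m \leq \ell N \}.
\end{equation*}

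Next I would use independence of the $X_m$ together with the tail bound \eqref{E: useful integral comparison algebraic}. For $k \geq 1$,
\begin{equation*}
\mathbb{P}_\alpha\{X_0 \geq k\} = Z_\alpha^{-1}\sum_{j \geq k} j^{-(2+\alpha)} \geq (Z_\alpha c_\alpha)^{-1} k^{-(1+\alpha)}.
\end{equation*}
Applying this with $k = \ell N + 1 \leq 2\ell N$ and using $1 - x \leq e^{-x}$, the probability of the intersection is at most
\begin{equation*}
\Bigl(1 - (Z_\alpha c_\alpha)^{-1} (2\ell N)^{-(1+\alpha)}\Bigr)^{2\lfloor cN^{1+\beta}\rfloor + 1} \leq \exp\Bigl(- c' N^{1+\beta} N^{-(1+\alpha)}\Bigr) = \exp(-c' N^{\beta-\alpha}),
\end{equation*}
where $c' > 0$ depends on $\alpha$, $c$, $\ell$. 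This gives \eqref{E: what we want long range}, with $C$ chosen large enough to absorb small $N$, for which the bound is trivial once $C \geq 1$. The constant also depends on $\ell$; the statement suppresses this, and it is harmless because $\ell$ is fixed.

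Finally, $\mathbb{P}_\alpha(\tilde{E}) = 1$ follows from Borel--Cantelli. Since $\beta > \alpha$, the series $\sum_N \exp(-C^{-1}N^{\beta-\alpha})$ converges, so almost surely only finitely many complements $\tilde{E}_N^c$ occur. That is exactly the statement $\omega \in \liminf_N \tilde{E}_N$.

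There is no serious obstacle here. The only care needed is the endpoint and discretization bookkeeping: matching the half-open cells $[k,k+1)$ with the closed interval $[j-\ell N, j+\ell N]$, and checking that the integer range of $j$ contains the lamp positions used. Both are handled by the margin of $1$ in the threshold for $X_m$ and by counting at least $cN^{1+\beta}$ admissible $m$.
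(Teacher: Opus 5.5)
Your proof is correct and is essentially the paper's argument: a single lamp $m$ with $|m| \le cN^{1+\beta}$ and $X_m$ above a threshold of order $\ell N$ forces $\tilde{E}_N$; independence, the tail bound \eqref{E: useful integral comparison algebraic} and $1 - x \le e^{-x}$ then give the stretched-exponential estimate; and Borel--Cantelli gives $\mathbb{P}_\alpha(\tilde{E}) = 1$. The extra margin $X_m \geq \ell N + 1$ is harmless but not needed. Already $X_m \geq \ell N$, the threshold the paper uses, illuminates the cells $k = m - \ell N, \dots, m + \ell N$, and their union $[m - \ell N, m + \ell N + 1)$ contains the closed interval $[m - \ell N, m + \ell N]$.
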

	
		\begin{proof} By independence and the same reasoning that led to \eqref{E: useful integral comparison algebraic}, there is a constant $C_{\alpha} > 0$ such that 
			\begin{align*}
				\mathbb{P}_{\alpha}(E_{N}) &\geq \mathbb{P}_{\alpha} \left( \bigcup_{ j = \lfloor - c N^{ 1 + \beta } \rfloor }^{ \lceil c N^{ 1 + \beta } \rceil } \{ X_{j} \geq \ell N \} \right) \\
					&= 1 - \prod_{ j = \lfloor - c N^{ 1 + \beta } \rfloor }^{ \lceil c N^{ 1 + \beta } \rceil } \mathbb{P}_{\alpha} \{ X_{j} < \ell N \} \geq 1 - \left( 1 -  C_{\alpha} \ell^{- ( 1 + \alpha ) } N^{- ( 1 + \alpha ) } \right)^{1 + 2 \lceil c N^{ 1 + \beta } \rceil }.
			\end{align*}
		Upon rearranging, this becomes
			\begin{align*}
				\mathbb{P}_{\alpha}( \Omega_{1} \setminus \tilde{E}_{N}) \leq \exp \left( (2 \lceil c N^{1 + \beta} \rceil + 1) \log \left(1 - C_{\alpha} \ell^{- ( 1 + \alpha ) }  N^{- (1 + \alpha) } \right) \right),
			\end{align*}
		which implies \eqref{E: what we want long range}.  Further, by the Borel-Cantelli Lemma, 
			\begin{align*}
		1 - \mathbb{P}_{\alpha} ( \tilde{E} ) = \mathbb{P}_{\alpha} \left(\limsup_{N \to \infty} \Omega_{1} \setminus \tilde{E}_{N} \right) = 0.
			\end{align*}
\end{proof}
	
\subsection{Proof of Proposition \ref{P: random stripes}} It only remains to prove that $\theta^{\text{stripe}}$ has the desired properties.  This follows more-or-less directly from Proposition \ref{P: algebraic scale selection} after rescaling.

To see this, not unlike the discussion in Section \ref{S: overview large deviations}, we rescale the problem, but this time in such a way that the microscale has units of order one.  We then find that the macroscale has units of order $T = \delta^{-1}$ and the mesoscale has units of order $\mu = \epsilon \delta^{-1}$.  In order for the energy functional to see the long intervals on which $\theta^{\text{stripe}}$ is constant, these intervals should have length of order $\mu$.  At the same time, to take full advantage of Proposition \ref{P: algebraic scale selection}, the corresponding macroscale should be at least of order $\mu^{1 + \beta}$.  In terms of $\epsilon$ and $\delta$, the requirement $T \gtrsim \mu^{1 + \beta}$ corresponds to $\delta \gtrsim \epsilon^{\frac{1 + \beta}{\beta}}$.

The function $\beta \mapsto \frac{1 + \beta}{\beta}$ maps the interval $(0,\infty)$ onto $(1,\infty)$.  Therefore, these examples cover any rate of algebraic decay faster than $\epsilon$.

	\begin{proof}[Proof of Proposition \ref{P: random stripes}] Fix $\gamma > 1$, $\varrho > 0$, and $M \geq 1$, and fix a scaling $\epsilon \mapsto \delta(\epsilon)$ such that 
		\begin{equation} \label{E: algebraic scaling}
			K\coloneqq \liminf_{\epsilon \downarrow 0} \epsilon^{-\gamma} \delta(\epsilon) > 0.
		\end{equation}
	Fix an arbitrary $0 < \alpha < (\gamma - 1)^{-1}$ and let $(\Omega,\mathcal{F},\mathbb{P}) \coloneqq (\Omega,\mathcal{F},\mathbb{P}_{\alpha})$ be the probability space built in Section \ref{S: algebraic correlation construction} with this choice of $\alpha$.  Define $\beta = (\gamma - 1)^{-1}$.
	
	For any scale $N$, define the random point $J_{N}$ by
		\begin{align*}
			J_{N} &= \inf \left\{ j \in \mathbb{Z} \, \mid \, - \varrho K^{\frac{\beta}{1 + \beta}} N^{1 + \beta}/4 \leq j \leq \varrho K^{\frac{\beta}{1 + \beta}} N^{1 + \beta}/4 \, \, \text{and} \, \,  \right. \\
				&\qquad \qquad \qquad \qquad \left. \theta^{\text{stripe}}(t) = 1 \, \, \text{for each} \, \, t \in [j - \lceil M \rceil N, j + \lceil M \rceil N] \right\},
		\end{align*}
	where we set $J_{N} = -\infty$ if the set inside the infimum is empty.  Note that the random variables $(J_{N})_{N \in \mathbb{N}}$ are $\mathcal{F}_{1}$-measurable.  By Proposition \ref{P: algebraic scale selection}, with probability one, there is a random $N_{*} < \infty$ such that $J_{N} > -\infty$ for each $N \geq N_{*}$.
	
	It only remains to rescale.  Toward that end, in view of the previous discussion, it is convenient to let $N(\epsilon) = \lceil \frac{\epsilon}{\delta(\epsilon)} \rceil$.  By construction, if we define a random variable $T_{\epsilon}$ by $T_{\epsilon} = \delta(\epsilon) J_{N(\epsilon)}$, then
		\begin{equation*}
			\theta^{\text{stripe}}(\delta(\epsilon)^{-1} s) = 1 \quad \text{for each} \quad s \in [T_{\epsilon} - M \epsilon, T_{\epsilon} + M \epsilon].
		\end{equation*}
	provided $\epsilon$ is so small that $N(\epsilon) \geq N_{*}$.  Note, in addition, that if $\epsilon$ is small enough, then (keeping also in mind that $ \beta = ( \gamma - 1 )^{-1} $)
		\begin{equation*}
			\varrho K^{\frac{\beta}{1 + \beta}} N(\epsilon)^{1 + \beta}/4 + \lceil M \rceil N(\epsilon) \leq \varrho K^{\frac{\beta}{1 + \beta}} N(\epsilon)^{1 + \beta} / 2 \leq \varrho \delta^{-1}.
		\end{equation*}
	In particular, for such $\epsilon$, the inclusion $[T_{\epsilon} - M \epsilon, T_{\epsilon} + M\epsilon] \subseteq [-\varrho,\varrho]$ also holds.
		\end{proof}


\section{Quasi-Periodic Media with Long Excursions from the Mean} \label{S: quasiperiodic examples}

In this section, we present an example demonstrating that rare events (or rather atypical configurations) are also relevant in almost periodic media.  The construction is based on Proposition \ref{P: quasiperiodic example done} below, which asserts the existence of almost periodic functions with long excursions from the mean.  More precisely, using Liouville numbers, we give a geometric proof of the existence of an almost periodic function $\theta^{\text{stripe}} : \mathbb{R} \to \{1,2\}$ for which the functional $\mathscr{F}_{\epsilon,\delta}$ has a nontrivial rare events regime, as in Theorem \ref{T: almost periodic counterexample}.

The main step in the construction is the proof of the following result.

	\begin{prop} \label{P: quasiperiodic construction} There is a function $f : \mathbb{R} \to \{1,2\}$, which is in $B^{p}(\mathbb{R})$ for any $1 \leq p < \infty$, such that, on the one hand, the mean value is greater than one
		\begin{align*}
			\lim_{ R \uparrow \infty } \frac{1}{2R} \int_{-R}^{R} f(t) \, dt > 1,
		\end{align*}
	and yet, on the other hand, $f$ attains its minimum on arbitrarily long intervals: More precisely, for any $M \in \mathbb{R}$, there is a $y \in \mathbb{R}$ such that 
		\begin{align} \label{E: weaker condition}
			f(t) = 1 \quad \text{for each} \, \, t \in [y,y+M].
		\end{align}
	\end{prop}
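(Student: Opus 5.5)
We realize $f$ as the restriction of an indicator function on the two-torus to a linear flow with Liouville slope. Fix a Liouville number $\alpha$, i.e.\ an irrational number admitting rationals $p_{k}/q_{k}$ with $q_{k} \to \infty$ and $|\alpha - p_{k}/q_{k}| \leq q_{k}^{-k}$. Consider the line $\Gamma(t) = (t,\alpha t) \bmod \mathbb{Z}^{2}$ in $\mathbb{T}^{2}$. For each $k$, let $N_{k} \subseteq \mathbb{T}^{2}$ be the open tubular neighborhood of width $w_{k} = 2^{-k-2} q_{k}^{-1}$ around the closed geodesic $\{(t, p_{k} t / q_{k})\}$ of slope $p_{k}/q_{k}$. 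This geodesic has length $\lesssim q_{k}$, so $|N_{k}| \lesssim 2^{-k-2}$. Set $B = \bigcup_{k} N_{k}$; after discarding finitely many $k$ we may assume $|B| \leq \sum_{k}|N_{k}| < 1$. Define
\begin{align*}
f(t) = 2 - \mathbf{1}_{B}(\Gamma(t)).
\end{align*}

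\emph{Long intervals.} Fix $M$ and choose $k$ large. By density of $\Gamma$, pick $y$ such that $\Gamma(y)$ lies within $w_{k}/4$ of the $k$-th closed geodesic. Over times $t \in [y, y+T]$, the transverse distance between $\Gamma(t)$ and the rational geodesic through the corresponding nearby point grows at most like $T|\alpha - p_{k}/q_{k}| \leq T q_{k}^{-k}$. Hence $\Gamma(t) \in N_{k}$ for all $t \in [y,y+T]$ as long as $T \leq w_{k} q_{k}^{k}/4 \sim 2^{-k} q_{k}^{k-1}$. This bound tends to infinity, so for $k$ large it exceeds $M$, and then $f \equiv 1$ on $[y,y+M]$, which is \eqref{E: weaker condition}.

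\emph{Mean value and Besicovitch regularity.} The linear flow with irrational slope is uniquely ergodic. Consequently, for a closed set $C$ the Birkhoff averages $\frac{1}{2R}\int_{-R}^{R}\mathbf{1}_{C}(\Gamma)$ satisfy $\limsup \leq |C|$, and for an open set $O$ they satisfy $\liminf \geq |O|$. Applying both to $O = B$ and $C = \mathbb{T}^{2}\setminus B$, whose indicators sum to one, shows that the averages of $\mathbf{1}_{B}(\Gamma)$ converge to $|B|$. The same holds for each finite union $B_{K} = \bigcup_{k\leq K} N_{k}$. Hence the mean of $f$ is $2 - |B| > 1$. Moreover, since $B \supseteq B_{K}$,
\begin{align*}
\| \mathbf{1}_{B}(\Gamma) - \mathbf{1}_{B_{K}}(\Gamma) \|_{B^{1}} = |B| - |B_{K}| \to 0 \quad \text{as } K \to \infty.
\end{align*}
Each $B_{K}$ is a finite union of tubes, so its boundary is Lebesgue-null. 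Therefore $\mathbf{1}_{B_{K}}$ can be sandwiched between continuous functions $g_{-} \leq \mathbf{1}_{B_{K}} \leq g_{+}$ with $\int (g_{+} - g_{-})$ arbitrarily small. Approximating $g_{\pm}$ uniformly by trigonometric polynomials on $\mathbb{T}^{2}$, their compositions with $\Gamma$ become trigonometric polynomials in $t$ with frequencies in $\mathbb{Z} + \alpha\mathbb{Z}$. Unique ergodicity then controls the $B^{1}$ error by $\int (g_{+} - g_{-})$, so $\mathbf{1}_{B_{K}}(\Gamma) \in B^{1}$, and hence $f \in B^{1}$. Finally, $f - g$ is bounded for any such approximant $g$, and $\|h\|_{B^{p}}^{p} \leq \|h\|_{\infty}^{p-1}\|h\|_{B^{1}}$ for bounded $h$, which upgrades the conclusion to $f \in B^{p}$ for every $p < \infty$.

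\emph{Main obstacle.} The delicate part is the Besicovitch approximation of the indicator of the infinite union $B$. The closure of $B$ may have large measure, so a naive closed-set bound does not control the tail $B \setminus B_{K}$. The open/closed complement trick above circumvents this by giving exact convergence of averages for any open set; I would check it carefully, including uniformity over the symmetric windows $[-R,R]$. The tracking estimate in the long-intervals step is routine once $w_{k}$ is chosen so that $w_{k} q_{k}^{k} \to \infty$ while $\sum_{k} q_{k} w_{k} < 1$.
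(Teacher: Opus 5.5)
There is a genuine gap in the mean-value and Besicovitch step, and in fact the function you construct is identically equal to $1$. Your orbit passes through the origin, $\Gamma(0)=0$, and the origin lies on every closed geodesic $G_k=\{(t,p_kt/q_k)\}$ around which the tubes $N_k$ are built. Exactly as in your long-interval estimate,
\begin{align*}
\mathrm{dist}(\Gamma(t),G_k)\le |\alpha-p_k/q_k|\,|t|\le q_k^{-k}|t|,
\end{align*}
so $\Gamma(t)\in N_k$ whenever $|t|<w_kq_k^{k}/2=2^{-k-3}q_k^{k-1}$. This threshold tends to infinity because $q_k\to\infty$. Every fixed $t$ lies below the threshold for some large (non-discarded) $k$, hence $\Gamma(\mathbb{R})\subseteq B$ and $f\equiv 1$. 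Its mean value is therefore $1$, not $2-|B|$.

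Your \emph{open/closed complement trick} cannot detect this, because it contains no upper bound. Write $A_R=\frac{1}{2R}\int_{-R}^R\mathbf{1}_B(\Gamma(t))\,dt$. Applying the closed-set bound to $C=\mathbb{T}^2\setminus B$ gives $\limsup_R(1-A_R)\le 1-|B|$, i.e.\ $\liminf_R A_R\ge |B|$. That is literally the open-set bound for $B$ again. Unique ergodicity only yields $\limsup_R A_R\le|\overline{B}|$. Here $\overline{B}=\mathbb{T}^2$, since $G_k$ is $O(q_k^{-1})$-dense. Unique ergodicity controls averages along a prescribed orbit only for sets with Lebesgue-null boundary, and here $|\partial B|=1-|B|>0$. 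The same error invalidates your identity $\|\mathbf{1}_B(\Gamma)-\mathbf{1}_{B_K}(\Gamma)\|_{B^1}=|B|-|B_K|$.

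Your long-interval argument is fine, and it works for any base point, since every orbit of an irrational flow is dense. The missing idea is to choose the base point generically rather than at the origin. This is what the paper does. It sets $f_y(s)=F([y+s\eta])$ with $F=2-\chi_E$. It then applies Birkhoff's pointwise ergodic theorem for the ergodic flow $[y]\mapsto[y+s\eta]$ to $F$, and to $|F-F_n|^p$ for a countable family of trigonometric polynomials $F_n\to F$ in every $L^p(\mathbb{T}^2)$. This gives, for Lebesgue-a.e.\ $[y]$, that $f_y$ has mean value $\int_{\mathbb{T}^2}F=2-|E|>1$ and lies in $B^p$ for all $p<\infty$. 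Almost every orbit also enters the inner halves $\tilde E_N$ of the strips, which gives the long intervals. Replacing your fixed orbit by such a generic one, and unique ergodicity by the pointwise ergodic theorem, repairs your proof.
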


In fact, we will construct an $f$ as above together with a microscopic scaling $\epsilon \mapsto \delta(\epsilon)$ such that $\epsilon^{-1} \delta(\epsilon) \to 0$ as $\epsilon \searrow 0$ and the apparently stronger (but, in this setting, almost equivalent) condition holds: For each $M \in \mathbb{N}$ and $\varrho > 0$, there is a $T_{\epsilon} \in [-\varrho,\varrho]$ such that, for all $\epsilon$ small enough,
	\begin{align} \label{E: condition we need later}
		f(\delta(\epsilon)^{-1}s) = 1 \quad \text{for each} \quad s \in [T_{\epsilon} - M \epsilon,T_{\epsilon} + M \epsilon] \subseteq [-\varrho,\varrho].
	\end{align}
	
	\begin{proof}[Proof of Theorem \ref{T: almost periodic counterexample}] First, we invoke Proposition \ref{P: quasiperiodic example done done} below to obtain an almost periodic function $\theta^{\text{stripe}} = f$ taking values in $\{1,2\}$, with mean value greater than $1$, and satisfying \eqref{E: condition we need later}.  Then we let $\tilde{\theta}$ be any positive $\mathbb{Z}^{d}$-periodic function with $\int_{\mathbb{T}^{d}} \tilde{\theta} = 1$, let $\theta(y) = \theta^{\text{stripe}}(y_{1}) \tilde{\theta}(y)$, and conclude using Theorem \ref{T: general rare events} that, for any $\varrho > 0$,
		\begin{align*}
			\limsup_{ \epsilon \downarrow 0 } \varrho^{-(d-1)} \sigma_{\epsilon,\delta(\epsilon)} ( e_{1} ; [-\varrho/2,\varrho/2]^{d}) \leq \sigma_{W} .
		\end{align*}
	Since $\bar{\sigma}(e_{1}) = \sigma_{W} \sqrt{\bar{\theta}}$ with $\bar{\theta}$ the mean value of $\theta$ (see \eqref{E: mean value} above), to conclude that \eqref{E: almost periodic example equation} holds, it only remains to prove that $\bar{\theta} > 1$.  Here we argue by contradiction: If $\bar{\theta} = 1$, then, by the definition of mean value and the assumption that $\int_{\mathbb{T}^{d}} \tilde{\theta} = 1$,
		\begin{align*}
			0 = \bar{\theta} - 1 = \lim_{ R \uparrow \infty} \frac{ 1 }{ ( 2 R )^{ d } } \int_{ [ -R, R ]^{d} } ( \theta^{\text{stripe}}(y_{1}) - 1 ) \tilde{\theta}(y) \, dy.
		\end{align*}
	On the other hand, since $\theta^{\text{stripe}}$ takes values in $\{1,2\}$ by construction and $\tilde{\theta}$ is positive, this implies 
		\begin{align*}
			\lim_{ R \uparrow \infty } \frac{ 1 }{ 2 R } | \{ y_{1} \in [-R,R] \, \mid \, \theta^{\text{stripe}} (y_{1}) = 2 \} | = 0,
		\end{align*}
	and, thus, 
		\begin{align*}
			\lim_{ R \uparrow \infty } \frac{ 1 }{ 2 R } \int_{ -R }^{ R } \theta^{\text{stripe}} \, dy_{1} = 1.
		\end{align*}
	This contradicts the fact that the mean value of $\theta^{\text{stripe}}$ is greater than $1$. \end{proof}

\subsection{Idea of the Construction} \label{S: idea of construction} The idea of the construction of $f$  is based on the geometry of lines projected down to the torus $\mathbb{T}^{2}$. In what follows, denote by $[y] \in \mathbb{T}^{2}$ the equivalence class of $y \in \mathbb{R}^{2}$ modulo the action of $\mathbb{Z}^{2}$. Given an arbitrary unit vector $\eta \in S^{1}$, if, for a given $y \in \mathbb{R}^{2}$, we project the line $t \mapsto y + t \eta$ into $\mathbb{T}^{2}$, there are two possibilities:
	\begin{itemize}
		\item[(1)] The image $\{[y + t \eta] \, \mid \, t \in \mathbb{R}\}$ is a closed curve in $\mathbb{T}^{2}$. In particular, it is a compact set and determines a one-dimensional smooth submanifold of $\mathbb{T}^{2}$, see Figure \ref{fig:rational-line}.
		\item[(2)] The image $\{[y + t \eta] \, \mid \,t \in \mathbb{R}\}$ is a dense subset of $\mathbb{T}^{2}$, see Figure \ref{F: E_N}. It is an immersed submanifold of $\mathbb{T}^{2}$, but not embedded, and it is homeomorphic to $\mathbb{R}$.
	\end{itemize} 
It is not hard to show that situation (1) occurs if and only if there is an integer $k \in \mathbb{Z}^{2}$ such that $\eta = \frac{k}{|k|}$. In that case, we say that $\eta$ is a \emph{rational direction}; otherwise, we say that $\eta$ is an \emph{irrational direction}. 

Evidently, the set of rational directions is dense in $S^{1}$. Thus, given an irrational direction $\eta$, it is possible to find a sequence of rational directions $(\eta_{N})_{N \in \mathbb{N}}$ such that $\eta_{N} \to \eta$ as $N \to \infty$. If $L_{N}$ denotes the length of the curve $\{[y + t \eta_{N}] \, \mid \, t \in \mathbb{R}\}$ (which is independent of $y$), then $L_{N} \to \infty$ as $N \to \infty$. At the same time, if $N$ is large, the segment $\{[y + t \eta] \, \mid \, 0 \leq t \leq T_{N}\}$ should remain close to its counterpart $\{[y + t \eta_{N}] \, \mid \, 0 \leq t \leq T_{N}\}$ for a large time $T_{N}$.

\begin{figure}

\centering

\resizebox{\textwidth}{!}{
\begin{tikzpicture}
	\draw (1.5,0) rectangle (1.5 + 5,5);
	\draw [dotted] (1.5, 0) -- (1.5 , -0.5);
	\draw [dotted] (1.5 + 5,0) -- (1.5 + 5, -0.5);
	\draw [|-|] ( 1.5, -0.5 ) -- node[below] { $ 1 $ } ( 1.5 + 5, -0.5);

	\foreach \a / \b / \c / \d in {    0.0020 /     4.5017 /     0.5980 /     4.9983,     0.6020 /     0.0017 /     4.9980 /     3.6650,     0.0020 /     3.6683 /     1.5980 /     4.9983,     1.6020 /     0.0017 /     4.9980 /     2.8317,     0.0020 /     2.8350 /     2.5980 /     4.9983,     2.6020 /     0.0017 /     4.9980 /     1.9983,     0.0020 /     2.0017 /     3.5980 /     4.9983,     3.6020 /     0.0017 /     4.9980 /     1.1650,     0.0020 /     1.1683 /     4.5980 /     4.9983,     4.6020 /     0.0017 /     4.9980 /     0.3317,     0.0020 /     0.3350 /     4.9980 /     4.4983}
		\draw ( 1.5 + \a, \b ) -- ( 1.5 + \c, \d );

	\draw[->] (-9,-1) -- (-9,5.5);		
	\draw[->] (-9-1,0) -- (-9+5.5,0);
	\foreach \r in {1, ..., 5 } {
		\draw (-9 + \r , -0.1) -- (-9 + \r , 0.1);	
		\draw (-9 - 0.1, \r) -- (-9 + 0.1, \r);
	}
	\draw [dotted] (-9+1, 0) -- (-9+1, -0.5);
	\draw [dotted] (-9+2, 0) -- (-9+2, -0.5);
	\draw [|-|] (-9+1,-0.5) -- node[below] { $ 1 $ } (-9+2,-0.5);

	\draw (1/4 - 9, - 7/24) -- (6 + 1/4 -9, 113/24 );
	\draw[dotted] (6 + 1/4 -9, 113/24 ) -- (7 - 9, 16/3);
	\draw[dotted] ( -1/2 - 9, - 11/12 ) -- ( 1/4 - 9, - 7 / 24 );
	
	\draw[->] (-2,2) -- node[above] { $ (x,y) \mapsto [ (x,y) ] $ } ( 0, 2 );
	
\end{tikzpicture}
}

\caption{Projection of the rational line $ \{ ( x, y ) ~|~ y = \frac{5}{6} x - \frac{1}{2} \} \subseteq \R^2 $ on the torus $ \mathbb{T}^2 $}

\label{fig:rational-line}

\end{figure}

\begin{figure}

\centering

\begin{tikzpicture}

	\draw (0,0) rectangle (5,5);
	\draw [dotted] (0, 0) -- (0, -0.5);
	\draw [dotted] (5,0) -- (5, -0.5);
	\draw [|-|] (0, -0.5 ) -- node[below] { $ 1 $ } (5, -0.5);

	\filldraw[gray]  (    2.0000,     2.5981) --  (    3.0000,     4.3981) --  (    2.2358,     4.8226) --  (    1.2358,     3.0226);
	
	\filldraw[gray] (    0.7000,     4.5263) --  (    0.9961,     4.9980) --  (    0.5961,     4.9980) --  (    0.4124,     4.7061);
	\filldraw[gray] (    0.9961,     0.0020) --  (    2.5000,     2.4063) --  (    2.2124,     2.5861) --  (    0.5961,     0.0020);

	\foreach \a / \b / \c / \d in{    0.0020 /     0.0035 /     2.8848 /     4.9965,     2.8888 /     0.0035 /     4.9980 /     3.6568,     0.0020 /     3.6637 /     0.7715 /     4.9965,     0.7755 /     0.0035 /     3.6583 /     4.9965,     3.6623 /     0.0035 /     4.9980 /     2.3170,     0.0020 /     2.3240 /     1.5450 /     4.9965}
		\draw ( \a, \b ) -- ( \c, \d );
		
	\foreach \a / \b / \c / \d in {1.5490 /  0.0035 / 2.4980 / 1.6472 }
		\draw[->] (\a, \b) -- (\c, \d);
		
\end{tikzpicture}

\caption{Projection of (parts of) the irrational line $ \{ ( x, y ) ~|~ y = \sqrt{3} x \} \subseteq \R^2 $ on $ \mathbb{T}^2 $ and a possible choice of boxes $ \{ E_n \}_{n \in \N } $}
\label{F: E_N}

\end{figure}

More precisely, if we choose a base point $y_{N} \in \R^{2} $ and thicken the line $\{[y_{N} + t \eta_{N}] \, \mid \, t \in \mathbb{R}\}$ to a strip $E_{N}$ as in Figure \ref{F: E_N}, then the segment $\{[y_{N} + t \eta] \, \mid \, 0 \leq t \leq T_{N}\}$ should remain inside $E_{N}$ for a time $T_{N}$ that grows to infinity as $N \to \infty$. If it is possible to choose the width of the strip $E_{N}$ to be small enough, then we could hope for the areas to be summable, that is,
	\begin{equation*}
		\sum_{n = 1}^{\infty} |E_{n}| < \infty,
	\end{equation*}
where $|E_{n}|$ is the Lebesgue measure of $E_{n}$ considered as a subset of $\mathbb{T}^{2}$. Now if $E = \bigcup_{n = 1}^{\infty} E_{n}$ and if $f : \mathbb{R} \to [1,2]$ is the function
	\begin{equation*}
		f(t) = 2 - \chi_{E}(t \eta),
	\end{equation*}
then, morally speaking, since the path $t \mapsto t \eta $ winds densely around the torus, we expect that $f = 1$ in an interval $J_{N} \subseteq \mathbb{R}$ of length approximately equal to $T_{N}$.

It only remains to quantify the argument, showing, in particular, that it is possible to choose the strips $\{E_{n}\}_{n \in \mathbb{N}}$ so that we simultaneously have that $T_{N} \to \infty$ and that $\sum_{n = 1}^{\infty} |E_{n}| < \infty$. 

\subsection{Liouville Numbers} To make the discussion above rigorous, we use the notion of a \emph{Liouville number}. Specifically, in what follows, fix a $\lambda \in (0,1) \setminus \mathbb{Q} $ for which there is a sequence $(p_{n})_{n \in \mathbb{N}}, (q_{n})_{n \in \mathbb{N}} \subseteq \mathbb{N}$ such that
	\begin{equation} \label{E: liouville}
		q_{n} > 1, \quad \text{gcd}(p_{n},q_{n}) = 1, \quad 0 < \left| \lambda - \frac{p_{n}}{q_{n}} \right| < q_{n}^{-n} \quad \text{for each} \quad n \in \mathbb{N}.
	\end{equation}
Such real numbers are referred to as Liouville numbers, and they form a dense subset of $(0,1)$ (see, e.g., \cite[Chapter 2]{oxtoby}).  In what follows, it will be useful to define $\lambda_{N} = \frac{p_{N}}{q_{N}}$.

In the end, we will define the unit vector $\eta$ by $\eta = (1 + \lambda^{2})^{-\frac{1}{2}} (1,\lambda)$ and the sequence of unit vectors $(\eta_{N})_{N \in \mathbb{N}}$ by $\eta_{N} = (1 + \lambda_{N}^{2})^{-\frac{1}{2}} (1,\lambda_{N})$. Notice that $(\eta_{N})_{N \in \mathbb{N}}$ is a sequence of rational directions approximating $\eta$, and since Liouville numbers are irrational, $\eta$ is an irrational direction. 
	
\begin{remark} \label{rmk:liouville-q-divergence} It follows from \eqref{E: liouville} that $q_{n} \to \infty$ as $n \to \infty$. \end{remark}
	
\subsection{Rational Cycles} In what follows, for a given pair of relatively prime integers $p,q$, let $e \in S^{1}$ be the unit vector perpendicular to $(p,q)$ given by 
	\begin{align*}
		e = \frac{ ( - p, q ) }{ \sqrt{ p^{2} + q^{2} } }
	\end{align*}
and define the smooth closed curve $C_{r}(p/q) \subseteq \mathbb{T}^{2}$ for $r \in \mathbb{R}$ by
	\begin{align*}
		C_{r}(p/q) = \{ [ r e + t(1,p/q) ] \, \mid \, t \in \mathbb{R} \}.
	\end{align*}
Since any point $x \in \mathbb{R}^{2}$ can be written in the form $x = r e + t(1,p/q)$ for some $t$, these sets cover $\mathbb{T}^{2}$:
	\begin{align*}
		\mathbb{T}^{2} = \bigcup_{r \in \mathbb{R}} C_{r}(p/q).
	\end{align*}
In order to make the sketch in Section \ref{S: idea of construction} above rigorous, we will be interested both in the length of $C_{r}(p/q)$ and in the period of the set function $r \mapsto C_{r}(p/q)$.

We begin by computing the length $L(p/q)$ of $C_{r}(p/q)$. 

	\begin{lemma} \label{L: length cycle} $L(p/q) = \sqrt{ p^{2} + q^{2 }}$. \end{lemma}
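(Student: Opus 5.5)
The plan is to parametrize the curve explicitly and find the smallest positive time at which the parametrization returns to its starting class in $\mathbb{T}^{2}$. The length of $C_{r}(p/q)$ is then that period multiplied by the speed $|(1,p/q)|$. By translation invariance of the torus, it suffices to take $r = 0$. The curve $C_{r}(p/q)$ is just a translate of $C_{0}(p/q)$ by $[re]$, and translations are isometries of $\mathbb{T}^{2}$.

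Consider $\phi(t) = [t(1,p/q)]$. We have $\phi(t_{1}) = \phi(t_{2})$ if and only if $(t_{1}-t_{2})(1,p/q) \in \mathbb{Z}^{2}$. This holds if and only if $s := t_{1}-t_{2} \in \mathbb{Z}$ and $sp/q \in \mathbb{Z}$. Since $\gcd(p,q) = 1$, the second condition means $q \mid s$. So the set of periods is exactly $q\mathbb{Z}$, and the minimal positive period is $T = q$.

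It follows that $\phi$ restricted to $[0,q)$ is injective. Moreover, $\phi$ is a local isometric immersion up to the constant speed factor, since it is the composition of a line in $\mathbb{R}^{2}$ with the local isometry $\mathbb{R}^{2} \to \mathbb{T}^{2}$. Hence $\phi$ descends to a smooth embedding of the circle $\mathbb{R}/q\mathbb{Z}$ onto $C_{0}(p/q)$. Its length is the integral of the speed over one period:
\begin{align*}
L(p/q) = \int_{0}^{q} |(1,p/q)| \, dt = q \sqrt{1 + p^{2}/q^{2}} = \sqrt{p^{2}+q^{2}}.
\end{align*}

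The only point requiring any care is the justification that one period traverses the curve exactly once. This is precisely the injectivity on $[0,q)$, which rests on coprimality. Without $\gcd(p,q) = 1$, the minimal period would be $q/\gcd(p,q)$ and the formula would change accordingly.
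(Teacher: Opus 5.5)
Your proposal is correct and follows essentially the paper's argument. You identify the minimal return time of $t \mapsto [t(1,p/q)]$ as $q$ via coprimality and multiply it by the speed $|(1,p/q)|$; your injectivity check on $[0,q)$ also makes rigorous a step the paper leaves implicit when it writes $L(p/q) = P(p/q)\sqrt{1+(p/q)^{2}}$.
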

	
		\begin{proof} Observe that $L(p/q)$ is given by
			\begin{equation*}
				L(p/q) = P(p/q) \sqrt{ 1 + ( p/ q)^{2} } , \quad \text{where} \quad P(p/q) =  \inf \left\{ T > 0 \, \mid \, (T,Tp/q) \in \mathbb{Z}^{2} \right\}.
			\end{equation*}
In particular, $P(p/q) \in \mathbb{N}$ and $P(p/q) \leq q$. We now argue that in fact $ P(p/q) = q$. Indeed, since
	\begin{equation*}
		\frac{P(p/q) p}{q} \in \mathbb{N}
	\end{equation*}
and $p$ and $q$ are relatively prime, it follows that $q$ divides $P(p/q)$. Thus, $ P(p/q) \in \{1,q\}$. Since $ \frac{ p }{ q } \notin \mathbb{N} $, $ P(p/q) \neq 1 $ by definition, so this proves $ P(p/q) = q $.  \end{proof}

Next, let $R(p/q)$ be the period of $r \mapsto C_{r}(p/q)$, that is,
	\begin{align*}
		R(p/q) = \min \left\{ R > 0 \, \mid \, C_{r + R}(p/q) = C_{r}(p/q) \, \, \text{for each} \, \, r \in \mathbb{R} \right\}.
	\end{align*}
To see that this is well-defined, observe that, for any $k \in \mathbb{Z}^{2}$ and any $r, t \in \mathbb{R}$,
	\begin{align*}
		( re + t(1,p/q) + k ) \cdot e = (r + k \cdot e ) \cdot e,
	\end{align*}
and, thus, since we work in $\mathbb{T}^{2}$, $C_{r + k \cdot e }(p/q) = C_{r}(p/q)$.  The next result shows that $R(p/q) = L(p/q)^{-1}$.

	\begin{lemma} \label{L: period cycle} $R(p/q) = \frac{1}{ L(p/q) } = \frac{ 1 }{ \sqrt{ p^{2} + q^{2} } }$. \end{lemma}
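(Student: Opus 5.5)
The plan is to compute the set of admissible shifts $\{R > 0 \mid C_{r+R}(p/q) = C_r(p/q) \ \forall r\}$ explicitly. It should be exactly the set of positive values of $k \cdot e$ with $k \in \mathbb{Z}^2$. The minimum of that set will then be read off using relative primality.

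\emph{Step 1.} First I characterize when two curves coincide. Since $x = re + t(1,p/q)$ and $(1,p/q)\cdot e = 0$, the coordinate $r = x\cdot e$ is well defined on $\mathbb{R}^2$. I claim that $C_{r'}(p/q) = C_r(p/q)$ if and only if $r' - r \in \{k\cdot e \mid k \in \mathbb{Z}^2\}$. The sufficiency was observed just before the statement. For necessity, suppose $C_{r'} = C_r$. Then $[r'e] \in C_r$, so $r'e = re + t(1,p/q) + k$ for some $t \in \mathbb{R}$ and $k\in\mathbb{Z}^2$. Dotting with $e$ gives $r' - r = k\cdot e$. Consequently the admissible periods are exactly the positive elements of the additive group $G = \{k\cdot e \mid k \in \mathbb{Z}^2\}$.

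\emph{Step 2.} Next I identify $G$. We have $k\cdot e = \frac{-k_1 p + k_2 q}{\sqrt{p^2+q^2}}$. Since $\gcd(p,q) = 1$, Bézout's identity shows that $\{-k_1 p + k_2 q \mid k\in\mathbb{Z}^2\} = \mathbb{Z}$. Hence $G = (p^2+q^2)^{-1/2}\mathbb{Z}$, and its smallest positive element is $(p^2+q^2)^{-1/2}$. In particular $R(p/q)$ is indeed a minimum.

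\emph{Step 3.} Finally, Lemma \ref{L: length cycle} gives $L(p/q) = \sqrt{p^2+q^2}$, which yields $R(p/q) = 1/L(p/q)$.

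The only delicate point is the necessity direction in Step 1, namely making sure that equality of the curves forces $r'-r$ to lie in $G$. Projecting a single point onto the $e$-coordinate handles this cleanly, so I expect no real obstacle. A sanity check is available: the curves $C_r$ for $r$ in one period should tile $\mathbb{T}^2$, and indeed $L \cdot R = 1 = |\mathbb{T}^2|$, which is consistent.
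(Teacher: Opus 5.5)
Your proof is correct, but it takes a different route from the paper. The paper argues by area. It takes the region $A=\{re+t(1,p/q) \mid r\in[0,R(p/q)),\ t\in[0,L(p/q))\}$, asserts that $y\mapsto[y]$ maps $A$ bijectively onto $\mathbb{T}^2$ with Jacobian one, and concludes $1=|A|=L(p/q)R(p/q)$. So the paper obtains $R$ from $L$ and Lemma \ref{L: length cycle}, rather than computing it directly. You instead determine the full set of periods, $\{k\cdot e \mid k\in\mathbb{Z}^2\}=(p^2+q^2)^{-1/2}\mathbb{Z}$, using B\'ezout. Lemma \ref{L: length cycle} then only serves to rewrite the answer as $1/L(p/q)$.

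Your version is more rigorous at exactly the points the paper glosses over. Your Step 1 (that $C_r=C_{r'}$ forces $r'-r\in G$) is what the paper's injectivity claim for $r\in[0,R(p/q))$ tacitly needs. Your Step 2 shows that the minimum defining $R(p/q)$ actually exists; the paper's remark before the lemma only exhibits some positive periods, not that they form a discrete set. The paper's area count is also only literally right after passing to an arclength parametrization. With the direction $(1,p/q)$ as written, the Jacobian of $(r,t)\mapsto re+t(1,p/q)$ is $\sqrt{p^2+q^2}/q$, and the curve closes at $t=q$ rather than $t=L(p/q)$; the two discrepancies cancel, but your argument needs no such care.

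The paper's approach, for its part, gives the conceptual reason for $L(p/q)R(p/q)=1$: the closed curves from one period tile the torus. That is exactly the consistency check you note at the end.
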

	
		\begin{proof} Consider the set $A \subseteq \mathbb{R}^{2}$ given by 
			\begin{align*}
				A = \{ r e + t (1,p/q) \, \mid \, r \in [0,R(p/q)), \, \, t \in [0,L(p/q)) \}.
			\end{align*}
		By the definition of $R(p/q)$ and $L(p/q)$, the projection $y \mapsto [y]$ sends $A$ bijectively onto $\mathbb{T}^{2}$.  Therefore, since the Jacobian equals one,
			\begin{align*}
				1 = | \mathbb{T}^{2} | = |A| = L(p/q) R(p/q).
			\end{align*}
		\end{proof}

\subsection{Construction of $f$} \label{S: construction quasiperiodic} We now prove Proposition \ref{P: quasiperiodic construction}.   Let $\{p_{n}\}_{n \in \mathbb{N}}$ and $\{q_{n}\}_{n \in \mathbb{N}}$ be integers satisfying \eqref{E: liouville} and define $\lambda_{N} = \frac{p_{N}}{q_{N}}$.   Let $L_{N} = L(\lambda_{N})$ and $R_{N} = R(\lambda_{N})$, so that, by Propositions \ref{L: length cycle} and \ref{L: period cycle},
	\begin{equation} \label{E: lengths}
		L_{N} = \sqrt{ p_{N}^{2} + q_{N}^{2} }, \quad R_{N} = \frac{ 1 }{ L_{N} }.
	\end{equation}
Since $\lambda \in (0,1)$ and $\lambda_{N} \to \lambda$ as $N \to \infty$, there is a constant $C > 0$ such that $p_{N} \leq C q_{N}$ for all $N$, hence
	\begin{equation*}
		(1 + C^{2})^{-\frac{1}{2}} q_{N}^{-1} \leq R_{N} \leq q_{N}^{-1}.
	\end{equation*}

For $N \in \mathbb{N}$, define $e_{N} \in S^{1}$ by 
	\begin{equation*}
		e_{N} = \frac{(-p_{N},q_{N})}{\sqrt{p_{N}^{2} + q_{N}^{2}}}.
	\end{equation*}
Observe that, for any $N$, we have
	\begin{equation*}
		(1,\lambda) \cdot e_{N} = R_{N} (-p_{N} + \lambda q_{N}) = R_{N} q_{N} (\lambda - \lambda_{N}).
	\end{equation*}
In particular,
	\begin{equation*}
		(1 + C^{2})^{-\frac{1}{2}} |\lambda - \lambda_{N}| \leq |(1,\lambda) \cdot e_{N}| \leq |\lambda - \lambda_{N}|.
	\end{equation*}

Finally, we construct a set $E \subseteq \mathbb{T}^{2}$ in the following way. To begin with, let $E_{1}$ be the set
	\begin{equation*}
		E_{1} = \left\{[r e_{1} + C_{0}(\lambda_{1})] \, \mid \, 0 \leq r \leq \frac{R_{1}}{3} \right\} = \bigcup_{r \in [0,\frac{R_{1}}{3}]} C_{r}(\lambda_{1}).
	\end{equation*}
Fix an arbitrary sequence $(m_{n})_{n \in \mathbb{N}} \subseteq \mathbb{N}$ such that $m_{N} \in \{0,\dots,3^{N} - 1\}$ for each $N \in \mathbb{N}$, and define $\{E_{n}\}_{n \in \mathbb{N}}$ via the rule
	\begin{equation*}
		E_{N} = \left\{[r e_{N} + C_{0}(\lambda_{N})] \, \mid \, \frac{m_{N} R_{N}}{3^{N}} \leq r \leq \frac{(m_{N} + 1) R_{N}}{3^{N}} \right\} = \bigcup_{3^{N} R_{N}^{-1} r \in [m_{N}, m_{N} + 1]} C_{r}(\lambda_{N}).
	\end{equation*}
Notice that, by definition and \eqref{E: lengths}, 	
	\begin{equation*}
		|E_{N}| = \frac{L_{N} \cdot R_{N}}{3^{N}} = 3^{-N}.
	\end{equation*}
Therefore,
	\begin{equation*}
		|E_{1} \cup \dots \cup E_{N}| \leq \sum_{n = 1}^{N} 3^{-n} = \frac{1}{2}.		\end{equation*}
In particular, if we set $E = \bigcup_{n = 1}^{\infty} E_{n}$, then $|E| \leq \frac{1}{2}$. 

In what follows, define two orthogonal directions $e, \eta \in S^{1}$ by 
	\begin{equation*}
		e = \frac{(\lambda, -1)}{\sqrt{1 + \lambda^{2}}}, \quad \eta = \frac{(1,\lambda)}{\sqrt{1 + \lambda^{2}}}
	\end{equation*}

	\begin{prop} \label{P: time}If $[y] \in E_{N}$ and if $T(y) \geq 0$ is defined by
		\begin{equation*}
			T(y) = \sup \left\{ t \geq 0 \, \mid \, [x + t \eta] \in E_{N} \right\},
		\end{equation*}
	then $T(y) \leq T_{N}$, where the sequence $\{T_{n}\}_{n \in \mathbb{N}}$ is given by
		\begin{equation*}
			T_{N} = (1 + \lambda^{2})^{\frac{1}{2}} 3^{-N} q_{N}^{-1} [\lambda - \lambda_{N}]^{-1}.
		\end{equation*}
	
	Moreover, there is a set $\tilde{E}_{N} \subseteq E_{N}$ such that $|\tilde{E}_{N}| = \frac{1}{2} |E_{N}|$ and, for each $[y] \in \tilde{E}_{N}$, we have
		\begin{equation*}
			T(y) \geq \frac{1}{2} T_{N}.
		\end{equation*}
	\end{prop}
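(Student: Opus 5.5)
The natural approach is to work in the coordinates adapted to the rational strip: write any lift of a point of $E_N$ as $y = r e_N + s(1,\lambda_N)$ (up to $\mathbb{Z}^2$), so that membership in $E_N$ depends only on the coordinate $r$ modulo $R_N$. By the description of $C_r(\lambda_N)$ and Lemma \ref{L: period cycle}, $[y] \in E_N$ if and only if $y \cdot e_N$ lies, modulo $R_N$, in the interval $[m_N R_N 3^{-N}, (m_N+1) R_N 3^{-N}]$, whose width is $R_N 3^{-N}$. (Here one uses that $k \cdot e_N \in R_N \mathbb{Z}$ for $k \in \mathbb{Z}^2$, so the condition is well defined on $\mathbb{T}^2$.)

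Next, compute how this coordinate evolves along the irrational flow: $(y + t\eta)\cdot e_N = y \cdot e_N + t\, \eta \cdot e_N$. From the identity $(1,\lambda)\cdot e_N = R_N q_N(\lambda - \lambda_N)$ established above, we get
\[
\eta \cdot e_N = (1+\lambda^2)^{-1/2} R_N q_N (\lambda - \lambda_N).
\]
Thus $t \mapsto (y+t\eta)\cdot e_N$ is affine with slope of absolute value $v_N = (1+\lambda^2)^{-1/2} R_N q_N |\lambda - \lambda_N|$, which is nonzero since $\lambda \neq \lambda_N$.

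For the upper bound, note that the trajectory stays in $E_N$ exactly while this affine function stays in the band of width $R_N 3^{-N}$. I would take $T(y)$ to mean the length of the maximal interval of times $[0,T]$ on which $[y+t\eta] \in E_N$ for all $t$; this is surely the intended meaning, since the supremum as literally written would be infinite by density. The time needed to cross the band is
\[
\frac{R_N 3^{-N}}{v_N} = (1+\lambda^2)^{1/2}\, 3^{-N} q_N^{-1} |\lambda-\lambda_N|^{-1} = T_N ,
\]
so $T(y) \le T_N$. One should also check that the band is not re-entered immediately after exit, but consecutive copies of the band are spaced by $R_N$, far more than its width, so only the first exit matters.

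For the lower bound, take $\tilde{E}_N$ to be the points whose $e_N$-coordinate lies in the half of the band "behind" the direction of motion, i.e.\ at distance at least $\tfrac12 R_N 3^{-N}$ from the exit edge (the edge toward which $\operatorname{sgn}(\lambda-\lambda_N)$ points). From any such point the trajectory needs time at least $\tfrac12 T_N$ to reach the exit edge, so $T(y) \ge \tfrac12 T_N$. Since $E_N$ is the image of a product set in the coordinates $(r,s)$ with Jacobian one, exactly as in the proof of Lemma \ref{L: period cycle}, this half-band has measure $\tfrac12 |E_N|$.

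The main obstacle is purely notational: fixing the intended definition of $T(y)$ (first exit time) and handling the sign of $\lambda-\lambda_N$. The computation itself is routine.
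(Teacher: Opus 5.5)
Your proposal is correct and follows the paper's argument. Like the paper, you track the affine coordinate $t \mapsto (y+t\eta)\cdot e_N$, whose slope is $(1+\lambda^2)^{-1/2}R_N q_N(\lambda-\lambda_N)$, across a band of width $R_N 3^{-N}$, and you take $\tilde{E}_N$ to be half of that band. Your two clarifications make explicit what the paper leaves implicit: reading $T(y)$ as a first exit time (the literal supremum is indeed infinite because the forward orbit is dense), and choosing the half of the band behind the direction of motion. The latter matters because the paper takes the lower half $r \in [m_N R_N 3^{-N}, (m_N+\tfrac12)R_N 3^{-N}]$, which only works when $\lambda > \lambda_N$.
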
 

By the estimate \eqref{E: liouville} and Remark \ref{rmk:liouville-q-divergence}, we know that, for any $M > 1$, 
	\begin{equation} \label{E: superexponential liouville}
		\lim_{N \to \infty} \frac{ T_{N} }{ M^{N} } = \infty.
	\end{equation}
	
		\begin{proof} Define $\tilde{E}_{N} \subseteq E_{N}$ by
			\begin{align*}
				\tilde{E}_{N} &= \left\{ [r e_{N} + C_{0}(\lambda_{N})] \, \mid \, \frac{m_{N} R_{N}}{3^{N}} \leq r \leq \left( m_{N} + \frac{1}{2} \right) \frac{R_{N}}{3^{N}} \right\} \\
				&= \bigcup_{3^{-N} R_{N}^{-1} r \in [m_{N}, m_{N} + 1/2]} C_{r}(\lambda_{N}).
			\end{align*}
		Suppose that $ y \in \R^2 $ is such that $ [ y ] \in \tilde E_N $. For any $t \in [0,\infty)$, we have
			\begin{equation*}
				(y + t \eta) \cdot e_{N} = y \cdot e_{N} + t (1 + \lambda^{2})^{-\frac{1}{2}} R_{N} q_{N} (\lambda - \lambda_{N}).
			\end{equation*}
		Thus, the inequality $\frac{m_{N} R_{N}}{3^{N}} \leq (y + t \eta) \cdot e_{N} \leq \frac{(m_{N} + 1) R_{N}}{3^{N}}$ holds provided
			\begin{equation*}
				0 \leq t (1 + \lambda^{2})^{-\frac{1}{2}} q_{N} (\lambda - \lambda_{N}) \leq \frac{1}{2} 3^{-N},
			\end{equation*}
		or, in other words,
			\begin{equation*}
				0 \leq t \leq \frac{1}{2} (1 + \lambda^{2})^{\frac{1}{2}} 3^{-N} q_{N}^{-1} (\lambda - \lambda_{N})^{-1} = \frac{1}{2} T_{N}.
			\end{equation*}
	This implies that if $[y] \in \tilde{E}_{N}$, then
			\begin{equation*}
				\left\{ [y + t \eta] \, \mid \, 0 \leq t \leq \frac{1}{2} T_{N} \right\} \subseteq E_{N}.
			\end{equation*}
	
	At the same time, notice that $|\tilde{E}_{N}|$ can be computed explicitly
			\begin{equation*}
				|\tilde{E}_{N}| = \frac{1}{2} |E_{N}| \leq \frac{1}{2} 3^{-N}.
			\end{equation*}
		
		Finally, in general, if $x \in E_{N}$, then the same computations show $T(x) \leq T_{N}$. 
		\end{proof}
		
	We conclude as follows: Define $F : \mathbb{T}^{2} \to \{1,2\}$ by 
		\begin{equation*}
			F([y]) = 2 - \chi_{E}([y]).
		\end{equation*}
	Note that $\int_{\mathbb{T}^{2}} F > 1$ since $|E| \leq \frac{1}{2}$.  Given $[y] \in \mathbb{T}^{2}$, let $f_{y} : \mathbb{R} \to \{1,2\}$ be the function
		\begin{equation*}
			f_{y}(s) = F([y + s\eta]).
		\end{equation*}
	
	We will show that, for almost every $[y] \in \mathbb{T}^{2}$, the function $f_{y}$ has the properties described at the start of this section.  The argument involves basic concepts from ergodic theory.  Define a group action $(\tau_{t})_{t \in \mathbb{R}}$ of $\mathbb{R}$ on $\mathbb{T}^{2}$ via the formula $\tau_{t}[y] = [y + t \eta]$.  Note that this action preserves Lebesgue measure.  Furthermore, since $\eta \in \mathbb{R}^{2} \setminus \mathbb{R} \mathbb{Z}^{2}$ (i.e., $\eta$ is an irrational direction), it is well-known that this action is ergodic; see, for instance, \cite[Section 2.4]{morfe_thesis} or \cite[Appendix B.1]{morfe_variational}.

	Since $F \in L^{\infty}(\mathbb{T}^{d})$, there is a sequence of trigonometric polynomials $(F_{n})_{n \in \mathbb{N}} \subseteq C^{\infty}(\mathbb{T}^{d})$ such that, for any $p \in [1,\infty)$,
		\begin{equation*}
			\lim_{n \to \infty} \int_{\mathbb{T}^{2}} |F - F_{n}|^{p} = 0.
		\end{equation*}
	By the ergodic theorem applied to $(\tau_{s})_{s \in \mathbb{R}}$, we have, for almost every $[y] \in \mathbb{T}^{2}$,
		\begin{equation} \label{E: ergodic thing}
			\lim_{R \uparrow \infty} \frac{1}{2R} \int_{-R}^{R} |f_{y}(t) - F_{n}([x + t \eta])|^{p} \, dt = \int_{\mathbb{T}^{2}} |F - F_{n}|^{p}.
		\end{equation}
	Thus, by definition, $f_{y} \in \bigcap_{1 \leq p < \infty} B^{p}(\mathbb{R})$ for almost every $y$.
	
	Applying Proposition \ref{P: time} in conjunction with the ergodic theorem, we prove that, for a typical $[y] \in \mathbb{T}^{2}$, the function $f_{y}$ undergoes arbitrarily long excursions from the mean $\int_{\mathbb{T}^{2}} F$. 
	
		\begin{prop} \label{P: quasiperiodic example done} For Lebesgue almost every $[y] \in \mathbb{T}^{2}$, the function $f_{y}$ has the following property: Given any $M \geq 1$, there is a $\tau \in \mathbb{R}$ such that 
			\begin{equation*}
				f_{y}(t) = 1 \quad \text{for each} \quad t \in [\tau,\tau + M].
			\end{equation*}
		Moreover, for almost every $[y]$,
			\begin{equation} \label{E: mean property}
				\lim_{R \uparrow \infty} \frac{1}{2R} \int_{t' -R}^{t' + R} f_{y}(t) \, dt = 2 - |\tilde{E}| \geq \frac{3}{2} \quad \text{for each} \quad t' \in \mathbb{R}.
			\end{equation}
		\end{prop}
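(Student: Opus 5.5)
The plan is to derive both properties from the ergodic theorem for the flow $\tau_t[y]=[y+t\eta]$, which is ergodic and preserves Lebesgue measure on $\mathbb{T}^2$, combined with Proposition \ref{P: time}.

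\textbf{Long excursions.} For each $N$ let $G_N=\tilde{E}_N$, so that $|G_N|=\frac12 3^{-N}>0$. By the ergodic theorem applied to $\chi_{G_N}$, for almost every $[y]$ we have
\[
\lim_{R\to\infty}\frac{1}{2R}\int_{-R}^R \chi_{G_N}(\tau_s[y])\,ds=|G_N|>0 .
\]
In particular, there is some $s_N$ with $\tau_{s_N}[y]\in\tilde E_N$. Intersecting these full-measure sets over the countably many $N$ gives a single full-measure set of $[y]$ on which such an $s_N$ exists for every $N$. Proposition \ref{P: time}, applied at the point $y+s_N\eta$, shows that $[y+(s_N+t)\eta]\in E_N\subseteq E$ for all $t\in[0,\frac12 T_N]$. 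Hence $f_y=2-\chi_E=1$ on $[s_N,s_N+\frac12T_N]$. Since $T_N\to\infty$ by \eqref{E: superexponential liouville}, for any $M\geq 1$ we may pick $N$ with $\frac12T_N\ge M$ and take $\tau=s_N$. The one subtlety is that Proposition \ref{P: time} is phrased with a sup over times in $E_N$, but its proof actually shows that the whole segment $\{[y+t\eta]\mid 0\le t\le \frac12 T_N\}$ stays in $E_N$; that containment is what we use.

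\textbf{Mean value.} Apply the ergodic theorem to $F=2-\chi_E\in L^\infty(\mathbb{T}^2)$. For a.e. $[y]$, $\frac{1}{2R}\int_{-R}^R f_y\,dt\to\int_{\mathbb{T}^2}F=2-|E|$. The statement writes $|\tilde E|$; I would interpret this as the measure of the union set, i.e. $|E|$, or else adjust notation accordingly. Since $|E|\le\frac12$, the limit is at least $\frac32$.

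To handle an arbitrary center $t'$, note that shifting the window by the fixed amount $t'$ changes the integral by at most $2\|F\|_\infty|t'|=4|t'|$. After dividing by $2R$ this error vanishes, so the limit holds for every $t'\in\mathbb{R}$ simultaneously on the same full-measure set. Alternatively, one can use the two-sided ergodic theorem directly.

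\textbf{Main obstacle.} The key step is making sure the countably many almost-sure statements combine into one full-measure set, and that the ergodic theorem is applied to the continuous-time flow rather than to a discrete map. Both points are routine: the index set $N\in\mathbb{N}$ is countable, and the Birkhoff theorem holds for measure-preserving $\mathbb{R}$-actions.
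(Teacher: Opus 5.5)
Your proposal is correct and follows the paper's proof. Ergodicity gives, for a.e.\ $[y]$ and every $N$, a time $s_N$ with $[y+s_N\eta]\in\tilde E_N$; the segment containment shown in the proof of Proposition~\ref{P: time}, together with $T_N\to\infty$, then yields the long intervals on which $f_y=1$, and the mean property is the ergodic theorem applied to $F=2-\chi_E$. Your reading of $|\tilde E|$ as $|E|$ (the value of $\int_{\mathbb{T}^2}F$) and your shift estimate for arbitrary centers $t'$ just make explicit points the paper leaves implicit.
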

		
Notice that Proposition \ref{P: quasiperiodic construction} follows.
		
			\begin{proof} Let $\tilde{E}^{\text{return}}_{n}$ be the set
				\begin{equation*}
					\tilde{E}^{\text{return}}_{n} = \{[y] \in \mathbb{T}^{2} \, \mid \, [y + s \eta] \in \tilde{E}_{n} \, \, \text{for some} \, \, s \in \mathbb{R}\},
				\end{equation*}
			where $\{\tilde{E}_{n}\}_{n \in \mathbb{N}}$ are the sets constructed in Proposition \ref{P: time}.  Since $|\tilde{E}_{n}| > 0$ and the group action $(\tau_{s})_{s \in \mathbb{R}}$ is ergodic, the ergodic theorem implies that $|\tilde{E}^{\text{return}}_{n}| = 1$. Thus, if $\tilde{E}^{\text{return}}$ is given by
				\begin{equation*}
					\tilde{E}^{\text{return}} = \bigcap_{n = 1}^{\infty} \tilde{E}^{\text{return}}_{n},
				\end{equation*}
			then $|\tilde{E}^{\text{return}}| = 1$. 
			
			Suppose that $y \in \mathbb{R}^{2}$ and $[y] \in \tilde{E}^{\text{return}}$. Fix an $N \in \mathbb{N}$. Since $[y] \in \tilde{E}^{\text{return}}_{N}$, there is an $r \in \mathbb{R}$ such that $[y + r \eta] \in \tilde{E}_{n}$. In particular, by Proposition \ref{P: time},
				\begin{equation*}
					[y + (r + t) \eta] \in E \quad \text{for each} \quad t \in [0,2^{-1}T_{N}].
				\end{equation*}
			In terms of $f_{y}$, this implies
				\begin{equation*}
					f_{y}(s) = 1 \quad \text{for each} \quad s \in [r, r + 2^{-1}T_{N}].
				\end{equation*}
			Since $T_{N} \to \infty$ as $N \to \infty$ by \eqref{E: superexponential liouville}, this concludes the proof. 
			
		Finally, \eqref{E: mean property} follows directly from \eqref{E: ergodic thing}. \end{proof}
			
		\begin{remark} \label{R: no uniformity} Notice that the, due to the long intervals where $f_{y}$ equals one, the limit in \eqref{E: ergodic thing} is \emph{not} uniform in the base point $t'$. Since $f_{y} \in \cap_{1 \leq p < \infty} B^{p}(\mathbb{R})$ for almost every $y$, this demonstrates what is already well-known in the literature, namely, that functions in $B^{p}(\mathbb{R}^{d})$ with $p < \infty$ do not necessarily possess the uniform averaging property of functions in $B^{\infty}(\mathbb{R}^{d})$, see e.g.~\cite[(13)]{part1}. \end{remark}
		
The previous result can be strengthened to establish the existence of the scale $\epsilon \mapsto \delta(\epsilon)$ invoked previously in the proof of Theorem \ref{T: almost periodic counterexample} above (see \eqref{E: condition we need later}). 

\begin{prop} \label{P: quasiperiodic example done done} There is a choice of scaling $\epsilon \mapsto \delta(\epsilon)$ such that $\epsilon^{-1} \delta(\epsilon) \to 0$ as $\epsilon \to 0$ and the following property holds for Lebesgue almost every $[y] \in \mathbb{T}^{2}$: For any $M \in \mathbb{N}$ and any $\varrho > 0$, there is an $T_{\epsilon} \in [-\varrho,\varrho]$ such that, for $\epsilon$ sufficiently small,
	\begin{equation*}
		f_{y}(\delta(\epsilon)^{-1} s) = 1 \quad \text{for each} \quad s \in [T - M \epsilon, T + M \epsilon] \subseteq [-\varrho, \varrho].
	\end{equation*}
\end{prop}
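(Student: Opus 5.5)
The plan is to exploit the quantitative information in Proposition \ref{P: time} together with a quantitative recurrence statement for the flow $(\tau_t)_{t\in\mathbb{R}}$, and then to choose $\delta(\epsilon)$ so slowly varying relative to $\epsilon$ that the macroscopic window $[-\varrho,\varrho]$, read at microscale $\delta$, contains a full visit to $\tilde E_N$ for an $N$ with $T_N\gg \epsilon/\delta$. Rescaled to unit microscale, the requirement is that within the window $[-\varrho\delta^{-1},\varrho\delta^{-1}]$ the orbit $s\mapsto [y+s\eta]$ enters some $\tilde E_N$ with $\tfrac12 T_N\ge 2M\epsilon\delta^{-1}$. Since $T_N$ grows superexponentially by \eqref{E: superexponential liouville}, there is ample room.

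\emph{Hitting times.} The first step is to control the waiting time to hit $\tilde E_N$. I would use the ergodic theorem: for a.e.\ $[y]$, $\frac1{2R}\int_{-R}^R \chi_{\tilde E_N}([y+s\eta])\,ds\to|\tilde E_N|=\tfrac12 3^{-N}$. This yields, for each $N$, a random finite radius $\mathcal{R}_N(y)$ such that $[y+s\eta]\in\tilde E_N$ for some $s\in[-\mathcal{R}_N,\mathcal{R}_N-\tfrac12T_N]$. Because $\mathcal{R}_N$ is not uniform in $y$, I would make it deterministic up to a null set. By Egorov's theorem, or directly by measurability, choose deterministic $\mathcal{R}_N$ with $|\{y:\text{no hit within }[-\mathcal{R}_N,\mathcal{R}_N-\tfrac12 T_N]\}|\le 2^{-N}$. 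Borel--Cantelli then shows that a.e.\ $y$ hits $\tilde E_N$ within $\mathcal{R}_N$ for all $N\ge N_*(y)$. Any such hit gives $f_y=1$ on an interval of length $\tfrac12 T_N$ by Proposition \ref{P: time}. Enlarging $\mathcal{R}_N$ freely, we may assume $\mathcal{R}_N\ge T_N$ and that $\mathcal{R}_N$ is increasing.

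\emph{Choice of scaling.} I would choose $\delta$ piecewise. Pick $\epsilon_N\downarrow0$, and for $\epsilon\in[\epsilon_{N+1},\epsilon_N)$ set $\delta(\epsilon)=\epsilon/\mu_N$, where $\mu_N\to\infty$ slowly, for instance $\mu_N=N$. The constraints to satisfy are:
\begin{itemize}
\item the rescaled mesoscale $2M\mu_N\le \tfrac12 T_N$, which holds for $N\ge N_0(M)$ since $T_N$ is superexponential;
\item the window contains the hit, i.e.\ $\mathcal{R}_N+T_N\le \varrho\delta^{-1}=\varrho\mu_N/\epsilon$.
\end{itemize}
The second constraint is arranged by taking $\epsilon_N\le \mu_N/(N\mathcal{R}_N)$, so that for any fixed $\varrho$ it holds once $N\ge 1/\varrho$ (up to a factor 2). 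Then $\epsilon^{-1}\delta=1/\mu_N\to0$.

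\emph{Placing $T_\epsilon$.} Given a hit at time $s_N$ with $[s_N,s_N+\tfrac12T_N]\subseteq[-\mathcal{R}_N,\mathcal{R}_N]$, set $T_\epsilon=\delta(s_N+\tfrac14T_N)$, the midpoint of that interval. Then $\delta^{-1}[T_\epsilon-M\epsilon,T_\epsilon+M\epsilon]$ is centered in the good interval with half-length $M\mu_N\le\tfrac14T_N$, and it lies in $[-\varrho\delta^{-1},\varrho\delta^{-1}]$.

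The main obstacle is the first step: the ergodic theorem only gives non-uniform-in-$y$ hitting times, while $\delta(\epsilon)$ must be deterministic and independent of $y$. The Egorov plus Borel--Cantelli device, which replaces random radii by deterministic $\mathcal{R}_N$ with summable exceptional measure, is exactly what resolves this. I would double-check that the events involved are measurable in $y$, which follows from the continuity of the flow and the fact that the $\tilde E_N$ are closed sets.
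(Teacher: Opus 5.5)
Your argument is correct, but it takes a different route from the paper's.

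\textbf{The paper's route.} The paper uses only the qualitative conclusion of Proposition \ref{P: quasiperiodic example done}, namely that $f_y$ has runs equal to $1$ of arbitrary length. It feeds this into the abstract Proposition \ref{P: one dimensional weird thing helper}, which covers any stationary ergodic field satisfying \eqref{E: minimization thing redux}. The argument there is:
\begin{enumerate}
\item control the \emph{arrival times} $\widehat{T}_j$ of runs of length $j$, choosing deterministic $\widehat{L}_j$ with $\mathbb{P}\{\widehat{T}_j \geq \widehat{L}_j\} \leq 2^{-j}$;
\item use Borel--Cantelli to fix an event $\Omega_0$ of probability at least $\frac12$ on which $\widehat{T}_j < \widehat{L}_j$ for all large $j$;
\item transfer $\Omega_0$ to almost every $\omega$ by the ergodic theorem, finding $\tau_{\widehat{a}_n}\omega \in \Omega_0$ with $|\widehat{a}_n| \leq 2^{n}$;
\item conclude for any scaling with $\delta(\epsilon)\,\mathcal{R}(\epsilon^{-1}\delta(\epsilon)) \to 0$.
\end{enumerate}

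\textbf{Your route.} You go back to the quantitative structure behind Proposition \ref{P: quasiperiodic example done}. A single visit to the positive-measure closed set $\tilde E_N$ already certifies a run of length $\frac12 T_N$ by Proposition \ref{P: time}. So you only need three things:
\begin{itemize}
\item Deterministic hitting radii $\mathcal{R}_N$. This is really continuity of measure for the decreasing family of non-hitting sets, whose intersection is null by ergodicity since $|\tilde E_N|>0$; Egorov is not needed.
\item A direct Borel--Cantelli argument in $y$. This yields the $y$-dependent threshold $N_*(y)$ that the statement allows, without the intermediate transfer of $\Omega_0$ via the ergodic theorem.
\item An explicit piecewise scaling. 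Here the superexponential growth \eqref{E: superexponential liouville} makes the constraint $4M\mu_N \le T_N$ trivial.
\end{itemize}

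\textbf{Comparison.} The paper's approach is more general: it applies verbatim to any stationary ergodic $\theta$ with arbitrarily long runs at its minimum, random or almost periodic, and it characterizes a whole class of admissible scalings. Yours gives a self-contained, explicit construction tailored to the torus example.
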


\begin{proof} This follows from Proposition \ref{P: one dimensional weird thing helper} in the appendix and Proposition \ref{P: quasiperiodic example done}. To see that Proposition \ref{P: one dimensional weird thing helper} applies, define the probability space $(\Omega,\mathcal{F},\mathbb{P})$ by letting $\Omega = \mathbb{T}^{2}$, $\mathcal{F}$ be the Borel $\sigma$-algebra, and $\mathbb{P}$ be the Lebesgue measure on $\mathbb{T}^{2}$.  The group of transformations $(\tau_{s})_{s \in \mathbb{R}}$ is exactly as above.  We then apply Proposition \ref{P: one dimensional weird thing helper} to the function 
		\begin{equation*}
			\theta(s,[y]) = F(\tau_{s}[y]),
		\end{equation*}
	which satisfies condition \eqref{E: minimization thing redux} by Proposition \ref{P: quasiperiodic example done}. \end{proof}

\section*{Appendix: Qualitative Approach to Rare Events in 1D} \label{A: one dimensional weird thing}

In this appendix, we assume that $\theta : \mathbb{R} \to [\theta_{*},\theta^{*}]$ is a stationary ergodic field such that for any $M \geq 1$, 
	\begin{equation} \label{E: minimization thing redux}
		\mathbb{P}\{\exists x \in \mathbb{R} \, \, \text{such that} \, \, \theta(y) = \theta_{*} \, \, \text{for a.e.} \, \, y \in [x, x + M)\} = 1.
	\end{equation}
The goal of this appendix is to prove that this property can be upgraded, as indicated in Proposition \ref{P: one dimensional weird thing helper} below, so that $\theta$ satisfies the assumption of Theorem \ref{T: general rare events} with probability one.

To make things precise, and so that the results can be applied in the almost periodic context of Section \ref{S: construction quasiperiodic}, we specifically let $(\Omega,\mathcal{F},\mathbb{P})$ be a probability space and assume there is a jointly measurable map $\tau : \mathbb{R} \times \Omega \to \Omega$, $(s,\omega) \mapsto \tau_{s} \omega$ satisfying $\tau_{0} \omega = \omega$, $\tau_{s + t} = \tau_{s} \circ \tau_{t}$, and that preserves $\mathbb{P}$ in the sense that
	\begin{align*}
		\mathbb{P} ( \tau_{s} A ) = \mathbb{P} ( A ) \quad \text{for each} \, \, s \in \mathbb{R}, \, \, A \in \mathcal{F},
	\end{align*}
and which is ergodic, meaning that if $A \in \mathcal{F}$ and $\tau_{s} A = A$ for each $s \in \mathbb{R}$, then $\mathbb{P}(A) \in \{0,1\}$.  We fix a random variable $\Theta : \Omega \to [\theta_{*},\theta^{*}]$ and define the random field $\theta$ by 
	\begin{align*}
		\theta(s) = \Theta ( \tau_{s} \omega ).
	\end{align*}
	
\begin{prop} \label{P: one dimensional weird thing helper} If $\theta$ satisfies \eqref{E: minimization thing redux}, then there is a function $\gamma \mapsto \mathcal{R}(\gamma)$ such that $\lim_{\gamma \downarrow 0} \gamma \mathcal{R}(\gamma) = \infty$ and with the following property: If $\epsilon \mapsto \delta(\epsilon)$ is any scaling such that
		\begin{equation*}
			\lim_{\epsilon \downarrow 0} \epsilon^{-1} \delta(\epsilon) = \lim_{\epsilon \downarrow 0} \delta(\epsilon) \mathcal{R}(\epsilon^{-1} \delta(\epsilon)) = 0,
		\end{equation*}
	then, for any (deterministic) $M \in \mathbb{N}$ and $\varrho > 0$, there is a random variable $\mathcal{E}(M,\varrho) \geq 0$ such that $\mathbb{P}\{\mathcal{E}(M,\varrho) > 0\} = 1$ with the following property: For any $\epsilon < \mathcal{E}(M,\varrho)$, there is a point $T_{\epsilon} \in [-\varrho,\varrho]$ such that
		\begin{align*}
			\theta(\delta(\epsilon)^{-1} s) = \theta_{*} \quad \text{for each} \quad s \in [T_{\epsilon} - M \epsilon , T_{\epsilon} + M \epsilon] \subseteq [-\varrho, \varrho].
		\end{align*}\end{prop}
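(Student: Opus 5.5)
The plan is to turn the qualitative statement \eqref{E: minimization thing redux} into a quantitative one: a scale $\mathcal{R}(\gamma)$ on which a run of length $\gamma^{-1}$ appears with high probability, after which Borel--Cantelli and rescaling finish. For $L \geq 1$ and $R > 0$, let $A(L,R)$ be the event that there is an $x$ with $[x - L, x + L] \subseteq [-R,R]$ and $\theta = \theta_{*}$ a.e.\ on $[x-L,x+L]$. For fixed $L$, \eqref{E: minimization thing redux} applied with $M = 2L$ gives that, almost surely, such an $x$ exists somewhere in $\mathbb{R}$. Hence the events $A(L,R)$ increase to a full-probability event as $R \uparrow \infty$. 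So for each $n \in \mathbb{N}$ we can choose $R_{n}$ so that $\mathbb{P}(A(n,R_{n})^{c}) \leq 2^{-n}$. We may take $R_{n}$ increasing with $R_{n} \geq n^{2}$.

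Next we need a run located near the origin at the correct scale. The definition of $A(n,R_n)$ already centers the window at $0$, so we define $\mathcal{R}(\gamma) := R_{\lceil M_{\gamma} \gamma^{-1}\rceil}$. To avoid dependence on $M$, we instead choose $L(\gamma) = \lceil \gamma^{-1} \log(1/\gamma)\rceil$, which grows faster than $M\gamma^{-1}$ for every fixed $M$, and set $\mathcal{R}(\gamma) := R_{L(\gamma)}$. Since $R_{n} \geq n^{2}$, we get $\gamma \mathcal{R}(\gamma) \geq \gamma L(\gamma)^{2} \to \infty$. By Borel--Cantelli, almost surely $A(n,R_{n})$ holds for all $n \geq n_{*}(\omega)$.

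To rescale, fix $M$ and $\varrho$, and put $\gamma = \epsilon^{-1}\delta(\epsilon) \to 0$ and $n = L(\gamma)$. For small $\epsilon$ we have $n \geq n_{*}$, so there is an $x$ with $[x-n,x+n] \subseteq [-R_{n},R_{n}]$ and $\theta = \theta_{*}$ there. Set $T_{\epsilon} = \delta x$. Then $|T_{\epsilon}| \leq \delta \mathcal{R}(\gamma) \to 0$, so the interval $[T_{\epsilon} - \delta n, T_{\epsilon} + \delta n]$ lies in $[-\varrho,\varrho]$ for small $\epsilon$, using $\delta n \leq 2\epsilon\log(1/\gamma)$. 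This last bound needs $\epsilon \log(1/\gamma) \to 0$, which holds if, for instance, $\delta(\epsilon)\mathcal{R}(\gamma) \to 0$ forces it; if not, we instead choose $L(\gamma)$ to grow only slightly faster than $\gamma^{-1}$. Since $\delta n \geq M\epsilon$ eventually, the stated inclusion holds, and $\theta(\delta^{-1}s) = \theta_{*}$ on $[T_{\epsilon} - M\epsilon, T_{\epsilon} + M\epsilon]$. Defining $\mathcal{E}(M,\varrho)$ as the threshold below which all of this holds gives a random variable that is positive almost surely. Measurability can be handled by taking rational $\epsilon$-thresholds, or by defining $\mathcal{E}$ as a supremum over a countable set.

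The main obstacle is the deterministic choice of $L(\gamma)$. It must satisfy $L(\gamma)\gamma \to \infty$, so that the run covers $M\epsilon$ for every $M$ simultaneously, while $\delta L(\gamma)$ remains small. The cleanest resolution is to make $\mathcal{E}$ depend on $M$ and, by monotonicity, simply require $n \geq M\gamma^{-1}$. Concretely, we take $\mathcal{R}(\gamma) := \sup_{k \leq \gamma^{-2}} R_{k}$-type bounds, namely $R_{\lceil \gamma^{-1}\rceil^{2}}$ or similar. Then $n = \lceil M\gamma^{-1}\rceil \leq \gamma^{-2}$ for small $\gamma$, and $R_{n} \leq \mathcal{R}(\gamma)$ by monotonicity of $R_n$. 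This gives $|T_\epsilon| \leq \delta\mathcal{R}(\gamma) \to 0$ and $\delta n \approx M\epsilon \to 0$, which settles the issue.
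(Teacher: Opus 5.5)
Your proof is correct. Its skeleton matches the paper's. Your $R_n$ with $\mathbb{P}(A(n,R_n)^c)\le 2^{-n}$ plays the role of the paper's $\widehat L_j$ with $\mathbb{P}\{\widehat T_j\ge \widehat L_j\}\le 2^{-j}$. In both, $\mathcal{R}$ is obtained by evaluating these scales at a microscopic run length $L(\gamma)$ with $\gamma L(\gamma)\to\infty$; in the paper this is $\widehat j_n$ with $\gamma_n\widehat j_n\ge n$.

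The difference is in the middle step.
\begin{itemize}
\item The paper fixes a deterministic $J$ with $\mathbb{P}(\Omega_0)\ge 1/2$. It then uses the ergodic theorem to find, at each dyadic scale $\gamma_n=2^{-n}$, a shifted base point $\widehat a_n\in[-\gamma_n^{-1},\gamma_n^{-1}]$ with $\tau_{\widehat a_n}\omega\in\Omega_0$, and only then locates a run near $\widehat a_n$. Its $\mathcal{R}$ is piecewise constant on dyadic blocks.
\item You apply Borel--Cantelli directly at the origin. This gives a random $n_*(\omega)<\infty$ with $A(n,R_n)$ for every $n\ge n_*$, which handles arbitrary values $n=L(\gamma(\epsilon))$ with no discretization of $\gamma$.
\end{itemize}
Your route shows that stationarity and ergodicity play no role in this proposition. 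Only \eqref{E: minimization thing redux} for each fixed length, continuity of measure, and the first Borel--Cantelli lemma are used. Your centered window $[x-n,x+n]\subseteq[-R_n,R_n]$ also gives the final containment in one line: the rescaled run $\{\delta t : t\in[x-n,x+n]\}$ lies in $[-\delta\mathcal{R}(\gamma),\delta\mathcal{R}(\gamma)]$. This replaces the paper's chain of inclusions in Step 3.

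The ``main obstacle'' you discuss at the end is not one. The event $A(n,R_n)$ forces $n\le R_n$, so $\delta L(\gamma)\le \delta\mathcal{R}(\gamma)\to 0$ automatically. In any case you only need $|T_\epsilon|+M\epsilon\le\varrho$ and $\gamma L(\gamma)\ge M$, not that the whole run fits. Hence your first choice, $L(\gamma)=\lceil\gamma^{-1}\log(1/\gamma)\rceil$ and $\mathcal{R}(\gamma)=R_{L(\gamma)}$, already works for all $M$ simultaneously. The alternative $\mathcal{R}(\gamma)=R_{\lceil\gamma^{-1}\rceil^{2}}$ with $n=\lceil M\gamma^{-1}\rceil$ is also valid. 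In a write-up, commit to one construction and drop the $M$-dependent false start and the conditional hedging in your third paragraph.
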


The proof will use the next observation, which is completely elementary. Before stating the result, let us define the arrival times $(\widehat{T}_{j})_{j \in \mathbb{N}}$ via the formula
	\begin{align*}
		\widehat{T}_{j} &= \inf \{ R \geq 0 \, \mid \, \exists x \in [-R,R] \, \, \text{such that} \, \, \theta(y) = \theta_{*} \, \, \text{for each} \, \, y \in [x, x + j) \}.
	\end{align*}

	\begin{lemma} \label{L: egoroff type lemma} If \eqref{E: minimization thing redux} holds, then $\mathbb{P}\{ \widehat{T}_{j} < \infty\} = 1$ for each $j \in \mathbb{N}$. In particular, there is a sequence $( \widehat{L}_{j})_{j \in \mathbb{N}}$ such that
		\begin{equation*}
			\lim_{j \to \infty} \widehat{L}_{j} = \infty, \quad \sum_{j = 1}^{\infty} \mathbb{P}\{T_{j} \geq \widehat{L}_{j}\} < \infty.
		\end{equation*} 
	\end{lemma}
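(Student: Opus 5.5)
The plan has two parts: show $\widehat{T}_{j}<\infty$ almost surely by monotonicity in the window length, then pick the thresholds $\widehat{L}_{j}$ by tightness of each single random variable. (The $T_{j}$ in the summability statement is read as $\widehat{T}_{j}$.)

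\textbf{Finiteness of $\widehat{T}_{j}$.} Fix $j\in\mathbb{N}$ and apply \eqref{E: minimization thing redux} with $M=j$. With probability one there is some $x\in\mathbb{R}$ such that $\theta=\theta_{*}$ a.e.\ on $[x,x+j)$.

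For any $R\geq |x|$ we have $x\in[-R,R]$, so $R$ belongs to the set defining $\widehat{T}_{j}$. Hence $\widehat{T}_{j}\leq |x|<\infty$ almost surely.

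Strictly, $\widehat{T}_{j}$ is defined with ``for each $y$'' and the hypothesis gives ``a.e.\ $y$''. For the piecewise constant fields used in the paper these agree up to endpoints, and in general one simply reads the definition of $\widehat{T}_{j}$ in the a.e.\ sense. Measurability of $\widehat{T}_{j}$ follows from joint measurability of $\tau$: the sets $\{\widehat{T}_{j}\le R\}$ can be described through countably many rational $x$ and averages $\int_{x}^{x+j}(\theta-\theta_{*})$. I treat this as routine.

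\textbf{Choice of $\widehat{L}_{j}$.} Since $\widehat{T}_{j}$ is an almost surely finite random variable, continuity of measure gives $\mathbb{P}\{\widehat{T}_{j}\geq L\}\to 0$ as $L\to\infty$.

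For each $j$, pick $\widehat{L}_{j}\geq j$ large enough that $\mathbb{P}\{\widehat{T}_{j}\geq \widehat{L}_{j}\}\leq 2^{-j}$. Then $\widehat{L}_{j}\to\infty$ and $\sum_{j}\mathbb{P}\{\widehat{T}_{j}\geq\widehat{L}_{j}\}\leq\sum_{j}2^{-j}<\infty$. One may additionally make $\widehat{L}_{j}$ nondecreasing by replacing it with $\max_{i\le j}\widehat{L}_{i}$; this only decreases the probabilities.

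\textbf{Main obstacle.} The only delicate point is the measurability and a.e.\ issue noted above; the rest is elementary. The lemma is meant to feed Borel--Cantelli in the proof of Proposition~\ref{P: one dimensional weird thing helper}: almost surely $\widehat{T}_{j}<\widehat{L}_{j}$ for all large $j$. One would then define $\mathcal{R}$ from the $\widehat{L}_{j}$, roughly $\mathcal{R}(\gamma)\approx \widehat{L}_{\lceil M/\gamma\rceil}$ suitably adjusted so that the rescaled interval fits in $[-\varrho,\varrho]$.
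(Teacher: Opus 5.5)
Your proof is correct and matches the paper's argument. Finiteness of $\widehat{T}_{j}$ follows directly from \eqref{E: minimization thing redux} with $M=j$, and $\widehat{L}_{j}\geq j$ is then chosen by continuity of measure so that $\mathbb{P}\{\widehat{T}_{j}\geq\widehat{L}_{j}\}\leq 2^{-j}$. Your remarks on reading $T_{j}$ as $\widehat{T}_{j}$, the a.e.\ interpretation, and measurability settle points the paper leaves implicit.
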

	
		\begin{proof} The criterion \eqref{E: minimization thing redux} clearly implies the first statement. To construct $( \widehat{L}_{j})_{j \in \mathbb{N}}$, recall that since $\lim_{ \ell \rightarrow \infty } \P\{ \widehat{T}_j < \ell \} = \mathbb{P}\{\widehat{T}_{j} < \infty\} = 1$, it is possible to choose a scale $ \widehat{L}_{j} \geq j$ such that $\mathbb{P}\{\widehat{T}_{j} \geq \widehat{L}_{j}\} \leq 2^{-j}$. \end{proof}
		
	By the Borel-Cantelli Lemma, we can fix a $J \in \mathbb{N}$ such that if $\Omega_{0} \in \mathcal{F}$ is the event
		\begin{equation} \label{E: xi event}
			\Omega_{0} = \{ \widehat{T}_{j} < \widehat{L}_{j} \, \, \text{for all} \, \, j \geq J\},
		\end{equation}
	then $\mathbb{P}(\Omega_{0}) \geq \frac{1}{2}$. 
	
With the sequence $(\widehat{T}_{j})_{j \in \mathbb{N}}$ and event $\Omega_{0}$ just defined, we now prove the proposition.

	\begin{proof}[Proof of Proposition \ref{P: one dimensional weird thing helper}] We first work with a specific sequence of $ \gamma $'s to make use of the above lemma. Later on, we pass to a continuum choice of scales.
	
	Let us write $ \gamma_{n} = 2^{-n} $. Our goal is to find a sequence $(\mathcal{R}_{n})_{n \in \mathbb{N}}$ such that 
		\begin{equation*}
			\lim_{n \to \infty} \gamma_{n} \mathcal{R}_{n} = \infty
		\end{equation*} 
	and the statement of the theorem holds provided we define $\gamma \mapsto \mathcal{R}(\gamma)$ by the rule:
		\begin{align} \label{E: definition of tau appendix}
	{ \rm Given }
	\quad \gamma_{ n + 1 } \leq \gamma < \gamma_{n} ,
	\quad { \rm define } \quad
	\mathcal{R} ( \gamma ) = \mathcal{R}_{ n }.
\end{align}
		

To prove this, we begin by defining the sequence $(\mathcal{R}_{n})_{n \in \mathbb{N}}$, choosing a suitable event of probability one, and then proving it has the desired properties.  

In the proof that follows, we use hats on quantities that live on the microscopic scale, that is, relating to properties of $\theta$.  Quantities without hats are on the mesoscopic scale, i.e., relating to the rescaled field $\theta(\gamma^{-1} \cdot)$.

\textit{Step 1 (Choice of $(\mathcal{R}_{n})_{n \in \mathbb{N}}$).} Let $( \widehat{L}_{j})_{j \in \mathbb{N}}$ be the sequence from Lemma \ref{L: egoroff type lemma}. Choose a sequence $( \widehat{j}_{n})_{n \in \mathbb{N}} \subseteq \mathbb{N}$ such that $\lim_{n \to \infty} \widehat{j}_{n} = \infty$ and $ j_n \coloneqq \gamma_{n} \widehat{j}_{n} \geq n$ for each $n \in \mathbb{N}$. Define the sequence $(\mathcal{R}_{n})_{n \in \mathbb{N}}$ by $ \mathcal{R}_{n} = 2 ( \widehat{L}_{ \widehat{j}_{n}} + \widehat{j}_{n} + 1 )$. By construction, $\gamma_{n} \mathcal{R}_{n} \to \infty$ as $n \to \infty$.

With this relabeling of the scales $( \widehat{L}_{j})_{j \in \mathbb{N}}$, it is convenient to also relabel the arrival times $( \widehat{T}_{j})_{j \in \mathbb{N}}$. In particular, define $( \widehat{S}_{n})_{n \in \mathbb{N}}$ by $ \widehat{S}_{n} = \widehat{T}_{\widehat{j}_{n}}$. By definition of $( \widehat{T}_{j} )_{j \in \mathbb{N}}$, the sequence $( \widehat{S}_{n})_{n \in \mathbb{N}}$ is also determined by the rule
	\begin{align} \label{E: arrival time thing}
		\widehat{S}_{n} &= \inf \left\{ R \geq 0 ~ \middle| ~  \exists x \in [-R,R] \, \, \text{such that} \, \, \theta(y) = \theta_{*} \, \,  \text{for a.e.} \, \, y \in [x, x + \widehat{j}_{n}) \right\}.
	\end{align}
	
\textit{Step 2 (Invoking the ergodic theorem).} Recall the event $\Omega_{0}$ defined in \eqref{E: xi event} above. Let $\tilde{\Omega}_{0}$ be the following event:
	\begin{equation*}
		\tilde{\Omega}_{0} = \left\{ \omega \in \Omega \, \mid \,  \lim_{r \to \infty} (2r)^{-1} |\{y \in [ - r, r] \, \mid \, \tau_{y} \omega \in \Omega_0 \}| = \mathbb{P}(\Omega_{0}) \right\}.
	\end{equation*}
The ergodic theorem implies that $\mathbb{P}(\tilde{\Omega}_{0}) = 1$.

By definition of the set $ \tilde\Omega_0 $ we learn that for every $ \omega \in \tilde\Omega_0 $ the following holds: There exists a random variable $ N_* = N_* ( \omega ) \in \mathbb{N}$ such that for every $ n \geq N_* $ there is a $ - \gamma_n^{-1} \leq \widehat{a}_n \leq \gamma_n^{-1} $ such that $ \tau_{ \widehat{a}_n } \omega \in \Omega_0 $. We define
\begin{align*}
a_n \coloneqq \gamma_n \widehat{a}_n
\quad { \rm so ~ that } \quad
| a_n | \leq 1,
\quad
\tau_{ \gamma_n^{-1} a_n } \omega \in \Omega_0.
\end{align*}
In particular, by the definition of $\Omega_{0}$, there is a deterministic $N_{0} \in \mathbb{N}$ such that 
	\begin{equation*}
		\widehat{S}_{n}(\tau_{\widehat{a}_{n}} \omega) < \widehat{L}_{\widehat{j}_{n}} \quad \text{for each} \quad n \geq \max\{ N_{0},N_{*}\} .
	\end{equation*}
This means it is possible to find a sequence $ ( \widehat{x}_n )_{ n \in \N } $ such that $ | \widehat{x}_n - \widehat{a}_{n} | < \widehat{L}_{ \widehat{j}_n } $ and $ \theta = \theta_* $ in $ [ \widehat{x}_n , \widehat{x}_n + \widehat{ j }_n ] $. Setting $ x_n \coloneqq \gamma_n \widehat{x}_n $ this implies $ | x_n - a_n | < \widehat{L}_{ \widehat{j}_n } \gamma_n $ and
\begin{align*}
	\theta(\gamma_n^{-1} y) = \theta_{*} \quad \text{for a.e.} \quad y \in [ x_{n}, x_{n} + j_{n}),
\end{align*}
where we recall that $ j_n = \gamma_{n} \widehat{j}_{n} \geq n $ by construction. 

\textit{Step 3 (Introducing the scaling $\gamma \mapsto R(\gamma)$).} Suppose as in \eqref{E: definition of tau appendix} that $\gamma > 0$ satisfies $\gamma_{n+1} \leq \gamma \leq \gamma_{n}$ for some $n \geq \max\{N_{0},N_{*}\}$. First, note that by the above construction (recall also $ \widehat{j}_n = \frac{j_n}{ \gamma _n } $), if $x_{n} > 0$, then
\begin{align*}
[ \gamma \gamma_n^{-1} x_n , \gamma \gamma_n^{-1}  x_n + \gamma \gamma_{n}^{-1} j_n ] 
& \subseteq [ 2^{-1} x_n , x_n + j_n ] \\
& \subseteq [ 2^{-1} a_n - 2^{-1} \widehat{L}_{ \widehat{j}_n } \gamma_n , a_n + \widehat{L}_{ \widehat{j}_n } \gamma_n + j_n ]  \\
& \subseteq [ - ( \widehat{L}_{ \widehat{j}_n } + \widehat{j}_n + \gamma_n^{-1} ) \gamma_n , ( \widehat{L}_{ \widehat{j}_n } + \widehat{j}_n + \gamma_n^{-1} ) \gamma_n ] \\
& \subseteq [ - 2 ( \widehat{L}_{ \widehat{j}_n } + \widehat{j}_n + \gamma_n^{-1} ) \gamma , 2 ( \widehat{L}_{ \widehat{j}_n } + \widehat{j}_n + \gamma_n^{-1} ) \gamma ] .
\end{align*}
Similarly, if $x_{n} < 0$, a similar chain of inclusions leads to the same deduction:
	\begin{equation*}
		[ \gamma \gamma_n^{-1} x_n , \gamma \gamma_n^{-1}  x_n + \gamma \gamma_{n}^{-1} j_n ] \subseteq  [ - 2 ( \widehat{L}_{ \widehat{j}_n } + \widehat{j}_n + \gamma_n^{-1} ) \gamma , 2 ( \widehat{L}_{ \widehat{j}_n } + \widehat{j}_n + \gamma_n^{-1} ) \gamma ].
	\end{equation*}

Let $\mathcal{R}(\gamma)$ be defined by \eqref{E: definition of tau appendix} so that, in particular, $ \mathcal{R}(\gamma) = 2 ( \widehat{L}_{ \widehat{j}_n } + \widehat{j}_n + 1 ) $. The result of the previous paragraph implies that, for each $n \geq \max\{N_{0}, N_{*}(\omega)\}$,
	\begin{equation} \label{E: minimum inclusion}
		[  x , x + j ] 
		\subseteq [ - \mathcal{R}(\gamma) , \mathcal{R}(\gamma) ], 
		\quad { \rm where } \quad 
		x \coloneqq \gamma \gamma_n^{-1} x_n, \, \, j \coloneqq \gamma \gamma_{n}^{-1} j_n ,
	\end{equation}
and, further,
	\begin{align} \label{E: long intervals}
	\theta(\gamma^{-1} y) = \theta_{*} \quad \text{for a.e.} \quad y \in [ x, x + j) .
\end{align}

Suppose that $R \mapsto \gamma(R) $ is any scaling for which there holds
	\begin{equation} \label{E: another weird scaling condition}
		\lim_{R \uparrow \infty} \gamma(R) = \lim_{R \uparrow \infty} \frac{ \gamma(R) \mathcal{R}(\gamma(R)) }{ R } = 0.
	\end{equation}
Notice that \eqref{E: minimum inclusion} and \eqref{E: long intervals} imply that there is a random scale $R_{*}(\omega) \geq 0$, which is finite almost surely, such that if $R \geq R_{*}(\omega)$, then, with probability one, there is an $x(R) \in \mathbb{R}$ and $j(R) \geq 2^{-1} | \log_{2} \gamma(R) | $ such that 
	\begin{align*}
		\theta(\gamma(R)^{-1} y ) = \theta_{*} \quad \text{for a.e.} \, \, y \in [ x(R), x(R) + j(R) ) \subseteq [-R \varrho, R \varrho ] .
	\end{align*}
In particular, $j(R) \to \infty$ as $R \to \infty$.

\textit{Step 4 (Restatement in terms of $\delta(\epsilon)$).}  Finally, assume, as in the statement of the theorem, that $\epsilon \mapsto \delta(\epsilon)$ is any scaling such that
	\begin{equation*}
		\lim_{\epsilon \downarrow 0} \epsilon^{-1} \delta(\epsilon)
		= \lim_{\epsilon \downarrow 0} \delta(\epsilon) \mathcal{R}(\epsilon^{-1} \delta(\epsilon)) = 0.
	\end{equation*}
Having in mind the change-of-variables $R = \epsilon^{-1}$, define $\gamma(\epsilon) = \epsilon^{-1} \delta(\epsilon)$.  In what follows, we abuse notation by also writing $\gamma(R)$.

Observe that, with the above change of variables, our choice of $\epsilon \mapsto \delta(\epsilon)$ yields
	\begin{align*}
		\lim_{ R \uparrow \infty } \frac{ \gamma(R) \mathcal{R}(\gamma(R)) }{ R } = \lim_{ \epsilon \downarrow 0 } \epsilon \gamma( \epsilon ) \mathcal{R} ( \gamma ( \epsilon ) ) = \lim_{ \epsilon \downarrow 0 } \delta( \epsilon ) \mathcal{R} ( \epsilon^{-1} \delta ( \epsilon ) ) = 0
	\end{align*}
and similarly $\gamma(R) \to 0$ as $R \to \infty$.  In particular, the scaling $R \mapsto \gamma(R)$ satisfies \eqref{E: another weird scaling condition}.  Therefore, from the previous step of the proof (in particular, \eqref{E: long intervals}), for any deterministic $M \in \mathbb{N}$ and $\varrho > 0$, there is a random scale $\mathcal{E}(M,\varrho) \geq 0$, which is positive almost surely, such that, for any $\epsilon < \mathcal{E}(M,\varrho)$, there is a point $T_{\epsilon} \in [-\varrho,\varrho]$ for which we know that
	\begin{align} \label{E: long intervals redux}
		\theta(\delta(\epsilon)^{-1} x) = \theta_{*} \quad \text{for each} \quad x \in [T_{\epsilon} - M \epsilon, T_{\epsilon} + M \epsilon] \subseteq [-\varrho,\varrho].
	\end{align}\end{proof}

\bibliographystyle{plain}
\bibliography{bibliography}

\end{document}